\documentclass[12pt]{amsart}

\usepackage[dvipsnames]{xcolor}
\usepackage[margin=1in]{geometry}
\usepackage{amsmath}
\usepackage{amsfonts}
\usepackage{amssymb}
\usepackage[hidelinks]{hyperref}
\usepackage{graphicx}
\usepackage[inline]{enumitem}
\usepackage{ulem}

\usepackage[backend=biber,style=alphabetic]{biblatex}
\usepackage{amsthm}
\theoremstyle{plain}
\newtheorem{thm}{Theorem}[section]
\newtheorem{lem}[thm]{Lemma}
\newtheorem{prop}[thm]{Proposition}
\newtheorem{cor}[thm]{Corollary}

\theoremstyle{definition}
\newtheorem{defn}[thm]{Definition}
\newtheorem{conj}[thm]{Conjecture}
\newtheorem{exmp}[thm]{Example}
\newtheorem{question}[thm]{Question}

\usepackage{tikz}
\tikzset{
    active_graph_node/.style = {circle, draw=black, fill=black, inner sep = 1.5pt, text=white, minimum size=20pt},
    dormant_graph_node/.style = {circle, draw=black, fill=white, inner sep = 1.5pt, text=black, minimum size=20pt},
    grayfill/.style={gray!20,draw=black}
}

\newcommand{\abs}[1]{\left|#1\right|}
\newcommand{\Conf}{\mathrm{Conf}}
\newcommand{\DConf}{\mathrm{Conf}^{\,\square}}
\newcommand{\UConf}{\mathrm{UConf}}
\newcommand{\DUConf}{\mathrm{UConf}^{\,\square}}

\title{Locally Euclidean Discretized Graph Configuration Spaces}

\date{}

\author{Justin Murray}
\address{Justin Murray, Department of Mathematical Sciences, Ball State University}
\email{justin.murray@bsu.edu}

\author{Zach Wilcher}
\address{Zach Wilcher, Department of Mathematics, University of Florida}
\email{z.wilcher@ufl.edu}

\begin{document}
    \begin{abstract}
        We investigate discretized graph configuration spaces as defined by Aaron Abrams in his PhD thesis in 2000. Specifically, we extend Abrams' work by completely classifying which ones admitted by simple graphs are homeomorphic to closed manifolds. In the process of doing so, we develop techniques to translate topological properties of these spaces into graph theoretic ones.
    \end{abstract}
    \maketitle
    \tableofcontents
    \section{Introduction}\label{section:introduction}
Graph configuration spaces are excellent models for numerous real-world applications,
including safe robot pathing \cite{ghristSafeCooperativeRobot2002},
digital microfluidic devices \cite{GHRIST2007302}, and topological quantum computers \cite{Maciazek2019}; their study has been a growing area of research in geometric settings since \cite{abrams2000configurationspaces}.

A key feature of Abrams' model is that \(\DConf_n(\Gamma)\) is homotopy equivalent to \(\Conf_n(\Gamma)\) when \(\Gamma\) is suitably subdivided (see Theorem 3.2 of \cite{PRUE2014136}). So, by studying \(\DConf_n(\Gamma)\) we can sometimes determine homotopy invariants of \(\Conf_n(\Gamma)\)
such as the graph braid group \(B_n(\Gamma)\). One property of the graph braid group is the possibility of being a manifold group. In \cite{jainManifoldModelsHyperbolic2026} it is shown that \(B_3(\Theta_5)\) is a \(3\)-manifold group. By classifying exactly which discretized graph configuration spaces are manifolds, we partially determine which graph braid groups are manifold groups; any other examples of manifold groups must be determined through other means, e.g.\ thickening.

The goal of this paper is to resolve the following question of Abrams.

\begin{question}[Question 5.1 in \cite{abrams2000configurationspaces}]
    \label{qst:graph-manifolds}
    For which simple graphs \(\Gamma\) without loops and which positive integers \(n\) and \(m\) is the discretized \(n\)-point (ordered) configuration space, denoted \(\DConf_n(\Gamma)\), homeomorphic to a closed \(m\)-manifold without boundary?
\end{question}

Abrams had made the most headway towards answering the general question in \cite{abrams2000configurationspaces}. He classifies all such examples when the dimension of the manifold is maximal. That is, when the number of points in the configuration space equals the dimension.
\begin{thm}[Corollary 5.8 in \cite{abrams2000configurationspaces}]
    If \(\Gamma\) is a simple graph without loops, then \(\DConf_n(\Gamma)\) is homeomorphic to an \(n\)-manifold without boundary
    if and only if \(n = 1\)
    and \(\Gamma\) is a cycle
    or
    \(n = 2\) and \(\Gamma\) is isomorphic to \(K_5\) or \(K_{3,3}\).
\end{thm}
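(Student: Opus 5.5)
We prove both directions by computing the link of every vertex of the cube complex \(\DConf_n(\Gamma)\). Recall that a cell of \(\DConf_n(\Gamma)\) is a product \(c_1\times\cdots\times c_n\) of vertices and edges of \(\Gamma\) whose closures are pairwise disjoint. The link of a vertex \((v_1,\dots,v_n)\) is therefore a simplicial complex. Its vertices are pairs \((i,e)\), where \(e\) is an edge at \(v_i\) whose other endpoint is not any \(v_j\). A set of such pairs spans a simplex exactly when the indices are distinct and the chosen edges have pairwise disjoint closures (together with the remaining \(v_j\)). A cube complex is a topological \(n\)-manifold without boundary exactly when it is pure \(n\)-dimensional and the link of every vertex is homeomorphic to \(S^{n-1}\). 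At top dimension, purity means every vertex must lie in an \(n\)-cube, so each \(v_i\) needs an edge leading off the configuration and these edges must be pairwise disjoint.

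\emph{Sufficiency.} For \(n=1\), \(\DConf_1(\Gamma)=\Gamma\) is a circle when \(\Gamma\) is a cycle. For \(n=2\) and \(\Gamma=K_5\), take the vertex \((a,b)\). Point \(1\) can move along edges \(a\to x\) with \(x\in\{c,d,e\}\), and point \(2\) along \(b\to y\) with \(y\in\{c,d,e\}\). The pair spans an edge of the link iff \(x\neq y\). So the link is the bipartite graph \(K_{3,3}\) minus a perfect matching, which is a \(6\)-cycle, i.e.\ a circle. For \(\Gamma=K_{3,3}\), take \((a,b)\). If \(a,b\) lie on the same side, each has \(3\) available neighbours on the other side, and again the link is \(K_{3,3}\) minus a matching, a hexagon. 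If \(a\sim b\), each has \(2\) available neighbours and \(x\neq y\) holds automatically since they lie on opposite sides. The link is then \(K_{2,2}\), a \(4\)-cycle. In every case the link is a circle, so the space is a surface.

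\emph{Necessity.} For \(n=1\), \(\Gamma\) is a closed \(1\)-manifold iff it is a disjoint union of cycles. Presumably "cycle" in the statement allows this, or connectivity is assumed.

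For \(n\ge 2\), fix an edge \(uw\) and a configuration \(p\) in which point \(1\) sits at \(u\) and the others sit at \(v_2,\dots,v_n\). The link of \(p\) must be \(S^{n-1}\), hence a pseudomanifold, so every \((n-2)\)-simplex lies in exactly two \((n-1)\)-simplices. Apply this to the simplex in which points \(2,\dots,n\) move along chosen disjoint edges. The condition says that point \(1\) has exactly two admissible edges at \(u\) avoiding the closures of those edges. The plan is to vary the positions and moving edges of the other points to extract strong constraints on \(\Gamma\):
\begin{itemize}
\item every vertex degree is determined by how many neighbours can be blocked;
\item each vertex has degree exactly \(n+1\), since the other \(n-1\) points can occupy or block \(n-1\) neighbours and exactly two must remain;
\item any two vertices have the same common-neighbour structure.
\end{itemize}

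For \(n=2\) this forces a \(3\)-regular configuration count on each neighbourhood, \(\deg(u)-|N(u)\cap\overline{e}|=2\) for every edge \(e\) disjoint from \(u\). Equivalently, for all such \(e=xy\), \(|N(u)\cap\{x,y\}|\) is constant. This means every vertex has degree \(4\) with every disjoint edge having both endpoints adjacent to \(u\), which gives \(K_5\). Alternatively, every vertex has degree \(3\) with every disjoint edge meeting \(N(u)\) exactly once, which gives \(K_{3,3}\). A short case analysis identifies these graphs. The codimension-0 condition (links are connected circles, not unions of circles) must also be checked, but this holds once the graph is identified.

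For \(n\ge3\) we aim for a contradiction: the constancy conditions at the different subsets of blocked neighbours are incompatible with a finite simple graph. Alternatively we use an Euler-characteristic or homology argument on a link showing it cannot be \(S^{n-1}\).

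We expect this \(n\ge 3\) exclusion, and the combinatorial identification of \(K_5,K_{3,3}\), to be the main obstacle. Our first attempt is to induct by observing that the link at \(p\) contains, as the link of a simplex, the link of a vertex in \(\DConf_{n-1}(\Gamma\setminus \overline{\text{star}})\). This reduces to the \(n=2\) classification applied to graphs obtained by deleting closed neighbourhoods, where regularity counts then clash.
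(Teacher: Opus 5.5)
Your sufficiency computations (the hexagon and square links) are correct, and so is your remark that for \(n=1\) one really gets disjoint unions of cycles. The genuine gap is necessity for \(n\ge 3\), and the strategy you lead with cannot close it.

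The condition you extract says: for every \((n-1)\)-matching \(M\) carried by points \(2,\dots,n\) and every \(u\notin V(M)\), the vertex \(u\) has exactly two neighbours in \(\Gamma-V(M)\). This is precisely the statement that \(\DConf_n(\Gamma)\) is a closed \(n\)-pseudomanifold (Lemma~\ref{lem:special-edges} with \(m=n\)). It is satisfied for every \(n\) by \(K_{2n+1}\), where each moving edge blocks two neighbours of \(u\) (\(2n-2(n-1)=2\)). It is also satisfied by \(K_{n+1,n+1}\), where each moving edge blocks one (\((n+1)-(n-1)=2\)); see Theorem~\ref{thm:m-pseudomanifolds}. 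Hence no constancy or regularity count can give a contradiction. Nothing clashes in your induction either: the requirement \(\Gamma-\{u,w\}\in\{K_5,K_{3,3}\}\) for every edge \(uw\) is realized by \(K_7\) and \(K_{4,4}\). Your bullet ``each vertex has degree exactly \(n+1\)'' fails for the same reason, since a moving point may block both endpoints of its edge, as already happens in \(K_5\).

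Two things are missing. First, the identification \(\Gamma\cong K_{2n+1}\) or \(K_{n+1,n+1}\); the paper gets this from Lemma~\ref{lem:manifold-with-only-cycles} and the reconstruction Lemmas~\ref{lem:K_r-is-special} and~\ref{lem:K_p,q-is-special}. Second, a test beyond codimension one. Following Abrams (Proposition~\ref{prop:abrams-non-manifolds}), take an \((n-3)\)-matching \(M\). The link of the corresponding \((n-3)\)-cube is the vertex link of three points on \(K_{2n+1}-V(M)\cong K_7\), or on one side of \(K_{n+1,n+1}-V(M)\cong K_{4,4}\). This link has \(12\) vertices, \(36\) edges and \(24\) triangles, so \(\chi=0\) and it is not a \(2\)-sphere.

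Your induction can be completed along these lines, giving a route different from the paper's general-\(m\) matching analysis. Let \(\Gamma'=\Gamma-V(M)\). The link of the cube in which points \(4,\dots,n\) move along \(M\) is a vertex link of \(\DConf_3(\Gamma')\). By local homology it is a homology \(2\)-sphere whose vertex links are circles, hence \(S^2\), so \(\DConf_3(\Gamma')\) is a closed \(3\)-manifold. Similarly, the link of a \(1\)-cube along an edge \(uw\) of \(\Gamma'\) is a vertex link of \(\DConf_2(\Gamma'-\{u,w\})\); note that only \(u,w\) are deleted, not a closed star. So these spaces are closed surfaces, and \(\Gamma'-\{u,w\}\in\{K_5,K_{3,3}\}\) for every edge \(uw\). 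To finish, you still need the \(m=1\) reconstruction \(\Gamma'\cong K_7\) or \(K_{4,4}\), followed by the \(\chi=0\) computation above.

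Two smaller points:
\begin{enumerate}
\item State the manifold criterion via local homology: links are homology spheres, and each codimension-one cube lies in exactly two top cubes. ``Vertex links homeomorphic to \(S^{n-1}\)'' does not characterize topological manifolds in high dimensions.
\item In the \(n=2\) case analysis you must also exclude \(|N(u)\cap\{x,y\}|=0\), and a constant that depends on \(u\). Both are quick. A degree-two vertex would force a neighbour of degree one, which itself violates the condition. A single vertex with constant \(2\) already forces \(\Gamma\cong K_5\). Your sketch omits both.
\end{enumerate}
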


By establishing a duality result, Abrams recovers other examples of manifolds in \cite{abrams2000configurationspaces} such as 
\(\DConf_2(K_{1,3})\), \(\DConf_3(K_5)\), and \(\DConf_4(K_{3,3})\). Our interest in this question started when \(\DConf_3(\Theta_4)\) was shown to be closed orientable surface in \cite{appiah2024algebraicstructurehyperbolicgraph}. This had been done before (see Example 4.3 of \cite{ko2012characteristics}). However, while searching for other examples, we found that such examples are rare.

Interested in generalizing the notion of when \(\DConf_n(\Gamma)\) is homeomorphic to a manifold, Fernandes investigated when \(\DConf_2(\Gamma)\) is a \(2\)-\textit{pseudomanifold} in \cite{fernandesTopologyGraphConfiguration}.

\begin{defn}[Definition 5.1 in \cite{fernandesTopologyGraphConfiguration}]
    \label{defn:fernandes-pseudomanifold}
    A finite \(m\)-dimensional cubical complex \(X\) is a closed \(m\)-pseudomanifold if and only if
    every \((m-1)\)-cell in \(X\) is a face of exactly two \(m\)-dimensional cells.
\end{defn}

Fernandes determined that \(\DConf_2(\Gamma)\) is a closed \(2\)-pseudomanifold if and only if \(\Gamma\) is \(K_5\), \(K_{3,3}\), \(K_4\) with each edge doubled, or \(K_{2,2}\) with each edge doubled. We approach question \ref{qst:graph-manifolds} in a way similar to how Fernandes approached \(\DConf_2(\Gamma)\) by determining graph invariants necessary for \(\DConf_n(\Gamma)\) to be a \(m\)-pseudomanifold. Unlike Fernandes, the graph invariants we determine involve the language of graph \textit{matchings} (see Definition \ref{defn:matching-set}), which generalize naturally for increasing values of \(m\) and \(n\). With this approach, we completely classify when \(\DConf_n(\Gamma)\) is an \(m\)-pseudomanifold and \(\Gamma\) is a simple graph in Theorem \ref{thm:m-pseudomanifolds} below. Note that this theorem contains Fernandes' result (for simple graphs) as a special case. 

\begin{thm}
    \label{thm:m-pseudomanifolds}
    
    %  \(\DConf_n(\Gamma)\) is a closed \(m\)-pseudomanifold without boundary if and only if \(\DConf_n(\Gamma)\) is one of the following. 
    %  
    %\begin{enumerate}
    %   \item \(\DConf_n(\bigsqcup_{k \in I} C_k)\) where \(n \in \{1, \abs{V(\Gamma)} - 1\}\) and \(I\) is any finite multiset of integers each \(\ge 3\) \zw{this is a 1-pseudomanifold. As written it sounds like we are saying this is an \(m\)-pseudomanfold for \(m > 1\)}
    %   \item \(\DConf_n(K_{2m+1})\) where \(n \in \{m, m+1\}\).
    %   \item \(\DConf_n(K_{m+1, m+1})\) where \(n \in \{m, m+2\}\)
    %   \item \(\DConf_{m+1}(K_p \sqcup K_q)\) for some positive odd integers \(p\) and \(q\) satisfying \(p + q = 2m + 2\).
    %   \item \(\DConf_{m+1}(K_{m, m+2})\)
    %\end{enumerate}
    
    \(\DConf_n(\Gamma)\) is a closed \(m\)-pseudomanifold if and only if at least one of the following is true.

    \begin{enumerate}
        \item \(\Gamma\) is a finite disjoint union of cycles, \(n \in \{1, \abs{V(\Gamma)} - 1\}\), and \(m = 1\)
        \item \(\Gamma \cong K_{2m+1}\) and \(n \in \{m, m+1\}\)
        \item \(\Gamma \cong K_{m+1, m+1}\) and \(n \in \{m, m+2\}\)
        \item \(\Gamma \cong K_{m, m+2}\) and \(n = m+1\)
        \item \(\Gamma \cong K_p \sqcup K_q\) for some odd \(p\) and \(q\) satisfying \(p + q = 2m + 2\) and \(n = m+1\)
    \end{enumerate}

\end{thm}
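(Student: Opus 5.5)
The plan is to translate the pseudomanifold condition into a purely combinatorial condition on matchings of \(\Gamma\), and then classify the graphs satisfying it by induction on \(m\).

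\textbf{Step 1 (combinatorial reformulation).} A \(k\)-cell of \(\DConf_n(\Gamma)\) is a matching \(M\) of size \(k\) in \(\Gamma\), together with \(n-k\) labelled vertices disjoint from \(V(M)\) and a labelling of the endpoints of the edges. Fix an \((m-1)\)-cell, with matching \(M\) of size \(m-1\) and occupied vertex set \(S\), where \(|S| = s := n-m+1\). The \(m\)-cells having it as a codimension-one face correspond bijectively to edges \(vw\) with \(v\in S\) and \(w\in U := V(\Gamma)\setminus (V(M)\cup S)\): the label sitting on \(v\) is spread along the new edge. Hence \(\DConf_n(\Gamma)\) is a closed \(m\)-pseudomanifold if and only if two conditions hold. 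First, no \((m+1)\)-cell exists and some \(m\)-cell exists. Second, for every matching \(M\) of size \(m-1\) and every \(s\)-subset \(S\) of \(H_M := \Gamma - V(M)\), the cut satisfies \(e_{H_M}(S, V(H_M)\setminus S) = 2\). The labels play no role, so from here on everything is a statement about \(\Gamma\).

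\textbf{Step 2 (constant cuts).} I would prove a lemma about a graph \(H\) on \(h\) vertices and an integer \(1\le s\le h-1\): if \(e(S,\bar S)\) is the same for all \(s\)-subsets \(S\), then either \(s\in\{1,h-1\}\) and \(H\) is regular, or \(H\) is complete or empty. To prove it, swap a single vertex \(x\in S\) with a vertex \(y\notin S\) and compare the two cuts. This shows \(\deg x - \deg y\) is determined by adjacencies of \(x\) and \(y\) to \(S\), and varying \(S\) (possible when \(2\le s\le h-2\)) forces every pair to be uniformly adjacent or uniformly nonadjacent. With cut value \(2\), a complete graph needs \(s(h-s)=2\), so \(h=3\) and we are again in the \(s\in\{1,h-1\}\) case. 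An empty \(H_M\) is incompatible with the existence of \(m\)-cells; I would check this against the global existence condition, handling the case where some \(H_M\) is empty separately. The conclusion is that \(n\equiv m\) or \(n \equiv |V(\Gamma)|-m\) in the sense \(s\in\{1,h-1\}\), and that \(\Gamma - V(M)\) is \(2\)-regular, i.e.\ a disjoint union of cycles, for every \((m-1)\)-matching \(M\). The symmetry \(S\leftrightarrow U\) gives the duality \(n \leftrightarrow |V(\Gamma)|-n\), up to the shift by \(m\), which pairs the values of \(n\) appearing in each item of Theorem \ref{thm:m-pseudomanifolds}.

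\textbf{Step 3 (graph classification, the main obstacle).} I need all graphs \(\Gamma\) such that \(\Gamma - V(M)\) is a disjoint union of cycles for every \((m-1)\)-matching \(M\), and such that no \(m+1\) disjoint edges fit together with the required free vertices. I would induct on \(m\). For \(m=1\) we get item (1) directly, and the absence of \(2\)-cells for \(n\in\{1,|V|-1\}\) is immediate. For the inductive step, for each edge \(uv\) the graph \(\Gamma-\{u,v\}\) satisfies the condition for \(m-1\), so by induction it lies in the list. Then:
\begin{enumerate}
\item counting degrees shows \(\deg_\Gamma x = \deg_{\Gamma-\{u,v\}}x + |N(x)\cap\{u,v\}|\), and comparing this over different edges \(uv\) forces \(\Gamma\) to be regular or biregular;
\item the structure of the list (complete, complete bipartite, or a union of two odd cliques) then pins \(\Gamma\) down to \(K_{2m+1}\), \(K_{m+1,m+1}\), \(K_{m,m+2}\), or \(K_p\sqcup K_q\);
\item finally I would verify each candidate directly against the cut condition from Step 1 to recover exactly the allowed \(n\).
\end{enumerate}
The delicate points are the small base cases \(m=2\), where cycles of different lengths can appear, and ruling out mixed graphs. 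In these cases Step 2 forces \(\Gamma-V(M)\) to be a single triangle or a \(4\)-cycle of the appropriate parity, and I would try to exploit that rigidity first.
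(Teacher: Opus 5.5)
\textbf{The lemma in your Step 2 is false.} It fails in exactly the cases that produce items (4) and (5) of the theorem.

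Take \(H=K_{1,3}\) or \(H=K_3\sqcup K_1\) with \(h=4\) and \(s=2\). Every \(2\)-subset of vertices has exactly two edges leaving it. Yet \(H\) is neither complete nor empty, and \(s\notin\{1,h-1\}\).

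Your swap argument, with \(T=S\setminus\{x\}\), gives \(\deg y-\deg x=2\bigl(|N(y)\cap T|-|N(x)\cap T|\bigr)\) for every \((s-1)\)-subset \(T\) of \(V(H)\setminus\{x,y\}\). Varying \(T\) only shows that every \(z\neq x,y\) contributes the same amount \(c\in\{-1,0,1\}\) to the right-hand side. You kept only \(c=0\), where \(x\) and \(y\) are twins.

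The case \(c=\pm1\) means one of \(x,y\) is adjacent to every vertex outside \(\{x,y\}\) and the other to none of them. This produces the stars \(K_{1,h-1}\) and the graphs \(K_{h-1}\sqcup K_1\). Both require \(h=2s\), and their cut values are \(s\) and \(s(s-1)\). Cut value \(2\) then gives precisely \(K_{1,3}\) and \(K_3\sqcup K_1\) at \((h,s)=(4,2)\).

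These are exactly the graphs \(\Gamma-V(M)\) for \(\Gamma=K_{m,m+2}\) and \(\Gamma=K_p\sqcup K_q\) with \(n=m+1\). The consequences are:
\begin{itemize}
\item Your Step 2 would ``prove'' that \(\DConf_3(K_{2,4})\) and \(\DConf_3(K_5\sqcup K_1)\) are not pseudomanifolds, although they are closed surfaces.
\item Your base case \(m=1\) omits \(\DConf_2(K_{1,3})\) and \(\DConf_2(K_3\sqcup K_1)\). These are closed \(1\)-manifolds not covered by item (1).
\item Step 3 starts from ``\(\Gamma-V(M)\) is a union of cycles and \(s\in\{1,h-1\}\)''. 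From there it cannot output \(K_{m,m+2}\) or \(K_p\sqcup K_q\), so listing them at the end contradicts what you derived.
\end{itemize}

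\textbf{What survives, and what is still missing.} With the corrected lemma, your Step 2 would be a clean uniform substitute for the paper's local Lemmas~\ref{lem:manifold-max-degree}--\ref{lem:manifold-with-cycle-and-isolated}. It would say that every \(\Gamma-V(M)\) is either a disjoint union of cycles with \(s\in\{1,h-1\}\), or \(K_{1,3}\) or \(K_3\sqcup K_1\) with \(s=2\). Step 3, however, would still be only an outline, with two gaps.

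First, the cut condition for a single \(M\) cannot distinguish one short cycle from several cycles or one long cycle. The paper reaches \(K_3\) or \(K_{2,2}\) for \(m\ge 2\) only by swapping an edge of \(M\) for a cycle edge and comparing the residual graphs of different matchings (Lemma~\ref{lem:manifold-with-only-cycles}). Your remark that Step 2 ``forces'' a triangle or a \(4\)-cycle is therefore not justified.

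Second, ``degree counting forces regular or biregular, and the list pins \(\Gamma\) down'' is unsupported. You must show that different edges \(uv\) cannot produce different members of the list. For fixed \(n\) the remaining ambiguity is \(K_{m-1,m+1}\) versus the various \(K_p\sqcup K_q\); this is the analogue of Lemma~\ref{lem:claw-complement-or-claw-not-special} inside Theorem~\ref{thm:exclusive-subgraphs}. You then need to reconstruct \(\Gamma\) from the common residual graph. That is the real content of Lemmas~\ref{lem:K_r-is-special}, \ref{lem:K_p,q-is-special} and \ref{lem:K_3-cup-K_1-is-special}, for instance showing that \(\Gamma\) must be disconnected with exactly two odd complete components.
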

\begin{proof}
See Subsection \ref{subsection:pseudomanifolds}.
\end{proof}

Since all manifolds are pseudomanifolds,
a complete classification of manifolds follows as a corollary. 

\begin{cor}
  \label{cor:manifold-classification}
%\(\DConf_n(\Gamma)\) is homeomorphic to a closed manifold if and only if it is one of the following.
%\begin{enumerate}
%  \item \(\DConf_n(\bigsqcup_{k\in I} C_k)\) where \(n \in \{1,\abs{V(\Gamma} - 1\}\) and \(I\) is any finite multiset of integers each \(\ge 3\)
%  \item \(\DConf_2(K_3 \sqcup K_1)\)
%  \item \(\DConf_2(K_{1,3})\)
%  \item \(\DConf_n(K_{5}) \) where \(n \in \{2,3\}\)
%  \item \(\DConf_n(K_{3,3})\) where \(n \in \{2, 4\}\)
%  \item \(\DConf_3(K_{2,4})\)
%  \item \(\DConf_3(K_5 \sqcup K_1)\)
%\end{enumerate} 

%Moreover, \(\DConf_n(\bigsqcup_{k\in I} C_k)\), \(\DConf_2(K_3\sqcup K_1)\), and \(\DConf_2(K_{1,3})\) are 1-manifolds, whereas the other cases are 2-manifolds.

\(\DConf_n(\Gamma)\) is homeomorphic to a closed \(m\)-manifold if and only if at least one of the following is true.
\begin{enumerate}
    \item \(\Gamma\) is a finite disjoint union of cycles, \(n \in \{1, \abs{V(\Gamma)} - 1\}\), and \(m = 1\)
    \item \(\Gamma \cong K_3\sqcup K_1\), \(n = 2\), and \(m = 1\)
    \item \(\Gamma \cong K_{1,3}\), \(n = 2\), and \(m = 1\)
    \item \(\Gamma \cong K_5\), \(n \in \{2,3\}\), and \(m = 2\)
    \item \(\Gamma \cong K_{3,3}\), \(n \in \{2,4\}\), and \(m = 2\)
    \item \(\Gamma \cong K_{2,4}\), \(n = 3\), and \(m = 2\)
    \item \(\Gamma \cong K_5\sqcup K_1\), \(n = 3\), and \(m = 2\)
\end{enumerate}

\end{cor}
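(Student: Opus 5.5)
Since every closed \(m\)-manifold triangulated (or cubulated) as a finite cubical complex is a closed \(m\)-pseudomanifold in the sense of Definition \ref{defn:fernandes-pseudomanifold}, Theorem \ref{thm:m-pseudomanifolds} restricts the candidates to the five families listed there. The plan is to run through each family and decide exactly when \(\DConf_n(\Gamma)\) is actually a manifold. The key local criterion is the link condition. A finite cubical complex in which every top cell has dimension \(m\) and every cell lies in a top cell is a closed \(m\)-manifold precisely when the link of each vertex is homeomorphic to \(S^{m-1}\); by induction on dimension this forces the links of all cells to be spheres. For \(m \le 2\) this is easy to check: a vertex link in a 2-complex is a graph, and it must be a single cycle. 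I would therefore first describe the link of a vertex \((v_1,\dots,v_n)\) of \(\DConf_n(\Gamma)\) combinatorially. Its vertices are the moves of one token along an edge into an unoccupied vertex. A set of such moves spans a simplex exactly when the moved tokens use pairwise disjoint closed edges, which is the matching language of Definition \ref{defn:matching-set}.

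\textbf{Case (1).} For disjoint unions of cycles with \(n=1\) or \(n=|V(\Gamma)|-1\), the complex is a graph in which every vertex has degree \(2\), so it is a disjoint union of circles. All of these are manifolds.

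\textbf{Small \(m\) in cases (2)--(5).} When \(m=1\), the families reduce to \(K_3\) with \(n\in\{1,2\}\) (already covered by cycles), \(K_{2,2}=C_4\) with \(n\in\{1,3\}\) (also cycles), \(K_{1,3}\) with \(n=2\), and \(K_1\sqcup K_3\) with \(n=2\). In the last two the 1-pseudomanifold condition already makes the complex a disjoint union of circles, so they are manifolds. When \(m=2\), the candidates are \(K_5\) with \(n\in\{2,3\}\), \(K_{3,3}\) with \(n\in\{2,4\}\), \(K_{2,4}\) with \(n=3\), and \(K_p\sqcup K_q\) with \(p+q=6\) both odd, i.e.\ \(K_5\sqcup K_1\) or \(K_3\sqcup K_3\), with \(n=3\). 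For each of these I would compute the vertex link graph and check that it is a single cycle rather than a disjoint union of cycles. Abrams' results and duality already handle \(K_5\), \(K_{3,3}\) and the complements. The case \(K_3\sqcup K_3\) with \(n=3\) must fail: one component holds two tokens and the other holds one, so the configuration space locally factors as a product of two 1-dimensional pieces. The vertex link then comes out as a disjoint union of cycles, or fails to be connected, and I would verify this explicitly. \(K_5\sqcup K_1\) is similar, but there the token on \(K_1\) is frozen only when it lies there. Otherwise it sits in \(K_5\), which has \(\DConf_3(K_5)\) as a manifold piece, and the remaining pieces need checking.

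\textbf{Case \(m\ge 3\) (main obstacle).} I must show that none of \(K_{2m+1}\), \(K_{m+1,m+1}\), \(K_{m,m+2}\), \(K_p\sqcup K_q\) gives a manifold. My first attempt is to exhibit a vertex link that is not simply connected, or whose homology differs from \(S^{m-1}\). For example, in \(\DConf_m(K_{2m+1})\) the link of a vertex is a flag-like complex built from matchings between the \(m\) occupied vertices and the \(m+1\) empty ones. I would look at the links of codimension-\(3\) cells instead: each is a 2-dimensional complex, to be compared with \(S^2\). Using the matching description, I expect it to be a product-like (join) complex whose Euler characteristic is not \(2\), for instance a torus or a higher-genus surface arising from a \(K_{3,3}\)-type local pattern. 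This would mean that the \(m=2\) examples appear as links inside the higher ones and obstruct sphericity. Making this uniform across all four families, and handling the duality counterparts \(n=m+1\), \(n=m+2\), is where I expect the real work to lie.
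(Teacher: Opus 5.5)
There is a genuine gap in the non-manifold direction. For the disjoint-union family you are also looking for the wrong obstruction.

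Take \(K_3\sqcup K_3\) with \(n=3\), at a configuration with two tokens on one triangle and one on the other. Each of the two tokens can slide into the single empty vertex (not simultaneously). The lone token can slide to either of its two empty neighbours. Every move on one triangle combines with every move on the other, so the vertex link is \(K_{2,2}\), a single \(4\)-cycle. Your ``product of two \(1\)-dimensional pieces'' is \(\DConf_2(K_3)\times K_3\), a union of tori, not a singularity.

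What actually fails is purity. Three tokens filling one \(K_3\) cannot move, so \(\DConf_3(K_3\sqcup K_3)\) has isolated points. Your link criterion assumes every cell lies in an \(m\)-cell, and you never check this, yet it is exactly what breaks for this family. Searching for bad links in the top-dimensional part cannot replace that check. For \(K_5\sqcup K_3\) with \(m=3\), \(n=4\), the \(3\)-dimensional components are copies of \(\DConf_3(K_5)\times K_3\) and \(\DConf_2(K_5)\times\DConf_2(K_3)\), i.e.\ surface times circle. Meanwhile the configurations with \(K_3\) filled, or with all four tokens on \(K_5\), form \(1\)-dimensional components.

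The paper's Proposition~\ref{prop:remaining-non-manifolds1} argues this way. For \(p\ge q\ge 3\), filling \(K_q\) with \(q\) tokens gives an isolated point if \(q=m+1\), and otherwise leaves only \(m+1-q<m\) tokens able to move. Relatedly, do not group \(K_5\sqcup K_1\) with \(K_3\sqcup K_3\). It splits as \(\DConf_3(K_5)\) plus one copy of \(\DConf_2(K_5)\) for each choice of the token on \(K_1\), and is therefore a closed surface.

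For \(m\ge 3\) and the remaining families you only state an expectation, and this is the heart of the corollary. Links of \((m-3)\)-cells are the right tool, and the paper uses them, but the count must actually be done.
\begin{itemize}
\item Choose an \((m-3)\)-matching \(M\) so that \(\Gamma-V(M)\) is \(K_7\), \(K_{4,4}\) or \(K_{3,5}\).
\item Place the \(n-(m-3)\) stationary tokens so that the only available moves join \(3\) tokens to \(4\) free vertices, or \(4\) tokens to \(3\) free vertices. Concretely: three or four tokens on \(K_7\); three tokens on one side of \(K_{4,4}\), or four filling one side and one on the other; four tokens on the \(5\)-side of \(K_{3,5}\).
\item The link is then the complex of partial matchings of a \(3\times 4\) board. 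It has \(12\) vertices, \(36\) edges and \(24\) triangles, so \(\chi=0\).
\end{itemize}
This is an obstruction because, by local homology, the link of a \(d\)-cell in an \(m\)-manifold is a homology \((m-d-1)\)-sphere. Your stated equivalence with vertex links being spheres only holds in the ``if'' direction in general, and it is not what you need here. The \(\chi=0\) count disposes of \(K_{2m+1}\) (\(n=m,m+1\)), \(K_{m+1,m+1}\) (\(n=m,m+2\)), \(K_{m,m+2}\), and the large component of \(K_{2m+1}\sqcup K_1\). The paper cites Abrams for \(n=m\) and does this count in Proposition~\ref{prop:remaining-non-manifolds2}.

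Your \(m=1\) shortcut, that a \(1\)-dimensional closed \(1\)-pseudomanifold is a disjoint union of circles, is correct and cleaner than drawing \(\DConf_2(K_{1,3})\).
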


\begin{proof}
See Subsection \ref{subsection:manifolds}.
\end{proof}

The remainder of the article is organized as follows. In Section \ref{section:definitions}, we review the definitions for configuration spaces and establish notation. Section \ref{section:pseudomanifolds} is devoted to establishing several important lemmas that restrict the types of graphs that are permitted. In Section \ref{section:special-subgraphs}, we use the techniques developed in previous sections to reconstruct what the graph has to be. Finally, Section \ref{section:questions} is dedicated to open questions.
    \section{Definitions and Preliminaries}\label{section:definitions}

\subsection{Configuration Spaces}
\begin{defn}
    \label{defn:topological_configuration_space}
The \(n\)-point configuration space of a space \(X\) is
\(\Conf_n(X) = X^n \setminus \Delta\) where \(\Delta\) is the ``fat'' diagonal
\(\{(x_1, \cdots, x_n) \in X^n \mid x_i = x_j \text{ for some } i \neq j\}\).
\end{defn}

When our space is the geometric realization of a graph denoted \(\abs{\Gamma}\), we will often just write \(\Conf_n(\Gamma)\) instead of \(\Conf_n(\abs{\Gamma})\), but both of these symbols refer to the same space. Intuitively, \(\Conf_n(X)\)
is the set of positions of \(n\) particles placed in \(X\) such that
no two particles occupy the same spot.
Given some fixed point \(\vec{x} \in X^n\),
we can define the \(n\)-strand pure braid group of \(X\) as \(P_n(X) = \pi_1(\Conf_n(X), \vec{x})\).
When \(X = \mathbb{R}^2\), this is simply known as the \(n\)-strand pure braid group.
By taking \(\pi_1\) of the \(n\)-point \textit{unordered} configuration space of \(X\),
we obtain the braid group of \(X\) denoted \(B_n(X)\).
The braid group of \(\mathbb{R}^2\) is the classical Artin braid group.
We point the reader to \cite{birmanBraidsSurvey2005} for a comprehensive survey of braids. 

\begin{defn}
\label{defn:topological_unordered_configuration_space}
Given a space \(X\), 
the \(n\)-point unordered configuration space of \(X\)
is \(\UConf_n(X) = \Conf_n(X) / S_n\)
where \(S_n\) is the degree \(n\) permutation group acting on \(\Conf_n(X)\) by permuting the coordinates
of points in \(\Conf_n(X)\).
\end{defn}

Intuitively one can think of \(\UConf_n(X)\) as the set of positions of \(n\) indistinguishable particles placed in \(X\)
such that no two particles occupy the same spot.
More precisely, a permutation \(\sigma\) of the set \(\{1,\cdots, n\}\)
acts on a point \(\vec{x} = (x_1, \cdots, x_n)\) in \(\Conf_n(X)\)
by giving the point \(\sigma \vec{x} = (x_{\sigma(1)}, \cdots, x_{\sigma(n)})\).
The orbit of \(\vec{x}\) under this action is the set \(S_n \vec{x} = \{\sigma \vec{x} \mid \sigma \in S_n\}\),
and the orbit space \(\Conf_n(X) / S_n\) is the set of orbits of points in \(\Conf_n(X)\).

Critically, this action is a \textit{covering space action} (see Proposition 1.40 of \cite{hatcherAlgebraicTopology2001})
meaning \(p: \Conf_n(X) \rightarrow \Conf_n(X) / S_n\) where \(p(\vec{x}) = S_n \vec{x}\)
is a covering map.
Hence by Proposition 1.40 and Proposition 1.32 of \cite{hatcherAlgebraicTopology2001} 
if \(\Conf_n(X)\) and \(\UConf_n(X)\) are both path connected
and locally path connected, then \(\Conf_n(X)\) is an \(n!\)-fold cover of \(\UConf_n(X)\). In this work we primarily work with the ordered configuration space although the unordered space will come in handy to simplify some counting arguments in subsection \ref{subsection:euler}.

\subsection{Graph Configuration Spaces}
In Figure \ref{fig:K2_and_Conf2}, we illustrate \(\Conf_2(\abs{\Gamma})\)
when \(\Gamma \cong K_2\).
The placement of two particles on \(\abs{\Gamma}\) such that the first
is at \(v\) and the second is as \(w\) corresponds
to the point \((v,w)\) in the upper left.
As these two particles move on \(\abs{\Gamma}\), a triangular
region is spanned in the configuration space.
If we instead placed our particles down 
such that the first is at \(w\) and the second is at \(v\),
and let them move, a distinct triangular region is spanned in the lower right.
These two regions are disconnected by the diagonal where
the two particles have the same position.

\begin{figure}[h!]
		\centering
        \begin{tikzpicture}[scale=2]
            \begin{scope}
                \node[dormant_graph_node] (v) at (0,0) {\(v\)};
                \node[dormant_graph_node] (w) at (0,1) {\(w\)};
                \draw (v) -- (w);
            \end{scope}
            \begin{scope}[shift={(2,0)}]
                \fill[gray, opacity=0.45] (0,0) rectangle (1,1);
                \draw[thick] (0,0) rectangle (1,1);
                \node[left] (vv) at (0,0) {\((v, v)\)};
                \draw[gray, dashed, thick] (0,0) -- (1,1);
                \node[right] (ww) at (1,1) {\((w, w)\)};
                \node[left] (ww) at (0,1) {\((v, w)\)};
                \node[right] (ww) at (1,0) {\((w, v)\)};
            \end{scope}
        \end{tikzpicture}
        \caption{\(\Gamma \cong K_2\) and \(\Conf_2(\Gamma)\)}
        \label{fig:K2_and_Conf2}
\end{figure}

\begin{defn}[Definition 2.1 in \cite{abrams2000configurationspaces}]
\label{defn:combinatorial_configuration_space}
Given a graph \(\Gamma\), the \(n\)-point discretized configuration space of \(\Gamma\)
is \(\DConf_n(\Gamma) = \abs{\Gamma}^n - \Delta^{\square}\)
where 
\(\Delta^{\square}\)
is the union of every \(n\)-tuple of cells \((e_1, \cdots, e_n)\) in \(\abs{\Gamma}^n\)
where \(\partial e_i \cap \partial e_j \neq \emptyset\) for some \(i \neq j\).
\end{defn}

In the language of placing particles on \(\abs{\Gamma}\), instead of just requiring that no two particles
occupy the same position like in \(\Conf_n(\Gamma)\), we now also require that no two particles occupy ``adjacent'' positions
where two positions in \(\abs{\Gamma}\) are adjacent if the cells \(e_i\) and \(e_j\) that contain them
satisfy \(\partial e_i \cap \partial e_j \neq \emptyset\).

\begin{figure}[h!]
	\centering
    \begin{tikzpicture}[scale=2]
            \begin{scope}
                \node[dormant_graph_node] (v) at (0,0) {\(v\)};
                \node[dormant_graph_node] (w) at (0,1) {\(w\)};
                \draw (v) -- (w);
            \end{scope}
            \begin{scope}[shift={(2,0)}]
                \node[circle, fill, inner sep=1.5pt, label=right:{\((w,v)\)}] (v) at (1,0) {};
                \node[circle, fill, inner sep=1.5pt, label=left:{\((v,w)\)}] (w) at (0,1) {};
            \end{scope}
    \end{tikzpicture}
    \caption{\(K_2\) and \(\DConf_2(K_2)\)}
    \label{fig:K2_and_DConf2}
\end{figure}

In Figure \ref{fig:K2_and_DConf2} we revisit placing two particles on \(\Gamma \cong K_2\).
We illustrate like we did in \(\Conf_2(\Gamma)\) 
what happens when we place the first particle at \(v\) and the second at \(w\), 
by plotting the corresponding point in the top left.
However, if we tried to move either particle off of its vertex, we would find
that the boundary of the edge \(vw\) containing it intersects the vertex containing the other particle.
Hence, both particles are stuck and we have an isolated point in \(\DConf_2(\Gamma)\).
Similarly, placing the first particle at \(w\) and the second at \(v\),
we again find both particles are stuck resulting in another isolated point in \(\DConf_2(\Gamma)\).

In the context of Definitions \ref{defn:topological_configuration_space} and \ref{defn:combinatorial_configuration_space} 
since \(\Delta^{\square}\) is so much larger than \(\Delta\),
one might wonder what the resemblance between \(\Conf_n(\Gamma)\) and \(\DConf_n(\Gamma)\) is.
Abrams showed in Theorem 2.1 of \cite{abrams2000configurationspaces} that \(\Conf_n(\Gamma)\) deformation retracts onto \(\DConf_n(\Gamma)\) 
when \(\Gamma\) is ``sufficiently subdivided'' i.e.\ extra degree-\(2\) vertices are inserted on edges
so that conditions 1. and 2. of the following theorem are satisfied.

\begin{thm}[Theorem 3.2 in \cite{PRUE2014136}]
    \label{thm:suitable_subdivision}
    Let \(n > 1\) and \(\Gamma\) be a connected graph.
    We say a vertex is essential if its degree is not \(2\). 
    \(\Conf_n(\Gamma)\) deformation retracts onto \(\DConf_n(\Gamma)\) if both of the following are true.
    \begin{enumerate}
        \item Each path connecting distinct essential vertices of \(\Gamma\) has length at least \(n-1\).
        \item Each homotopically essential path connecting a vertex to itself has length at least \(n+1\).
    \end{enumerate}
\end{thm}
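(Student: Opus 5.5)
The plan is to build the deformation retraction one cell at a time. We filter \(\abs{\Gamma}^n\) by its product cell structure. Each closed product cell \(\sigma = e_1 \times \cdots \times e_n\) is a cube, and \(\sigma \cap \Conf_n(\Gamma)\) is this cube minus the finitely many hyperplane pieces \(\{x_i = x_j\}\) that occur when \(e_i\) and \(e_j\) share closure points. The cells lying entirely in \(\DConf_n(\Gamma)\) stay fixed. For each remaining cell we show that \(\sigma \cap \Conf_n(\Gamma)\) deformation retracts onto the part of its boundary that has already been handled. We then glue these retractions in order of decreasing dimension, in the style of Abrams' proof of Theorem 2.1 of \cite{abrams2000configurationspaces}. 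The gluing is valid because each step is a relative deformation retraction rel the lower skeleton, so the retractions agree on overlaps.

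First, we reduce to a combinatorial picture. Since \(\Gamma\) is connected and \(n>1\), we decompose \(\abs{\Gamma}\) into essential vertices and \emph{arcs}. An arc is a maximal chain of degree-\(2\) vertices together with its edges, so it runs either between essential vertices or around a cycle component. A point of \(\Conf_n(\Gamma)\) is then determined by three pieces of data: which arc or essential vertex carries each particle, the linear or cyclic order of the particles along each arc, and their continuous positions. Inside one order-type stratum, the positions form a convex region: a product of open simplices of ordered points along intervals, or a cyclic analogue. The plan is to define a straight-line \emph{spreading} homotopy on each stratum. Particles on an arc slide to canonical equally spaced vertex or edge positions, determined only by the order type and by which particles sit at the endpoints. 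We then check that these canonical positions are pairwise nonadjacent cells, so the endpoint of the homotopy lies in \(\DConf_n(\Gamma)\).

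The counting step is where conditions 1 and 2 enter. If an arc between distinct essential vertices carries \(k \le n\) particles, possibly including particles at its endpoint vertices, then length at least \(n-1\) should give enough room. The aim is to place all of them on vertices that are pairwise at distance at least \(2\), or on edges whose closures are disjoint. This needs care, because particles at the two ends interact with particles on other arcs through the shared essential vertex, and that vertex is where the budget of \(n-1\) is tight. For a cycle, or a homotopically essential loop at a vertex, length \(n+1\) lets the \(n\) particles sit on \(n\) of the \(n+1\) vertices with one gap. Without that gap the cyclic configuration would be rigid and the cyclic order could not be realised.

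The main obstacle is continuity across strata. We must show that the canonical positions depend continuously on the configuration when a particle passes through an essential vertex, changing arcs, and that straight-line homotopies fix points already in \(\DConf_n(\Gamma)\). My first attempt would be to define the target positions by a piecewise-linear formula in the arc coordinates that is monotone and equivariant under reordering at the essential vertices. If that fails, I would fall back on the cellwise collapsing approach from the first paragraph, verifying for each bad cube that \(\sigma\cap\Conf_n(\Gamma)\) is star-shaped relative to the union of its good faces, with the length hypotheses guaranteeing that such good faces exist.
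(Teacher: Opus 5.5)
The paper does not prove this statement; it quotes it from Prue--Scrimshaw, so your proposal has to stand on its own. It has a genuine gap. The global ``pushing'' that is the real content of the theorem is never constructed, and neither mechanism you propose supplies it.

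\textbf{The cellwise fallback.} It fails because some bad cells have no good face at all. Take $n=3$ and any edge $e=uw$. Consider the open cell where particle $1$ sits at $u$, particle $2$ at $w$, and particle $3$ in the interior of $e$. It lies in $\Conf_3(\Gamma)$, but the only proper faces of its closure are the two $0$-cells where particle $3$ reaches $u$ or $w$, and both are on the fat diagonal. So its trace in $\Conf_3(\Gamma)$ is an open interval with no face in $\DConf_3(\Gamma)$ and no boundary left inside $\Conf_3(\Gamma)$. The cube $e\times e\times e$ is the same: three particles confined to one closed edge never have pairwise disjoint closures. This happens however finely $\Gamma$ is subdivided, so the claim that ``the length hypotheses guarantee good faces'' is false. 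A decreasing-dimension retraction of each cube onto part of its own boundary, rel the lower skeleton, gets stuck at such cells. If you instead collapse through faces, those faces are shared by many cubes: the interval above is a face of every cube in which particle $1$ moves along an edge at $u$ or particle $2$ along an edge at $w$. Agreement on overlaps is then no longer automatic. More basically, an argument local to one cube never uses conditions 1 and 2, yet the theorem fails without them: $\Conf_3(K_2)\neq\emptyset=\DConf_3(K_2)$. Configurations like the ones above must be moved through many cells, e.g.\ by pushing the particle at $w$ further along the graph to make room. That is exactly where the path lengths have to enter.

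\textbf{The primary plan.} This is where that pushing would have to happen, but as stated it cannot produce a deformation retraction, and its central step is left open. If the endpoint of the homotopy depends only on the order type and endpoint occupancy, it cannot fix $\DConf_n(\Gamma)$ pointwise. Two particles at different pairs of interior vertices of one long arc, in the same order, have identical data but are distinct points of $\DConf_2(\Gamma)$. Worse, the open strata are dense, so a continuous map that is constant on each open stratum is constant on each component of $\Conf_n(\Gamma)$. Its image is then discrete, whereas $\DConf_n(\Gamma)$ usually is not; for example $\DConf_2(K_{1,3})$ is a circle and satisfies the hypotheses. The target positions must vary continuously with the actual positions, restrict to the identity on $\DConf_n(\Gamma)$, and stay continuous as particles pass through essential vertices. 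You name this as ``the main obstacle'' but do not resolve it.

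\textbf{The counting step.} It misreads the definition. Particles at distinct vertices always have disjoint closures, adjacent or not; this is why $\DConf_2(K_2)$ is two points. Requiring pairwise distance at least $2$ would need an arc of length at least $2n-2$ for $n$ particles, so under your reading the bound $n-1$ could never suffice. With the correct reading, $n$ particles fit on the $n$ vertices of an arc of length $n-1$. The remaining difficulty, particles from different arcs competing for a shared essential vertex, is only flagged in your proposal, not handled.
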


Abrams originally conjectured that the conditions of this theorem were sufficient, but this was 
first proven in \cite{PRUE2014136}.
As mentioned in the introduction, Theorem \ref{thm:suitable_subdivision} means that by studying \(\DConf_n(\Gamma)\) we can still
determine homotopy invariants of \(\Conf_n(\Gamma)\) such as the fundamental group or homology groups
as long as \(\Gamma\) is sufficiently subdivided.
It is worth mentioning at this point that Abram's discretized graph configuration space 
is not the only ``discretized graph configuration space''. In \cite{Maciazek2019} the authors consider Swiatkowski's combinatorial
graph configuration space. They remark that Swiatkowski's model has the advantage that 
it has the same homological dimension as \(\Conf_n(\Gamma)\) without any necessary subdivisions.
However, they note that Abrams' model is more convenient to use when applying discrete Morse theory.
In this work, we only consider Abram's model and do not subdivide the graph. We note that each space in Corollary \ref{cor:manifold-classification} except \(\DConf_3(K_5)\), \(\DConf_4(K_{3,3})\), and \(\DConf_3(K_5\sqcup K_1)\) is a deformation retract of its ordinary configuration space since if \(n\) is 3 or 4, then the path between the essential vertices needs to be at least 2 or 3 edges, respectively. However, in all of these graphs there is only 1 edge between the essential vertices.

We can also construct the unordered discretized graph configuration space \(\DUConf_n(\Gamma)\)
in essentially the same way as we did in Definition \ref{defn:topological_unordered_configuration_space}.
The same reasoning that follows after Definition \ref{defn:topological_unordered_configuration_space} shows that \(\DConf_n(\Gamma)\) is an \(n!\)-fold cover of \(\DUConf_n(\Gamma)\) when both \(\DConf_n(\Gamma)\) and \(\DUConf_n(\Gamma)\) are path connected. Theorem 2.6 in \cite{abrams2000configurationspaces} shows that \(\DUConf_n(\Gamma)\) is path connected if and only if \(\Gamma\) is connected and has more than \(n\) vertices.
However, the necessary conditions for connectivity of \(\DConf_n(\Gamma)\) are more complex (see Theorem 2.9 of \cite{abrams2000configurationspaces}).
We list some instances of \(\DConf_n(\Gamma)\) that are connected below.
\begin{prop}
    \label{prop:connected_dconf}
    The following spaces are connected.
\begin{enumerate}
\item \(\DConf_n(K_5)\) when \(n \in \{2,3\}\)
\item \(\DConf_n(K_{3,3})\) when \(n \in \{2,4\}\)
\item \(\DConf_3(K_{2,4})\)
\end{enumerate}
\end{prop}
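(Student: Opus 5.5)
The plan is to prove connectivity of \(\DConf_n(\Gamma)\) by working directly in its \(1\)-skeleton. Cells of \(\DConf_n(\Gamma)\) are products of pairwise disjoint closed cells, so every point lies in a cell whose closure contains a vertex. It therefore suffices to show that any two vertices are joined by an edge path.

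Vertices are ordered \(n\)-tuples of distinct vertices of \(\Gamma\). An edge of \(\DConf_n(\Gamma)\) moves exactly one particle along an edge \(uw\) of \(\Gamma\), where \(w\) is unoccupied and neither \(u\) nor \(w\) is adjacent to any other occupied vertex. In the cases below this edge condition reduces to a pure occupancy condition. In \(K_5\) and \(K_{3,3}\), two occupied vertices are never joined by a cell when they are adjacent, so the only requirement is that the endpoints of \(uw\) avoid the closed cells of the other particles. Concretely, the move is allowed exactly when no other particle sits at \(u\) or \(w\).

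\emph{Reduction to transpositions.} I will first show that the unordered configuration graph is connected. This follows from Abrams' Theorem 2.6, since each graph is connected and has more than \(n\) vertices (\(5>3\), \(6>4\), \(6>3\)). Given that, it suffices to show that for one fixed vertex configuration we can realize every transposition of particle labels by a closed loop in the unordered space that lifts to the swap. Transpositions generate \(S_n\), and the unordered path can be lifted to connect any ordered configuration to a relabeling of any target; the swaps then fix the labels.

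\emph{Case \(K_5\).} For \(n=2\), there are three empty vertices. To swap particles at \(a\) and \(b\), move \(a\to c\), then \(b\to a\), then \(c\to b\). Each step is legal because moving along \(uw\) only requires \(u,w\) to be distinct from the other particle's vertex. For \(n=3\), there are two free vertices \(d,e\). To swap the particles at \(a\) and \(b\), move \(a\to d\), \(b\to a\), \(d\to b\), using free vertices each time.

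\emph{Case \(K_{3,3}\).} Take parts \(\{a_1,a_2,a_3\}\) and \(\{b_1,b_2,b_3\}\). The hard case is \(n=4\), where only two vertices are free. A move \(u\to w\) needs \(w\) free and \(u,w\) adjacent, so every move crosses between the parts, and the free pair changes accordingly. Starting from particles at \(a_1,a_2,b_1,b_2\) with \(a_3,b_3\) free, one can cyclically rotate particles around the \(6\)-cycle \(a_1 b_3 a_2 b_1 \dots\) by a sliding-puzzle argument. I will exhibit explicit short sequences that swap two particles on the same side and two on opposite sides. For example, \(a_1\to b_3\), \(b_1\to a_3\)? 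That move is invalid because \(b_1\) and \(a_3\) must be adjacent, which they are, and \(a_3\) is free; so it is in fact legal, and this is the sort of bookkeeping that must be checked carefully.

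The \(n=2\) case is easy because there is a lot of room. For \(K_{2,4}\) with \(n=3\), three vertices are free, and the same three-move swap trick works whenever a common neighbour is free; I will check both the same-part and cross-part pairs.

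The main obstacle is \(\DConf_4(K_{3,3})\). The tightness there means the swaps must be found by hand, or by a parity/15-puzzle style argument showing the generated permutation group is all of \(S_4\). I would verify this by finding one transposition and one \(4\)-cycle.
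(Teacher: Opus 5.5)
Your framework is sound, and it differs from the paper, which simply cites Abrams' Theorem 2.9. Connectivity of the unordered space (Abrams' Theorem 2.6), together with realizing a generating set of \(S_n\) by loops at one base configuration, does give connectivity of \(\DConf_n(\Gamma)\), and your \(K_5\) swaps are correct. Your first formulation of the edge condition, that \(u,w\) not be adjacent to other occupied vertices, is wrong for Abrams' model and would freeze every configuration of \(K_5\) with \(n\ge 2\). The condition you actually use, that no other particle sits at \(u\) or \(w\), is the right one for every simple graph.

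\textbf{The gap: the swap trick needs a triangle.} The swap \(a\to c\), \(b\to a\), \(c\to b\) requires \(a\sim c\), \(a\sim b\) and \(b\sim c\). Neither \(K_{2,4}\) nor \(K_{3,3}\) has a triangle. If \(c\) is a free common neighbour of \(a\) and \(b\), then \(a,b\) lie in the same part, so \(b\to a\) is not a move. Consequences:
\begin{enumerate}
\item Your claim for \(K_{2,4}\) fails as stated.
\item For \(\DConf_4(K_{3,3})\), which you rightly call the crux, no swap is produced at all.
\item Rotating particles around a \(6\)-cycle preserves their cyclic order, so it only yields cyclic rotations.
\item Your proposed final check is insufficient: \((1\,3)\) and \((1\,2\,3\,4)\) generate only a dihedral group of order \(8\).
\item A parity argument cannot establish fullness either. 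In one-hole bipartite sliding puzzles parity is precisely the obstruction, so you must exhibit an odd permutation explicitly.
\end{enumerate}

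\textbf{The missing idea: relay through the opposite part.} If \(h\) is free and lies in the other part, then \(u\to h\to w\) moves a particle between two vertices of the same part. Each part then behaves like a complete graph, and your triangle trick applies there.

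In \(K_{3,3}\), place particles \(1,2,3,4\) at \(a_1,a_2,b_1,b_2\), with \(a_3,b_3\) free.
\begin{enumerate}
\item The moves \(1\colon a_1\to b_3\to a_3\), then \(2\colon a_2\to b_3\to a_1\), then \(1\colon a_3\to b_3\to a_2\) realize \((1\,2)\).
\item The symmetric sequence \(3\colon b_1\to a_3\to b_3\), then \(4\colon b_2\to a_3\to b_1\), then \(3\colon b_3\to a_3\to b_2\) realizes \((3\,4)\).
\item The moves \(1\colon a_1\to b_3\), then \(3\colon b_1\to a_1\), then \(1\colon b_3\to a_3\to b_1\) realize \((1\,3)\).
\end{enumerate}
These three transpositions generate \(S_4\).

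For \(\DConf_3(K_{2,4})\), put the three particles in the part of size \(4\) and relay through one vertex of the part of size \(2\); this gives \((1\,2)\) and \((2\,3)\). \(\DConf_2(K_{3,3})\) is handled the same way. With these sequences supplied, your argument is complete.
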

\begin{proof}
See Theorem 2.9 of \cite{abrams2000configurationspaces}.
\end{proof}

\subsection{Matchings and Euler characteristics}
\label{subsection:euler}
To compute the Euler characteristic of \(\DConf_n(\Gamma)\) we need to count the number of \(k\)-cubes present in \(\DConf_n(\Gamma)\).  
Observe thata \(k\)-cube exists in \(\DConf_n(\Gamma)\) if and only if 
\(k\) particles are free to move simultaneously while \(n - k\) particles remain fixed.
Since \(k\) particles are only able to move simultaneously when they are placed on \(k\)
disjoint edges, there is a one-to-one correspondence between \(k\)-\textit{matchings} and \(k\)-cubes in \(\DConf_n(\Gamma)\).

\begin{defn}
    \label{defn:matching-set}
\(\mathcal{M}_k(\Gamma)\) is the collection consisting of all \(k\)-matchings in \(\Gamma\). That is, it is the collection of all unordered sets of exactly \(k\) disjoint edges in \(\Gamma\)
where \(\mathcal{M}_0(\Gamma) = \{\emptyset\}\).
\end{defn}

Note that \(\abs{\mathcal{M}_0(\Gamma)} = 1\).
By ignoring the order we place down our particles in, 
we can express the number of \(k\)-cubes in \(\DUConf_n(\Gamma)\) as
\[
    \abs{\mathcal{M}_k(\Gamma)} \cdot\binom{\abs{V(\Gamma)} - 2k}{n - k}.
\]
We now express the Euler characteristic of \(\DUConf_n(\Gamma)\) in the following lemma.
\begin{lem}
\label{lem:eulercharacteristic}
\[
\chi(\DUConf_n(\Gamma)) = \sum_{k=0}^{n} (-1)^k \abs{\mathcal{M}_k(\Gamma)} \binom{\abs{V(\Gamma)} - 2k}{n - k}
\]
\end{lem}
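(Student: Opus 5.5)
The formula follows from the definition of the Euler characteristic of a finite CW (here cubical) complex as the alternating sum of its cell counts. So the plan is to show that \(\DUConf_n(\Gamma)\) inherits a cubical cell structure from \(\DConf_n(\Gamma)\), and that the number of \(k\)-cubes in it is \(\abs{\mathcal{M}_k(\Gamma)}\binom{\abs{V(\Gamma)}-2k}{n-k}\).

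First I will describe the cells of \(\DConf_n(\Gamma)\). By Definition \ref{defn:combinatorial_configuration_space}, \(\DConf_n(\Gamma)\) is the union of the open cells \(e_1\times\cdots\times e_n\) of \(\abs{\Gamma}^n\) whose closures have pairwise disjoint closures. Since \(\Delta^{\square}\) is a union of closed cells, \(\DConf_n(\Gamma)\) is a subcomplex. Its dimension is the number of \(e_i\) that are edges. A \(k\)-cell is therefore an ordered \(n\)-tuple consisting of \(k\) edges and \(n-k\) vertices, all pairwise disjoint as closed sets. The \(k\) edges form a \(k\)-matching. The \(n-k\) vertices are distinct and avoid the \(2k\) endpoints of the matching edges.

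Next I will pass to the quotient. The \(S_n\)-action permutes these cells and acts freely on them, because the entries of a tuple are pairwise distinct cells. Moreover, a permutation fixing a cell setwise fixes it pointwise, since it must fix the tuple. Hence the quotient \(\DUConf_n(\Gamma)\) is a CW complex whose \(k\)-cells are the \(S_n\)-orbits of \(k\)-cells, that is, unordered sets \(\{e_1,\dots,e_n\}\) of the same type. Such an unordered cell is determined by two independent choices: a matching in \(\mathcal{M}_k(\Gamma)\), and an \((n-k)\)-subset of the \(\abs{V(\Gamma)}-2k\) vertices not covered by it. This gives the count \(\abs{\mathcal{M}_k(\Gamma)}\binom{\abs{V(\Gamma)}-2k}{n-k}\), which matches the count stated before the lemma.

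Finally I will sum \((-1)^k\) times this count over \(k\). Only \(k\le n\) contributes, since a cell has at most \(n\) edge-coordinates. Terms where \(\abs{V(\Gamma)}-2k<n-k\) are zero, because the binomial coefficient vanishes.

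The main obstacle is justifying that the quotient really carries the induced CW structure, so that cellular counting computes \(\chi\). This holds because \(S_n\) acts cellularly, freely on cells, and with no cell mapped to itself nontrivially. Alternatively, I could compute \(\chi(\DConf_n(\Gamma))\) by counting ordered cells, which gives exactly \(n!\) times the formula above, and then divide by \(n!\) using the covering \(\DConf_n(\Gamma)\to\DUConf_n(\Gamma)\), which is \(n!\)-sheeted regardless of connectivity because the action is free.
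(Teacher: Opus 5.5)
Your proposal is correct and is the paper's own argument, given in the paragraph before the lemma. A $k$-cube of $\DUConf_n(\Gamma)$ is a $k$-matching together with an $(n-k)$-subset of the $\abs{V(\Gamma)}-2k$ uncovered vertices, and $\chi$ is the alternating sum of these counts; your point that $S_n$ acts freely and cellularly (equivalently, that the cover is $n!$-sheeted even without connectivity) supplies a justification the paper leaves implicit.

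One small fix: $\DConf_n(\Gamma)$ is a subcomplex because the cells whose coordinates have pairwise disjoint closures are closed under taking faces, not because $\Delta^{\square}$ is a union of closed cells. If $\Delta^{\square}$ were a union of closed cells, its complement would be open, and for instance both points of $\DConf_2(K_2)$ would be removed.
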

We start with \(\DUConf_n(\Gamma)\) since it is generally easier to count in compared to \(\DConf_n(\Gamma)\).
Furthermore, counting the number of \(k\)-matchings for small graphs can be done efficiently 
with a simple sweep over all \(k\) combinations of disjoint edges in \(\Gamma\).
In Figure \ref{fig:euler_characteristics}, we have computed the Euler characteristic for most of the graphs in Corollary \ref{cor:manifold-classification} (\(\DConf_3(K_5\sqcup K_1)\) is just the disjoint union of \(\DConf_2(K_5)\) and \(\DConf_3(K_5)\)).
When \(\DConf_n(\Gamma)\) is connected, it is an \(n!\)-fold cover of \(\DUConf_n(\Gamma)\),
meaning \(\chi(\DConf_n(\Gamma)) = n!\chi(\DUConf_n(\Gamma))\). Using Proposition \ref{prop:connected_dconf}, we also add \(\chi(\Conf_n(\Gamma))\) to Figure \ref{fig:euler_characteristics}.

\begin{figure}[h!]
\centering
\begin{tabular}{c | c | c | c}
    \(\Gamma\) & \(n\) & \(\chi(\DUConf_n(\Gamma))\) & \(\chi(\Conf_n(\Gamma))\)\\
    \hline
    \(K_5\) & 2 & -5 & -10\\
    \(K_{3,3}\) & 2 & -3 & -6\\
    \(K_5\) & 3 & -5 & -30\\
    \(K_{2,4}\) & 3 & -4 & -24\\
    \(K_{3,3}\) & 4 & -3 & -72 
\end{tabular}
\caption{Euler characteristic of certain discretized graph configuration spaces.}
\label{fig:euler_characteristics}
\end{figure}

    \section{Pseudomanifolds} \label{section:pseudomanifolds}
In this section, we answer Question \ref{qst:graph-manifolds} by determining
exactly which discretized graph configuration spaces are \(m\)-dimensional pseudomanifolds (recall Definition \ref{defn:fernandes-pseudomanifold}) in Theorem \ref{thm:m-pseudomanifolds}, then
pointing out when these pseudomanifolds fail to be manifolds in Propositions \ref{prop:abrams-non-manifolds}, \ref{prop:remaining-non-manifolds1}, and \ref{prop:remaining-non-manifolds2}. Corollary \ref{cor:manifold-classification} then checks that the remaining spaces are indeed manifolds.

The key idea we use is stated in Lemma \ref{lem:special-edges}.
Essentially, if we want to know if a graph configuration space is an \(m\)-pseudomanifold,
then any \((m-1)\)-cell in our configuration space must be a face of exactly two \(m\)-cells.
In the language of placing particles on graphs, this roughly translates to
``for any collection of \((m-1)\) moving particles, the remaining \(n - (m-1)\) particles
placed on the graph should be able to move in a limited way.''
Our approach uses this idea translated into the language of matchings.

Note that if \(\mathcal{V}\) is a set of vertices in a graph \(\Gamma\), then the graph \(\Gamma - \mathcal{V}\)
refers to the subgraph of \(\Gamma\) with vertex set \(V(\Gamma) \setminus \mathcal{V}\)
and edge set \(\{e \in E(\Gamma) \mid \text{neither endpoint of \(e\) is in \(\mathcal{V}\)}\}\).
Also, if \(\mathcal{E}\) is a set of edges in \(\Gamma\), then the graph \(\Gamma - \mathcal{E}\)
refers to the subgraph of \(\Gamma\) with vertex set \(V(\Gamma)\)
and edge set \(E(\Gamma)\setminus\mathcal{E}\).
Figure \ref{fig:K5-minus-K2}
illustrates this notation.

\begin{figure}[h!]
    \centering
    \begin{tikzpicture}
        \begin{scope}[shift={(-3, 0)}]
        \def\radius{1.5}
        \foreach \i in {1,...,5} {
            \node[dormant_graph_node] (v\i) at ({(\i-1)*360/5}:\radius) {\(v_{\i}\)};
        }
        \foreach \i in {1,...,5} {
            \foreach \j in {\i,...,5} {
                \ifnum \i=\j
                \else
                    \draw (v\i) -- (v\j);
                \fi
            }
        } 
        \end{scope}

        \begin{scope}[shift={(3, 0)}]
        \def\radius{1.5}
        \foreach \i in {3,...,5} {
            \node[dormant_graph_node] (v\i) at ({(\i-1)*360/5}:\radius) {\(v_{\i}\)};
        }
        \foreach \i in {3,...,5} {
            \foreach \j in {\i,...,5} {
                \ifnum \i=\j
                \else
                    \draw (v\i) -- (v\j);
                \fi
            }
        } 
        \end{scope}

        \begin{scope}[shift={(9, 0)}]
        \def\radius{1.5}
        \foreach \i in {1,...,5} {
            \node[dormant_graph_node] (v\i) at ({(\i-1)*360/5}:\radius) {\(v_{\i}\)};
        }
        \foreach \i in {1,...,5} {
            \foreach \j in {\i,...,5} {
                \ifnum \i=\j
                \else
                    \ifnum \i=1
                        \ifnum \j=2
                        \else
                        \draw (v\i) -- (v\j);
                        \fi
                    \else
                        \draw (v\i) -- (v\j);
                    \fi
                \fi
            }
        } 
        \end{scope}

    \end{tikzpicture}
    \caption{\(\Gamma\) versus \(\Gamma - \{v_1, v_2\}\) versus \(\Gamma - \{v_1 v_2\}\)}
    \label{fig:K5-minus-K2}
\end{figure}

Before Definition \ref{defn:matching-set}, we remarked that the movement of \(k\) particles
on a graph \(\Gamma\) corresponds to a particular \(k\)-matching in \(\Gamma\).
So, if \((m-1)\) particles are moving, the remaining \(n - (m - 1)\) particles
can only move on the part of the graph away from the corresponding \((m-1)\)-matching \(M\).
That is,
if \(\DConf_n(\Gamma)\) is an \(m\)-pseudomanifold, then
for any \((m-1)\)-matching \(M\) in \(\Gamma\),
any collection \(\mathcal{V}\) of \(n - (m-1)\) particles placed on \(\Gamma - V(M)\) should be able to move in a limited way.

Taking this idea to its logical end, we completely characterize 
what \(\Gamma - V(M)\) can be in Lemmas 
\ref{lem:manifold-with-claw},
\ref{lem:manifold-with-cycle-and-isolated},
and \ref{lem:manifold-with-only-cycles}.
Once we know what subgraphs of \(\Gamma\) are possible, we show that no two of these subgraphs can
occur as \(\Gamma - V(M)\) simultaneously for different matchings in Theorem \ref{thm:exclusive-subgraphs}.
Knowing that \(\Gamma - V(M) \cong \Lambda\) for some fixed subgraph \(\Lambda\)
and any \((m-1)\)-matching \(M\), we can reconstruct what the original graph \(\Gamma\) must have been. Due to their length, we postpone these proofs until Section \ref{section:special-subgraphs}. 
These reconstruction arguments put the last puzzle piece in place necessary
to classify the pseudomanifolds in Theorem \ref{thm:m-pseudomanifolds}.

%
% ----- Input subsections -----
%

\subsection{Key Lemmas}
The first thing to observe about \(n\) and \(\Gamma\) after assuming \(\DConf_n(\Gamma)\) is an \(m\)-pseudomanifold is that there needs to be enough space in \(\Gamma\) for particles to move around.

\begin{lem}
\label{lem:graph-size}
    If \(\DConf_n(\Gamma)\) is a closed \(m\)-pseudomanifold, then
    \(\Gamma\) has at least \(m + n\) vertices and \(n \ge m\).
    Moreover, \(\Gamma\) contains at least one \(m\)-matching,
    and for any \((m-1)\)-matching \(M\) in \(\Gamma\)
    \[
        1 \le n - (m - 1) \le \abs{V(\Gamma - V(M))} - 1.
    \]
\end{lem}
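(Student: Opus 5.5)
The plan is to read the cell structure of \(\DConf_n(\Gamma)\) through matchings, as in Subsection \ref{subsection:euler}. A \(k\)-cell is a choice of a \(k\)-matching \(M\), an assignment of \(k\) of the particles to its edges, and an injective placement of the remaining \(n-k\) particles on vertices of \(\Gamma - V(M)\). The facial relation is also explicit. An \((m-1)\)-cell \(c\), with matching \(M\) and vertex positions \(\mathcal{V}\subseteq V(\Gamma - V(M))\), is a face of an \(m\)-cell exactly when one particle sitting at some \(v\in\mathcal{V}\) is released onto an edge \(vw\) with \(w\in V(\Gamma - V(M))\setminus\mathcal{V}\). All the other particles, on \(M\) or on vertices, stay put.

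So the number of \(m\)-cells containing \(c\) is the number of edges of \(\Gamma - V(M)\) joining \(\mathcal{V}\) to its complement. Being a closed \(m\)-pseudomanifold means this number is exactly \(2\) for every such \((M,\mathcal{V})\).

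With that dictionary the steps go as follows.

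\begin{enumerate}
\item Since \(X=\DConf_n(\Gamma)\) is \(m\)-dimensional, it has an \(m\)-cell. An \(m\)-cell needs \(m\) particles on \(m\) disjoint edges, so \(\Gamma\) has an \(m\)-matching and \(n\ge m\).
\item Let \(M\) be any \((m-1)\)-matching; one exists by deleting an edge of an \(m\)-matching. Suppose \(n-(m-1)\le \abs{V(\Gamma-V(M))}\), so that some \((m-1)\)-cell with matching \(M\) exists. Its set \(\mathcal{V}\) must have exactly two boundary edges in \(\Gamma - V(M)\). In particular \(\mathcal{V}\) is nonempty, giving \(n-(m-1)\ge 1\). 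Its complement is also nonempty, giving \(n-(m-1)\le \abs{V(\Gamma - V(M))}-1\).
\item Justify the hypothesis used in step 2. We have \(n\ge m\), and \(\abs{V(\Gamma - V(M))}=\abs{V(\Gamma)}-2(m-1)\). Apply step 2 first to an \(M\) contained in an \(m\)-cell. That \(m\)-cell has \(n-m\) particles on vertices of \(\Gamma - V(M')\), where \(M'\) is its \(m\)-matching. Pinning one of its moving particles to an endpoint of its edge produces an \((m-1)\)-cell with matching \(M\), so such cells exist.
\item Hence \(n-m+1\le \abs{V(\Gamma)}-2m+2-1\), which gives \(\abs{V(\Gamma)}\ge m+n\). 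The count \(\abs{V(\Gamma - V(M))}\) does not depend on the choice of \(M\), so the vertex-count condition holds for every \((m-1)\)-matching. Thus every \(M\) supports an \((m-1)\)-cell, and step 2 applies to all of them.
\end{enumerate}

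The main obstacle is stating the face relation precisely: an \((m-1)\)-cell lies in exactly those \(m\)-cells obtained by releasing one stationary particle along an edge away from the other occupied positions. One also has to make sure that there are no \((m-1)\)-cells with \(m\)-dimensional cofaces of any other form. This follows from the product cell structure on \(\abs{\Gamma}^n\), in which the faces of a product cell are products of faces.

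A lesser subtlety is the degenerate case where \(\DConf_n(\Gamma)\) has no \((m-1)\)-cells. This is excluded once \(m\ge 1\) and an \(m\)-cell exists, since any \(m\)-cell has \((m-1)\)-faces.
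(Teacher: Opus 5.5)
Your argument is correct, but it takes a longer route than the paper. The paper never uses the ``exactly two'' condition. It only uses that \(\DConf_n(\Gamma)\) has an \(m\)-cell, in which \(m\) particles move simultaneously to \(m\) distinct unoccupied vertices. That gives \(\abs{V(\Gamma)} - n \ge m\), together with \(n \ge m\) and the existence of an \(m\)-matching. The displayed inequality is then pure arithmetic for every \((m-1)\)-matching \(M\): \(\abs{V(\Gamma - V(M))} = \abs{V(\Gamma)} - 2(m-1) \ge n-(m-1)+1\).

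You instead first build the coface dictionary, which is essentially the forward direction of the paper's Lemma \ref{lem:special-edges}. You then extract the bound from the fact that the complement of \(\mathcal{V}\) is nonempty. In your step 3, however, that complement is nonempty for a trivial reason: it contains the free endpoint of the edge whose particle you pinned. So the pseudomanifold condition does no work there. The real content is the same vertex count \(2m + (n-m)\) read off an \(m\)-cell that the paper uses.

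Your detour does buy a careful justification, via the product cell structure, of a fact the paper's proof of Lemma \ref{lem:special-edges} treats only informally. The \(m\)-cells containing a given \((m-1)\)-cell are exactly those obtained by releasing one stationary particle along an edge of \(\Gamma - V(M)\) that leaves \(\mathcal{V}\). The paper's version is shorter, and it shows the lemma holds whenever \(\DConf_n(\Gamma)\) contains an \(m\)-cell, whether or not it is a pseudomanifold.
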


\begin{proof}
    If \(\Gamma\) has less than \(n\) vertices, then \(\DConf_n(\Gamma)\) is empty;
    so \(\Gamma\) has at least \(n\) vertices.
    If \(\Gamma\) has less than \(n + m\) vertices, then
    for any configuration of \(n\) particles on \(\Gamma\), there are
    at most \(m-1\) destinations for particles to move to. Since exactly \(m\) particles need to be able to move simultaneously for an \(m\)-cell to exist in \(\DConf_n(\Gamma)\), it follows that \(\Gamma\) must have at least \(n + m\) vertices.
    Similarly, if \(n < m\) or \(\Gamma\) contains no \(m\)-matchings, then there are not enough particles that can move simultaneously for an \(m\)-cube to exist in \(\DConf_n(\Gamma)\).

    Finally, suppose that \(M\) is an \((m-1)\)-matching in \(\Gamma\) and observe:
    \[
        \abs{V(\Gamma - V(M))} = \abs{V(\Gamma)} - 2(m-1) \ge (m + n) - 2(m-1) = n - (m - 1) + 1.
    \]
    Since \(n \ge m\), we have \(n - (m - 1) \ge 1\).    
\end{proof}

This next lemma is the key to all following proofs.
It is the matching translation of the \(m\)-pseudomanifold condition (see Definition \ref{defn:fernandes-pseudomanifold}) for \(\DConf_n(\Gamma)\).

\begin{lem}
    \label{lem:special-edges}
    \(\DConf_n(\Gamma)\) is an \(m\)-pseudomanifold if and only if
    for any collection \(\mathcal{V}\) of \(n - (m - 1)\) vertices in \(\Gamma - V(M)\), there exists exactly two
    edges \(e_1\) and \(e_2\) in \(\Gamma - V(M)\) that are incident to both vertices in \(\mathcal{V}\)
    and vertices not in \(\mathcal{V}\).

    Furthermore, these edges \(e_1\) and \(e_2\) satisfy all of the following conditions.
    \begin{enumerate}[label=(\roman*)]
    \item \(e_1\) and \(e_2\) are both incident to one vertex in \(\mathcal{V}\)
    \item \(e_1\) and \(e_2\) are both incident to one vertex not in \(\mathcal{V}\)
    \item \(e_1\) and \(e_2\) share exactly one common vertex.
    \end{enumerate}

    Equivalently, exactly one of the following holds in Figure \ref{fig:lem:special-edges}
    \begin{figure}[h!]
        \centering
        \begin{enumerate*}[label=(\arabic*)]
            \item \label{fig:lem:manifolds_1_1}
            \begin{minipage}{.3\textwidth}
                \centering
                \(v_1 \in \mathcal{V}\) \textit{and} \(w_1, w_2 \not \in \mathcal{V}\) \\
                \vspace{1em}
                \begin{tikzpicture}
                    \node (v1) at (3, 2) [circle, draw] {\(v_1\)};
                    \node (w1) at (2, 0) [circle, draw] {\(w_1\)};
                    \node (w2) at (4, 0) [circle, draw] {\(w_2\)};

                    \draw (v1) -- (w1) node[midway, left] {\(e_1\)};
                    \draw (v1) -- (w2) node[midway, right] {\(e_2\)};
                \end{tikzpicture} 
            \end{minipage}

            \hspace{3em}

            \item \label{fig:lem:manifolds_1_2}
            \begin{minipage}{.3\textwidth}
                \centering
                \(v_1, v_2 \in \mathcal{V}\) \textit{and} \(w_1 \not \in \mathcal{V}\) \\
                \vspace{1em}
                \begin{tikzpicture}
                    \node (v1) at (2, 2) [circle, draw] {\(v_1\)};
                    \node (v2) at (4, 2) [circle, draw] {\(v_2\)};
                    \node (w1) at (3, 0) [circle, draw] {\(w_1\)};

                    \draw (v1) -- (w1) node[midway, left] {\(e_1\)};
                    \draw (v2) -- (w1) node[midway, right] {\(e_2\)};
                \end{tikzpicture}
            \end{minipage}
        \end{enumerate*}
        \caption{Lemma \ref{lem:special-edges} possibilities.}
        \label{fig:lem:special-edges}
    \end{figure}
\end{lem}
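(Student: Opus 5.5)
We translate the pseudomanifold condition cell by cell into the language of matchings, implicitly quantifying over every \((m-1)\)-matching \(M\) in \(\Gamma\), as the statement intends. First we describe the cells. A cell of \(\DConf_n(\Gamma)\) is an \(n\)-tuple \((c_1,\dots,c_n)\) of cells of \(\Gamma\) (vertices or edges) whose closures are pairwise disjoint. Its dimension is the number of edges among the \(c_i\), and those edges form a matching. An \((m-1)\)-cell \(\sigma\) is therefore the same data as an \((m-1)\)-matching \(M\) (with each edge labeled by a particle), together with an injective placement of the remaining \(n-(m-1)\) particles onto a vertex set \(\mathcal{V}\) inside \(\Gamma - V(M)\). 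Disjointness from the edges of \(M\) forces \(\mathcal{V}\subseteq V(\Gamma-V(M))\), and distinct vertices are automatically disjoint. Conversely, every such choice of \(M\), \(\mathcal{V}\) and labeling gives an \((m-1)\)-cell.

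Next we count the \(m\)-cells having \(\sigma\) as a face. Such an \(m\)-cell is obtained from \(\sigma\) by replacing exactly one vertex coordinate \(c_i=v\), with \(v\in\mathcal{V}\), by an edge \(e\) having \(v\) as an endpoint. The closure of \(e\) must avoid the closures of all other coordinates. That is, its other endpoint \(w\) must lie outside \(V(M)\) and outside \(\mathcal{V}\setminus\{v\}\); it cannot be \(v\) since \(\Gamma\) has no loops. So the cofaces of \(\sigma\) are in bijection with the edges of \(\Gamma-V(M)\) having exactly one endpoint in \(\mathcal{V}\). The particle labels play no role: the edge determines which particle moves. Hence \(\sigma\) is a face of exactly two \(m\)-cells if and only if exactly two such edges exist. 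This is the ``edges incident to both vertices in \(\mathcal{V}\) and vertices not in \(\mathcal{V}\)'' condition.

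For the main equivalence we also need \(\DConf_n(\Gamma)\) to be genuinely \(m\)-dimensional. If every \((m-1)\)-cell has two cofaces, then the complex has \(m\)-cells whenever it has \((m-1)\)-cells. One should also confirm that no \((m+1)\)-cells exist; this is implicit in reading Definition \ref{defn:fernandes-pseudomanifold} for an \(m\)-dimensional complex, and I would note it as part of the hypothesis rather than derive it from the edge condition. This is the step I expect to need the most care: making precise which quantifiers the statement intends, and handling the degenerate case where \(\Gamma\) has no \((m-1)\)-matchings, where Lemma \ref{lem:graph-size} helps.

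Finally, the refinements (i)--(iii) come from varying \(\mathcal{V}\), which sidesteps the case analysis. Suppose the two edges were disjoint, say \(e_1=v_1w_1\) and \(e_2=v_2w_2\) with \(v_i\in\mathcal{V}\), \(w_i\notin\mathcal{V}\), all four distinct. Replace \(\mathcal{V}\) by \(\mathcal{V}'=(\mathcal{V}\setminus\{v_1\})\cup\{w_1\}\). The edge \(e_2\) still crosses the boundary of \(\mathcal{V}'\), and so does every edge at \(v_1\) or \(w_1\) other than \(e_1\) that now crosses. I would instead derive the contradiction from the fact that \(e_1\) is the only crossing edge at \(v_1\) and at \(w_1\) apart from edges internal to \(\mathcal{V}\) or its complement. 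Counting crossing edges for \(\mathcal{V}'\) then forces the crossing edges at \(w_1\) and \(v_1\) to pair up, and I expect this forces a shared vertex. Once the edges share a vertex, a shared vertex in \(\mathcal{V}\) gives picture (1) and a shared vertex outside \(\mathcal{V}\) gives picture (2). These cases are exclusive, and (iii) holds because the graph is simple, so \(e_1\neq e_2\) cannot share both endpoints. If the swapping argument proves stubborn, I would fall back on examining \(\mathcal{V}\cup\{w_1\}\) minus a vertex of \(\mathcal{V}\) far from \(e_1,e_2\), using \(n-(m-1)\ge1\) from Lemma \ref{lem:graph-size}.
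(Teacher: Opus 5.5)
Your description of the cells, and the bijection between the \(m\)-cells containing a given \((m-1)\)-cell and the edges of \(\Gamma - V(M)\) with exactly one endpoint in \(\mathcal{V}\), are correct and more careful than the paper's. The gap is in the ``furthermore'' part, in particular (iii), which is what the later lemmas actually use. You never show that the two crossing edges share a vertex, and the one concrete step you describe does not yield a contradiction. Suppose \(e_1 = v_1w_1\) and \(e_2 = v_2w_2\) are disjoint. For \(\mathcal{V}' = (\mathcal{V}\setminus\{v_1\})\cup\{w_1\}\), both \(e_1\) and \(e_2\) still cross. So the count only forces \(v_1\) and \(w_1\) to have no neighbors in \(\Gamma - V(M)\) other than each other. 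Nothing ``pairs up.'' Your fallback swap, which removes a vertex \(z\in\mathcal{V}\) far from \(e_1,e_2\), only constrains the degree of \(z\). The sentence ``I expect this forces a shared vertex'' is exactly the missing argument.

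There are two ways to close the gap.

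\emph{The paper's route.} In the only-if direction you already assume there are no \((m+1)\)-cells. Disjoint \(e_1,e_2\) would produce one: the cell with edge coordinates \(M\cup\{e_1,e_2\}\) and vertex coordinates \(\mathcal{V}\setminus\{v_1,v_2\}\) has pairwise disjoint closures. Simplicity of \(\Gamma\) then rules out \(v_1=v_2\) and \(w_1=w_2\) holding together.

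\emph{Finishing your swap argument.} First do the same swap for \(e_2\), so that \(v_1,w_1,v_2,w_2\) all have degree \(1\) in \(\Gamma - V(M)\). Then take \(\mathcal{V}'' = (\mathcal{V}\setminus\{v_2\})\cup\{w_1\}\). Neither \(e_1\) nor \(e_2\) crosses \(\mathcal{V}''\). Every other edge avoids \(v_2\) and \(w_1\), so it crosses \(\mathcal{V}''\) if and only if it crosses \(\mathcal{V}\), which it never does. Hence \(\mathcal{V}''\) has no crossing edges, a contradiction.

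The second route buys something extra: it derives (i)--(iii) from ``exactly two'' alone. Then the if-direction needs no added hypothesis. Take any \((m+1)\)-cell and freeze two of its moving particles at endpoints \(v_1,v_2\) of their edges. The result is an \((m-1)\)-cell with two disjoint crossing edges, which is impossible. So you should not add ``no \((m+1)\)-cells'' to the hypothesis as you proposed; it follows.
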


\begin{proof}
    We first show the ``only if'' part of Lemma's statement.
    Suppose \(\DConf_n(\Gamma)\) is an \(m\)-pseudomanifold.
    Let \(\mathcal{V}\) be a collection of \(n - (m - 1)\) vertices in \(\Gamma - V(M)\)
    and \(p\) be the configuration of \(n\) particles placed on \(\Gamma\) such that
    \(n - (m - 1)\) particles are placed at each vertex in \(\mathcal{V}\)
    and \(m - 1\) particles on each edge in \(M\).
    
    As the particles moves along the edges in \(M\), an \((m-1)\)-cube containing \(p\)
    is spanned in \(\DConf_n(\Gamma)\).
    Since \(\DConf_n(\Gamma)\) is an \(m\)-pseudomanifold,
    this \((m-1)\)-cube must border exactly two distinct \(m\)-cubes.

    For this \((m-1)\)-cube to border two \(m\)-cubes,
    there needs to exist \textbf{two} additional mutually exclusive particle movements.
    Let \(v_1, v_2 \in \mathcal{V}\) and \(w_1, w_2 \not \in \mathcal{V}\) be the vertices corresponding
    to the origins and destinations of these additional particle movements respectively.
    If \(v_1 \neq v_2\) and \(w_1 \neq w_2\), then two particles would be able to move simultaneously
    resulting in an \((m+1)\)-cube being spanned in \(\DConf_n(\Gamma)\) contradicting 
    that it is an \(m\)-pseudomanifold.
    Since \(\Gamma\) is simple, it cannot be that \(v_1 = v_2\) and \(w_1 = w_2\).

    To see the ``if'' part of the statement. Notice that if \((m-1)\) particles are traveling along
    an \((m-1)\)-matching \(M\), then we have an \((m-1)\)-cube in \(\DConf_n(\Gamma)\).
    By restricting the remaining \(n - (m-1)\) particles to move in exactly two ways,
    there are exactly two \(m\)-cubes sharing this \((m-1)\)-cube as a face.
    Since \(M\) and the remaining \(n - (m-1)\) particles were arbitrary,
    \(\DConf_n(\Gamma)\) must be an \(m\)-pseudomanifold.
\end{proof}

From this point forward in this section we assume that \(\DConf_n(\Gamma)\) is a closed \(m\)-pseudomanifold
and \(M\) is an \((m-1)\)-matching in \(\Gamma\).
Note that if \(m = 1\), then \(\Gamma - V(M) = \Gamma\).
The next few lemmas provide significant restrictions
for what \(\Gamma - V(M)\) can be.

\begin{lem}
    \label{lem:manifold-max-degree}
    Every vertex in \(\Gamma - V(M)\) has degree at most \(3\).
\end{lem}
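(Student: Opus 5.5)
The plan is to argue by contradiction using Lemma \ref{lem:special-edges}, with a suitable choice of the vertex collection \(\mathcal{V}\). Suppose some vertex \(v\) of \(\Gamma - V(M)\) has at least four neighbors \(w_1, w_2, w_3, w_4\) in \(\Gamma - V(M)\). Write \(k = n - (m-1)\). By Lemma \ref{lem:graph-size}, \(1 \le k \le \abs{V(\Gamma - V(M))} - 1\), so \(\Gamma - V(M)\) has at least \(k+1\) vertices. We then build a \(k\)-set \(\mathcal{V}\) for which the number of edges of \(\Gamma - V(M)\) crossing between \(\mathcal{V}\) and its complement is not exactly two.

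The natural first choice is \(\mathcal{V}\) containing \(v\) but none of the \(w_i\). Then \(vw_1,\dots,vw_4\) are four distinct crossing edges, so there are more than two, contradicting Lemma \ref{lem:special-edges}. This requires \(k \le \abs{V(\Gamma - V(M))} - 4\), so that \(k-1\) vertices other than \(v\) can be chosen outside \(\{w_1,\dots,w_4\}\).

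When \(k\) is large this fails, and I would switch to the complementary choice. Take \(\mathcal{V}\) to contain all the \(w_i\) but not \(v\). Then the same four edges cross, now from outside to inside. This needs \(4 \le k\), since \(\mathcal{V}\) must contain the four neighbors and still have room, and \(k \le \abs{V(\Gamma-V(M))}-1\), which holds by Lemma \ref{lem:graph-size}, since \(v\) must remain outside. Put \(N = \abs{V(\Gamma - V(M))}\). The two choices together cover every \(k\) with \(k \le N-4\) or \(k \ge 4\). Since \(N \ge 5\), the only uncovered values would need \(N - 3 \le k \le 3\), so \(N \le 6\).

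The main obstacle is this leftover range of small \(N\) and \(k\) where neither choice is possible. In those cases, put \(j\) of the \(w_i\) inside \(\mathcal{V}\) together with \(v\), where \(j = k-1\), and leave the others out. The crossing edges then include \(v w_i\) for each of the \(4-j\) neighbors \(w_i\) outside \(\mathcal{V}\). We also need to account for edges from the \(j\) chosen neighbors to outside vertices. If \(4-j \ge 3\), we are immediately done.

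Otherwise, I would put \(v\) outside and \(k\) of the \(w_i\) inside, giving \(k\) crossing edges \(vw_i\). That settles \(k=3\). For \(k=2\), with \(v\) outside and two neighbors inside, the two crossing edges \(vw_1, vw_2\) share \(v\), which is consistent with configuration (2). So instead put \(v\) and \(w_1\) inside, giving crossing edges \(vw_2, vw_3, vw_4\): three crossings, a contradiction. For \(k=1\), take \(\mathcal{V}=\{v\}\), which has four crossings.

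So every \(k\) is handled by either taking \(v\) inside with \(k-1\) neighbors, leaving at least three neighbors outside (valid for \(k\le 2\)), or taking \(v\) outside with \(k\) neighbors inside (valid for \(3 \le k \le 4\)), combined with the general choices above. The case analysis must check that enough non-neighbor vertices exist to fill \(\mathcal{V}\) in each case. This bookkeeping is where I expect the care to be needed.
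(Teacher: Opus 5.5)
Your proof is correct, and its core move is the paper's. The move is to put neighbors of \(v\) into \(\mathcal{V}\) and leave \(v\) out, so that every edge \(vw_i\) crosses. The paper uses it in its Case 2 (\(k > \deg(v)\), where \(\mathcal{V}\) ends up containing all neighbors of \(v\) but not \(v\)) and in its Case 1 (\(k \le \deg(v)\), where \(\mathcal{V}\) consists of \(k\) neighbors of \(v\)).

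The difference is in the small cases. The paper splits on \(k\) versus \(\deg(v)\), and in Case 1 it always leaves \(v\) outside. That yields only \(k\) crossing edges at \(v\), so it need not give a contradiction when \(k \le 2\). For example, if \(\Gamma - V(M) \cong K_{1,4}\) and \(k = 2\), every pair of leaves has exactly two crossing edges meeting at \(v\), which is configuration (2). Your choice \(\mathcal{V} = \{v\}\) or \(\mathcal{V} = \{v, w_1\}\) for \(k \le 2\) is what actually closes these cases, so your version is the more careful one.

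You can also drop the detour through the choice with \(v\) inside and all \(w_i\) outside, along with the resulting \(N \le 6\) analysis. The trichotomy \(k \le 2\), \(k = 3\), \(k \ge 4\) works for every \(N \ge 5\). The only bookkeeping needed is filling \(\mathcal{V}\) with \(k-4\) of the \(N-5\) remaining vertices when \(k \ge 4\), which is exactly the bound \(k \le N-1\) from Lemma \ref{lem:graph-size}.
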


\begin{proof}
    Suppose \(\Gamma - V(M)\) has a vertex \(v\) with degree greater than \(3\).
    We proceed by cases on \(n\).

    \textbf{Case 1:} \(n - (m - 1) \le \deg(v)\).
    In the context of Lemma \ref{lem:special-edges},
    construct \(\mathcal{V}\) so that all \(n - (m - 1)\)
    vertices are adjacent to \(v\).
    Then, there are too many edges with one endpoint in \(\mathcal{V}\)
    and the other outside of \(\mathcal{V}\) for Lemma \ref{lem:special-edges} to hold.

    \textbf{Case 2:} \(n - (m - 1) > \deg(v)\).
    Lemma \ref{lem:graph-size} guarantees that \(\Gamma - V(M)\) has at least
    \(n - m + 2 > \deg(v) + 1\) vertices.
    Let \(\mathcal{V}\) be a set containing \(v\) and every vertex adjacent to \(v\).
    Lemma \ref{lem:special-edges} guarantees there is at least one vertex \(w \not \in \mathcal{V}\).
    Let \(\mathcal{V}' = \left(\mathcal{V}\setminus\{v\}\right)\cup\{w\}\).
    Similarly to the previous case, there are too many edges with one endpoint in and outside of \(\mathcal{V}'\) for Lemma \ref{lem:special-edges} to hold.
\end{proof}

\begin{lem}
\label{lem:max-subgraph-matching}
If \(n > m\) and \(\Gamma - V(M)\) contains two or more disjoint edges, then \(\Gamma - V(M)\) has exactly \(n - (m - 1) + 1\) vertices.
\end{lem}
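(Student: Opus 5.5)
Write \(\Lambda = \Gamma - V(M)\) and \(r = n - (m-1)\). Since \(n > m\), we have \(r \ge 2\). Lemma \ref{lem:graph-size} already gives \(\abs{V(\Lambda)} \ge r+1\). So the plan is to assume \(\abs{V(\Lambda)} \ge r+2\) and contradict the edge count in Lemma \ref{lem:special-edges}, using two disjoint edges \(ab\) and \(cd\) of \(\Lambda\).

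The idea is to choose \(\mathcal{V}\) with \(\abs{\mathcal{V}} = r\) so that at least two vertices of \(\mathcal{V}\) and at least two vertices outside \(\mathcal{V}\) each have a crossing edge. Lemma \ref{lem:special-edges}(i)--(iii) forbids this, since the crossing edges must share a common endpoint.

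\textbf{First try.} Put \(a \in \mathcal{V}\), \(b \notin \mathcal{V}\), \(c \in \mathcal{V}\), \(d \notin \mathcal{V}\), and fill the remaining \(r-2\) slots of \(\mathcal{V}\) with other vertices of \(\Lambda\). This needs \(r-2 \le \abs{V(\Lambda)} - 4\), i.e.\ \(\abs{V(\Lambda)} \ge r+2\), which is exactly our assumption. Then \(ab\) and \(cd\) are both crossing edges, and they are disjoint. This violates (iii) of Lemma \ref{lem:special-edges} directly: either there are more than two crossing edges, or the two crossing edges are \(ab\) and \(cd\), which share no vertex. Either way we have a contradiction.

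The expected obstacle is confirming that Lemma \ref{lem:special-edges} really applies to this \(\mathcal{V}\). The lemma quantifies over every \((m-1)\)-matching \(M\) and every \(r\)-subset of \(V(\Lambda)\). The configuration with particles on \(\mathcal{V}\) and on the edges of \(M\) is valid, because \(\mathcal{V}\) avoids \(V(M)\) and particles at distinct vertices may be adjacent. In the discretized model, particles sitting at vertices may be adjacent; only moving particles need clearance. Concretely, a crossing edge \(e\) gives a valid move exactly when its endpoint outside \(\mathcal{V}\) is unoccupied and not incident to \(M\), and this holds here.

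One subtlety is that two vertex particles joined by an edge of \(\Gamma\) cannot simultaneously move. That is fine, because we only need \(ab\) and \(cd\) as individual moves: they are disjoint, so they could even move simultaneously. That would produce an \((m+1)\)-cube, which is again the contradiction used in the proof of Lemma \ref{lem:special-edges}.

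Hence \(\abs{V(\Lambda)} \le r+1\), and with the lower bound \(\abs{V(\Lambda)} \ge r+1\) we get \(\abs{V(\Lambda)} = n-(m-1)+1\). The hypothesis \(n > m\) is used exactly to ensure \(r \ge 2\), so that \(\mathcal{V}\) can contain both \(a\) and \(c\).
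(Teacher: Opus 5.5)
Your proof is correct and is essentially the paper's argument: assuming \(\Gamma - V(M)\) has at least \(n-(m-1)+2\) vertices, you choose \(\mathcal{V}\) to contain one endpoint of each of the two disjoint edges and to exclude the other endpoints, so two vertex-disjoint crossing edges contradict Lemma \ref{lem:special-edges}, with \(n>m\) guaranteeing \(\abs{\mathcal{V}} \ge 2\). (Your aside that particles at adjacent vertices cannot move simultaneously is inaccurate in Abrams' model, since particles at adjacent \(u,v\) can move along \(ux\) and \(vy\) whenever \(\{u,x\}\cap\{v,y\}=\emptyset\), but the argument never uses it.)
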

\begin{proof}
    Suppose \(v_1 v_2\) and \(v_3 v_4\) are two disjoint
    edges in \(\Gamma - V(M)\). By
    Lemma \ref{lem:graph-size}, 
    \(\Gamma - V(M)\) has at least \(n - (m - 1) + 1\) vertices, so suppose for the sake of contradiction that  \(\Gamma - V(M)\) has \(n - (m - 1) + 2\) or more vertices.

    Let \(\mathcal{V}\) be a collection of \(n - (m - 1)\)
    vertices in \(\Gamma - V(M)\) including \(v_1\)
    and \(v_3\), but excluding \(v_2\) and \(v_4\).
    Then, the edges \(v_1 v_2\) and \(v_3 v_4\)
    both have endpoints inside and outside of \(\mathcal{V}\),
    yet these edges do not share a common endpoint contradicting Lemma \ref{lem:special-edges}.
\end{proof}

\begin{lem}
\label{lem:easy-only-cycles}
If \(n - (m - 1) \in \{1, \abs{V(\Gamma - V(M))} - 1\}\),
then every vertex in \(\Gamma - V(M)\) has degree 2.
\end{lem}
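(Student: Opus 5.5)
Write \(\Lambda = \Gamma - V(M)\) and \(N = \abs{V(\Lambda)}\). The plan is to apply Lemma \ref{lem:special-edges} directly in the two extreme cases and to link them by complementation. The key observation is that the condition in Lemma \ref{lem:special-edges} is symmetric under replacing \(\mathcal{V}\) by \(V(\Lambda)\setminus\mathcal{V}\). An edge of \(\Lambda\) has exactly one endpoint in \(\mathcal{V}\) if and only if it has exactly one endpoint in the complement. So when \(n-(m-1) = N-1\), every allowed \(\mathcal{V}\) is the complement of a single vertex \(w\), and the edges crossing between \(\mathcal{V}\) and its complement are exactly the edges of \(\Lambda\) incident to \(w\). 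The same holds when \(n-(m-1)=1\) and \(\mathcal{V}=\{w\}\). In both cases, then, the crossing edges for a given choice are precisely the edges of \(\Lambda\) at one vertex \(w\), and as \(\mathcal{V}\) ranges over all allowed collections, \(w\) ranges over all of \(V(\Lambda)\).

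Having made this reduction, I invoke Lemma \ref{lem:special-edges}. For every \(w\in V(\Lambda)\) there are exactly two edges of \(\Lambda\) crossing between \(\mathcal{V}\) and its complement, and these are exactly the edges incident to \(w\). Hence \(\deg_\Lambda(w)=2\). I should also check that the two edges are compatible with conditions (i)--(iii). In the case \(\mathcal{V}=\{w\}\), both edges meet \(w\in\mathcal{V}\) and go to two distinct vertices outside \(\mathcal{V}\); this is picture (1), and the two endpoints are distinct because \(\Gamma\) is simple. The case \(\mathcal{V}=V(\Lambda)\setminus\{w\}\) gives picture (2) symmetrically. There is no contradiction, so the degree is exactly \(2\).

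The only real subtlety is making sure the quantifier in Lemma \ref{lem:special-edges} is being used correctly. The lemma is stated for an arbitrary \((m-1)\)-matching \(M\) and an arbitrary collection \(\mathcal{V}\) of the right size, and the standing assumption that \(\DConf_n(\Gamma)\) is a closed \(m\)-pseudomanifold lets us apply it to every such \(\mathcal{V}\). I also need \(N\ge 2\), so that both \(\mathcal{V}\) and its complement are nonempty; Lemma \ref{lem:graph-size} gives \(1\le n-(m-1)\le N-1\), which guarantees this. The case \(N=2\), where the two hypotheses \(n-(m-1)=1\) and \(n-(m-1)=N-1\) coincide, causes no trouble. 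In that case a vertex can have degree at most \(1\), so the pseudomanifold condition could never hold, and the conclusion is vacuously consistent: the hypothesis is simply contradictory there. Once this bookkeeping is done, the argument is just the count above.
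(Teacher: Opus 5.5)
Your proof is correct and is essentially the paper's argument. You take \(\mathcal{V}=\{v\}\) or \(\mathcal{V}=V(\Gamma - V(M))\setminus\{v\}\), observe that the crossing edges are exactly the edges of \(\Gamma - V(M)\) incident to \(v\), and apply Lemma \ref{lem:special-edges} to conclude \(\deg v = 2\); your extra checks of conditions (i)--(iii) and of the \(N=2\) case are harmless but not needed.
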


\begin{proof}
Let \(v\) be some vertex in \(\Gamma - V(M)\).
If \(n - (m - 1) = 1\),  let \(\mathcal{V} = \{v\}\),
otherwise if \(n - (m - 1) = \abs{V(\Gamma - V(M))} - 1\),
let \(\mathcal{V} = V(\Gamma - V(M))\setminus\{v\}\).

In either case, Lemma \ref{lem:special-edges} guarantees that
there must be exactly two edges in \(\Gamma - V(M)\) with endpoints inside and
outside of \(\mathcal{V}\). Since these two edges
will be the only edges incident to \(v\), and since \(v\)
was arbitrary, every vertex in \(\Gamma - V(M)\) has degree 2.
\end{proof}

\subsection{Possible Subgraphs}
We now have the tools necessary to determine what graph \(\Gamma - V(M)\) can be. Lemmas \ref{lem:manifold-with-claw}, \ref{lem:manifold-with-cycle-and-isolated}, and \ref{lem:manifold-with-only-cycles} below do this classification.

\begin{lem}
    \label{lem:manifold-with-claw}
    If \(\Gamma - V(M)\) contains a degree \(3\) vertex,
    then \(\Gamma - V(M) \cong K_{1,3}\) and \(n = m + 1\).
\end{lem}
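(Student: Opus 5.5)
Throughout, write \(H = \Gamma - V(M)\), \(N = \abs{V(H)}\) and \(k = n - (m-1)\). The plan is to use Lemma \ref{lem:special-edges} repeatedly: for each \(k\)-subset \(\mathcal{V} \subseteq V(H)\), exactly two edges of \(H\) cross between \(\mathcal{V}\) and its complement. First, since \(H\) has a vertex \(v\) of degree \(3\), Lemma \ref{lem:easy-only-cycles} rules out \(k \in \{1, N-1\}\). Together with Lemma \ref{lem:graph-size} this gives \(2 \le k \le N-2\). In particular \(n > m\) and \(N \ge k+2\). Let \(a, b, c\) be the three neighbours of \(v\).

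\textbf{Step 1: \(N = k+2\).} Suppose instead \(N \ge k+3\). Then there are at least \(N - 4 \ge k-1\) vertices other than \(v, a, b, c\). Take \(\mathcal{V}\) to be \(v\) together with \(k-1\) of these vertices. Then \(va\), \(vb\) and \(vc\) all cross between \(\mathcal{V}\) and its complement. That is three crossing edges, contradicting Lemma \ref{lem:special-edges}. Hence \(N = k+2\).

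\textbf{Step 2: \(H\) is a star at \(v\) plus isolated vertices.} Since \(n > m\) and \(N \neq k+1\), Lemma \ref{lem:max-subgraph-matching} shows that \(H\) has no two disjoint edges. A graph with no two disjoint edges has all its edges forming either a triangle or a star. A triangle has no vertex of degree \(3\), so every edge of \(H\) passes through \(v\). By Lemma \ref{lem:manifold-max-degree}, \(\deg(v) \le 3\), so the edges of \(H\) are exactly \(va, vb, vc\). Thus \(H\) is \(K_{1,3}\) together with \(N - 4 = k-2\) isolated vertices.

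\textbf{Step 3: \(k = 2\).} Suppose \(k \ge 3\). Let \(\mathcal{V}\) consist of \(a, b, c\) and \(k-3\) of the \(k-2\) isolated vertices, which is possible. Then \(v \notin \mathcal{V}\), so all three edges \(va, vb, vc\) cross, again contradicting Lemma \ref{lem:special-edges}. Hence \(k = 2\), so \(n = m+1\), \(N = 4\), and there are no isolated vertices, giving \(H \cong K_{1,3}\).

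The main obstacle is Step 2, the passage from ``no two disjoint edges'' to ``star at \(v\)''. The clean way is the standard fact that a graph whose edges pairwise intersect is a star or a triangle. This is quick here: any edge not through \(v\) must meet each of \(va, vb, vc\), so it must contain \(a\), \(b\) and \(c\), which is impossible for an edge. The other steps are direct choices of \(\mathcal{V}\) producing three crossing edges.
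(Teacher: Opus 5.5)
Your proof is correct, but it is organized differently from the paper's. The paper settles \(k = n-(m-1) = 2\) first. For \(k = 3\) it takes \(\mathcal{V} = \{a,b,c\}\), for \(k > 3\) a set containing \(a,b,c\) but not \(v\), and it excludes \(k = 1\) by Lemma \ref{lem:easy-only-cycles}. Once \(k = 2\) is known, the remaining checks are two-element choices: \(\mathcal{V} = \{v,w\}\) for a non-neighbour \(w\) of \(v\), and \(\mathcal{V} = \{a,c\}\) if \(ab\) were an edge. Each produces three crossing edges.

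You instead first pin down \(N = k+2\); your Step 1 is the general-\(k\) version of the paper's \(\{v,w\}\) choice. You then let Lemma \ref{lem:max-subgraph-matching}, together with the star/triangle observation, remove every extraneous edge at once. Only at the end do you force \(k = 2\). The paper's route uses only explicit choices of \(\mathcal{V}\) in Lemma \ref{lem:special-edges} (plus Lemma \ref{lem:easy-only-cycles} for \(k=1\)) and is slightly shorter. Yours reuses an existing lemma to get the edge structure wholesale. The price is that you also need the exclusion of \(k = N-1\) from Lemma \ref{lem:easy-only-cycles}, to get \(N \neq k+1\) before applying Lemma \ref{lem:max-subgraph-matching}.

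Your Step 3 does not actually depend on Steps 1 and 2. A \(k\)-set containing \(a,b,c\) and avoiding \(v\) exists whenever \(N \ge k+1\), which is guaranteed by Lemma \ref{lem:graph-size}; this is exactly how the paper uses it, so it could have come first. Also, in Step 2 the appeal to Lemma \ref{lem:manifold-max-degree} is unnecessary, since \(\deg v = 3\) is the hypothesis.
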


\begin{proof}
    Suppose \(\Gamma - V(M)\) contains a degree \(3\) vertex \(v\).
    Observe that \(n = m + 1\) if and only if \(n - (m - 1) = 2\).
    We first show that \(n\) must be equal to \(m + 1\) by considering two cases.

    \textbf{Case 1:} \(n - (m - 1) > 2\).
    If \(n - (m - 1) = 3\), then construct \(\mathcal{V}\) to include just the neighbors of \(v\).
    Then, there are too many edges with one endpoint inside and one outside of \(\mathcal{V}\)
    for Lemma \ref{lem:special-edges} to hold.
    Otherwise, if \(n - (m-1) > 3\), then let \(\mathcal{V}\) be a collection of \(n - (m - 1)\) vertices
    that includes \(v\) and its three neighbors.
    By Lemma \ref{lem:special-edges}, there must be at least one additional vertex
    \(w\) outside of \(\mathcal{V}\) connected to one of the neighbors of \(v\).
    Let \(\mathcal{V}' = (\mathcal{V}\setminus\{v\})\cup\{w\}\).
    Again, there are too many edges with one endpoint inside and one outside of \(\mathcal{V}'\)
    for Lemma \ref{lem:special-edges} to hold.

    \textbf{Case 2:} \(n - (m - 1) < 2\).
    Since Lemma \ref{lem:graph-size} guarantees
    \(n \ge m\), we must have that \(n - (m - 1) = 1\).
    Since \(v\) is degree 3, we contradict Lemma \ref{lem:easy-only-cycles}.

    Next, we show that there cannot be any other vertices in \(\Gamma - V(M)\) besides \(v\) and its neighbors.
    Suppose there was a vertex \(w \neq v\) in \(\Gamma - V(M)\) that is not adjacent to \(v\).
    By choosing \(\mathcal{V} = \{v, w\}\),
    the three neighbors of \(v\) again contradict Lemma \ref{lem:special-edges}.

    Finally, we show that there cannot be any edges in \(\Gamma - V(M)\)
    besides the three edges incident to \(v\).
    Suppose there was another edge \(e\) in \(\Gamma - V(M)\) not incident to \(v\).
    Let \(a\) and \(b\) be the vertices incident to \(e\) and \(c\) be the other vertex adjacent to \(v\)
    (see Figure \ref{fig:lem:manifold-with-claw:1}).
    By choosing \(\mathcal{V} = \{a, c\}\), the edges \(cv\), \(av\), and \(ab\) contradict Lemma \ref{lem:special-edges}.

    \begin{figure}[h!]
        \centering
        \begin{tikzpicture}
            \node (v) at (0,0) [circle, draw] {\(v\)};
            \node (a) at (-1,-1) [circle, draw] {\(a\)};
            \node (b) at (1,-1) [circle, draw] {\(b\)};
            \node (c) at (0,1.4) [circle, draw] {\(c\)};

            \draw (v) -- (a);
            \draw (v) -- (b);
            \draw (v) -- (c);
            \draw (a) -- (b) node[midway, below] {\(e\)};
        \end{tikzpicture}
        \caption{\(\Gamma - V(M)\) with an extra edge \(e\)}
        \label{fig:lem:manifold-with-claw:1}
    \end{figure}

    Therefore \(\Gamma - V(M)\) must consist solely of \(v\), its neighbors, and the edges incident to \(v\)
    i.e.\ \(\Gamma - V(M) \cong K_{1,3}\).
\end{proof}

\begin{lem}
\label{lem:manifold-with-cycle-and-isolated}
    If \(\Gamma - V(M)\) contains no degree 3 vertices and
    \(n - (m - 1) \not \in \{1, \abs{V(\Gamma - V(M))} - 1\}\),
    then \(\Gamma - V(M) \cong K_3 \sqcup K_1\)
    and \(n = m + 1\).
\end{lem}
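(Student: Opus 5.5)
Write \(H = \Gamma - V(M)\), \(N = \abs{V(H)}\) and \(r = n - (m-1)\). By Lemma \ref{lem:graph-size} we have \(1 \le r \le N-1\). The hypothesis excludes \(r = 1\) and \(r = N-1\), so \(2 \le r \le N-2\), and in particular \(N \ge 4\). By Lemma \ref{lem:manifold-max-degree} and the hypothesis, every vertex of \(H\) has degree \(0\), \(1\) or \(2\). Hence \(H\) is a disjoint union of paths, cycles and isolated vertices.

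\textbf{Step 1: \(H\) has at most one nontrivial component, of at most three vertices.} If \(H\) contained two disjoint edges, then \(n > m\) (since \(r \ge 2\)), and Lemma \ref{lem:max-subgraph-matching} would give \(N = r+1\), i.e.\ \(r = N-1\), which is excluded. So \(H\) has matching number \(1\). \(H\) also has at least one edge: if \(H\) were edgeless, no \(\mathcal{V}\) would see any crossing edge, contradicting Lemma \ref{lem:special-edges}. A graph of maximum degree \(2\) with matching number \(1\) has edge set forming either a single \(K_3\) or a path on \(2\) or \(3\) vertices. So \(H\) is one of \(K_3\), \(P_2\) or \(P_3\), together with isolated vertices.

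\textbf{Step 2: rule out paths and count isolated vertices.} Suppose the nontrivial component is \(P_2 = ab\). Choose \(\mathcal{V}\) containing \(a\) but not \(b\); this is possible because \(1 \le r \le N-1\). Then exactly one crossing edge exists, contradicting Lemma \ref{lem:special-edges}. Suppose it is \(P_3 = abc\). Choose \(\mathcal{V}\) containing \(a\) but not \(b\) or \(c\); this is possible because \(r \le N-2\). Again there is exactly one crossing edge, \(ab\). So the component is \(K_3\) on \(\{a,b,c\}\), and the isolated set \(I\) has size \(N-3 \ge 1\). If \(\abs{I} \ge 2\), pick \(\mathcal{V}\) to contain none of \(a,b,c\) when \(r \le N-3 = \abs{I}\), or all of \(a,b,c\) when \(r \ge 3\). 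One of these holds because \(N-3 \ge 2\) or \(r \ge 3\) covers every \(2 \le r \le N-2\) once \(N \ge 5\). In either case there are no crossing edges, a contradiction. Hence \(\abs{I} = 1\), \(N = 4\), and \(H \cong K_3 \sqcup K_1\).

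\textbf{Step 3: conclude \(r = 2\).} From \(2 \le r \le N-2 = 2\) we get \(r = 2\), i.e.\ \(n = m+1\). This matches Lemma \ref{lem:special-edges}: any \(\mathcal{V}\) of size \(2\) meets the triangle in one or two vertices and sees exactly two crossing edges.

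The main obstacle I expect is the case bookkeeping in Step 2. There I must make sure the required \(\mathcal{V}\) actually exists for every admissible \(r\); the inequalities \(2 \le r \le N-2\) are exactly what make these choices possible.
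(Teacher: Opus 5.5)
Your proof is correct. It uses the same three ingredients as the paper: the degree bound from Lemma~\ref{lem:manifold-max-degree}, Lemma~\ref{lem:max-subgraph-matching} together with the exclusion \(r \neq N-1\) to forbid two disjoint edges, and targeted choices of \(\mathcal{V}\) in Lemma~\ref{lem:special-edges}. The difference is the order in which these are applied.

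The paper proceeds as follows:
\begin{enumerate}
\item It proves that \(\Gamma - V(M)\) contains a cycle, by filling \(\mathcal{V}\) vertex by vertex along the path components so that at most one edge crosses.
\item It invokes Lemma~\ref{lem:max-subgraph-matching} to force that cycle to be a \(K_3\) with no other edges.
\item It shows \(n = m+1\) by taking \(\mathcal{V} \supseteq V(C)\).
\item Only then does it count isolated vertices. Here \(\mathcal{V}\) is two isolated vertices, which relies on already knowing \(r = 2\).
\end{enumerate}

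You apply Lemma~\ref{lem:max-subgraph-matching} at the outset instead. Matching number one leaves only \(P_2\), \(P_3\), or \(K_3\) plus isolated vertices, so the acyclic case becomes two one-line checks. The cost is that you must bound the isolated vertices without yet knowing \(r\), which is why you need the two-case choice of \(\mathcal{V}\). In exchange, \(n = m+1\) follows directly from \(N = 4\) and \(2 \le r \le N-2\).

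Your version is also a bit more careful: it checks explicitly that each required \(\mathcal{V}\) exists for the admissible range of \(r\), which the paper leaves implicit.
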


\begin{proof}
Since \(\Gamma - V(M)\) contains 
no degree 3 vertices, Lemma \ref{lem:manifold-max-degree} guarantees that
every vertex in \(\Gamma - V(M)\) has degree at most \(2\).

Suppose for the sake of contradiction that \(\Gamma - V(M)\)
contains no cycles. Then, \(\Gamma - V(M)\) is a collection
of disjoint paths \(\Lambda_1, \Lambda_2, \cdots, \Lambda_r\).
Aiming to use Lemma \ref{lem:special-edges}, construct \(\mathcal{V}\) so that it consists of vertices chosen one
by one from one end of the path \(\Lambda_1\) all the way to the other end. Until \(n - (m - 1)\) vertices have been selected,
move on to \(\Lambda_2\), \(\Lambda_3\), etc.
Once \(n - (m - 1)\) vertices have been selected,
observe that Lemma \ref{lem:special-edges} is violated since there is at most one edge with one endpoint
inside and one outside of \(\mathcal{V}\).
Hence \(\Gamma - V(M)\) must contain a cycle \(C\).

Note that \(n > m\) under the hypothesis of this
Lemma otherwise \(1 < n - (n - 1) = 1\).
Notice that if \(C\) has more than 3 edges,
or there is another edge in \(\Gamma - V(M)\)
other than the ones on \(C\), then
\(\Gamma - V(M)\) has two or more disjoint edges.
However, since \(n > m\), Lemma \ref{lem:max-subgraph-matching} says that this only happens when 
\(\Gamma - V(M)\) has exactly \(n - (m - 1) + 1\) vertices.
Hence \(C \cong K_3\) and there are no other edges in
\(\Gamma - V(M)\).

Now, \(n\) cannot be larger than \(m + 1\) otherwise
letting \(\mathcal{V}\) contain all the vertices
in \(C\), there are no edges in \(\Gamma - V(M)\)
with endpoints inside and outside of \(\mathcal{V}\).
Hence \(n = m + 1\).

Finally, since \(\Gamma - V(M)\) contains
no edges other than those in \(C\), \(\Gamma - V(M)\)
must consist of \(C\) and potentially some number \(q\) of isolated vertices. 
If \(q = 0\), then \(\Gamma - V(M) = C\) 
contains exactly 3 vertices.
However, this means 
\(2 = n - (m - 1) < \abs{V(\Gamma - V(M))} - 1 = 2\).
If \(q > 1\), then we can choose \(\mathcal{V}\)
so that it consists of two isolated vertices
contradicting Lemma \ref{lem:special-edges}.
Hence \(q = 1\) i.e.\ \(\Gamma - V(M) \cong K_3 \sqcup K_1\).
\end{proof}

\begin{lem}
\label{lem:manifold-with-only-cycles}
If \(\Gamma - V(M)\) contains no degree 3 vertices,
and \(n - (m - 1) \in \{1, \abs{V(\Gamma - V(M))} - 1\}\),
then exactly one of the following is true.
\begin{enumerate}
\item \(m = 1\), \(\Gamma\) is a disjoint collection of cycles, and \(n \in \{1, \abs{V(\Gamma)} - 1\}\)
\item \(m > 1\), \(\Gamma - V(M) \cong K_3\), and \(n \in \{m, m+1\}\).
\item \(m > 1\), \(\Gamma - V(M) \cong K_{2,2}\), and \(n \in \{m, m+2\}\).
\end{enumerate}
\end{lem}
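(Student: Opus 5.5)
By Lemma \ref{lem:easy-only-cycles}, the hypothesis \(n - (m-1) \in \{1, \abs{V(\Gamma - V(M))} - 1\}\) forces every vertex of \(\Gamma - V(M)\) to have degree exactly \(2\). Since \(\Gamma\) is simple, \(\Gamma - V(M)\) is therefore a finite disjoint union of cycles, each of length at least \(3\). The plan is to split on whether \(m = 1\) or \(m > 1\). The three alternatives are mutually exclusive because the first has \(m=1\), the others have \(m>1\), and \(K_3 \not\cong K_{2,2}\). So the content of the lemma lies entirely in establishing each alternative.

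\textbf{Case \(m = 1\).} Here \(M = \emptyset\), so \(\Gamma - V(M) = \Gamma\). Hence \(\Gamma\) is a disjoint union of cycles, and the hypothesis reads \(n \in \{1, \abs{V(\Gamma)} - 1\}\). This is exactly alternative (1), and nothing further is needed.

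\textbf{Case \(m > 1\).} Now \(M\) contains at least one edge \(e = ab\). Write \(\Lambda = \Gamma - V(M)\), a disjoint union of cycles, and set \(s = n - (m-1) \in \{1, \abs{V(\Lambda)} - 1\}\).

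The key step is to exploit the matching \(M' = (M \setminus \{e\}) \cup \{f\}\) for an edge \(f\) of \(\Lambda\). This is again an \((m-1)\)-matching, so Lemmas \ref{lem:manifold-max-degree}--\ref{lem:easy-only-cycles} and \ref{lem:manifold-with-claw}--\ref{lem:manifold-with-cycle-and-isolated} apply to \(\Lambda' = \Gamma - V(M')\). This graph is \(\Lambda\) with the endpoints of \(f\) deleted and \(a, b\) added back. Its vertex count equals that of \(\Lambda\), so the same value \(s\) is at the same extreme of \(\{1, \abs{V(\Lambda')} - 1\}\). By Lemma \ref{lem:easy-only-cycles}, \(\Lambda'\) is also 2-regular.

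Deleting the two endpoints of \(f\) from a cycle of length \(\ell\) leaves a path on \(\ell - 2\) vertices. Its two end vertices have degree \(1\) in \(\Lambda - V(f)\) (if \(\ell \ge 4\)), and the middle vertices have degree \(2\). To be 2-regular in \(\Lambda'\), each end must gain exactly one neighbor among \(\{a,b\}\). The other cycles of \(\Lambda\) must receive no edges to \(a\) or \(b\), since their vertices already have degree \(2\). Moreover, \(a\) and \(b\) must each have degree \(2\) in \(\Lambda'\), using the edge \(ab\) and at most the two path ends. This forces the path to have length constraints: either \(\ell = 3\), where the single remaining vertex \(c\) must be adjacent to both \(a\) and \(b\) (giving a triangle \(abc\)), or \(\ell = 4\), where the two remaining vertices are adjacent, and to \(a\) and \(b\) respectively, giving a 4-cycle. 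In both cases, other cycles of \(\Lambda\) would need vertices adjacent to \(a\) or \(b\) to keep \(a\) and \(b\) at degree 2, which is impossible. The final accounting shows that \(\Lambda\) is a single cycle of length \(3\) or \(4\).

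I expect this degree bookkeeping to be the main obstacle. It must be done carefully when \(\Lambda\) has several components, because a 2-regular \(\Lambda'\) could a priori be assembled from pieces of different cycles. If the argument above leaks, my fallback is to apply Lemma \ref{lem:max-subgraph-matching} to \(\Lambda\) and \(\Lambda'\) whenever \(n > m\). When \(n = m\), I would take \(\mathcal{V} = \{a'\}\) for suitable vertices together with Lemma \ref{lem:special-edges} for \(M'\), which bounds the number of edges from \(a, b\) into \(\Lambda\).

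It remains to pin down \(n\). If \(\Lambda \cong K_3\), then \(s \in \{1, 2\}\), so \(n \in \{m, m+1\}\). If \(\Lambda\) is a 4-cycle, that is \(\Lambda \cong K_{2,2}\), then \(s \in \{1, 3\}\), so \(n \in \{m, m+2\}\). This yields alternatives (2) and (3).
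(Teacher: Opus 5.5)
Your setup is sound: $\Lambda'=\Gamma-V(M')$ has the same number of vertices as $\Lambda$, so Lemma \ref{lem:easy-only-cycles} makes it $2$-regular. The $m=1$ case, the exclusivity, and the final values of $n$ are also correct.

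\textbf{The gap.} You claim the degree bookkeeping for a \emph{single} swap forces $\ell\in\{3,4\}$ and rules out other cycles. It forces neither. For any $\ell\ge 4$, join $a$ to one end of the path $C-V(f)$ and $b$ to the other end. Together with the edge $ab$ this gives an $\ell$-cycle, the other cycles of $\Lambda$ are untouched, and $\Lambda'$ is $2$-regular. For $\ell=3$, joining $c$ to both $a$ and $b$ works the same way. Your sentence that other cycles ``would need vertices adjacent to $a$ or $b$ to keep $a$ and $b$ at degree 2'' is backwards: $a$ and $b$ already reach degree $2$ from $ab$ and the path ends.

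The fallback adds nothing either. If $n>m$, then $n-(m-1)=\abs{V(\Lambda)}-1$, so the conclusion of Lemma \ref{lem:max-subgraph-matching} already holds. If $n=m$, Lemma \ref{lem:special-edges} with a single-vertex $\mathcal{V}$ just restates $2$-regularity.

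\textbf{The missing idea: a second swap.} You need a second swap that exploits the adjacency revealed by the first; this is what the paper does.
\begin{itemize}
\item Two cycles $C$ and $D$: swapping $e$ for an edge of $C$ shows some $x\in V(C)$ is adjacent to $a$ or $b$. Swapping $e$ instead for an edge of $D$ keeps all of $C$ together with $a$ and $b$, so $x$ has degree at least $3$.
\item One cycle through consecutive vertices $v_1,\dots,v_5$: swapping in $v_2v_3$ shows $v_4$ is adjacent to $a$ or $b$, say $a$. Swapping in $v_1v_2$ instead keeps $v_3,v_4,v_5,a$, so $v_4$ has degree $3$.
\end{itemize}

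\textbf{A repair within your framework.} You can also get this by letting $f$ range over all edges of $\Lambda$.
\begin{itemize}
\item Apply your own remark (cycles avoiding $f$ receive no edges from $a$ or $b$) with $f\in E(D)$. This contradicts what $f\in E(C)$ gives.
\item For a single cycle, require $\abs{(N(a)\cap V(C))\setminus V(f)}=1$ for every edge $f$ of $C$ and sum over the $\ell$ edges. This gives $(\ell-2)\,\abs{N(a)\cap V(C)}=\ell$, which has no integer solution when $\ell\ge 5$.
\end{itemize}
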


\begin{proof}
Lemma \ref{lem:easy-only-cycles} guarantees that
\(\Gamma - V(M)\) is a disjoint collection of cycles.
The first possible conclusion follows quickly from
observing that if \(m = 1\), then \(n - (m - 1) = n\) and  \(\Gamma - V(M) = \Gamma\).
To see that either the second or third conclusion is true, first suppose \(m > 1\).
Notice that any choice of a set \(\mathcal{V}\) of \(n - (m - 1)\) vertices will satisfy the hypothesis of Lemma \ref{lem:special-edges}. However, when we change
which \((m-1)\)-matching we are using, we will see
Lemma \ref{lem:special-edges} fail.
We will use this technique to show that \(\Gamma - V(M)\)
is a cycle with at most 4 vertices.
Notice that if our claim is true, then
\(\Gamma - V(M)\) is either a 3-cycle,
or a 4-cycle. Since \(n - (m - 1) \in \{1, \abs{V(\Gamma - V(M))} - 1\}\),
if \(\Gamma - V(M)\) is a 3-cycle,
then \(n \in \{m, m + 1\}\),
and if \(\Gamma - V(M)\) is a 4-cycle,
then \(n \in \{m, m+2\}\).
We now prove our claim in two steps.

\textbf{Step 1:} \(\Gamma - V(M)\) consists of exactly one cycle.

Suppose \(\Gamma - V(M)\) has two cycles \(C\) and \(D\).
Since \(\Gamma\) is simple, \(\Gamma\) can have two cycles only if \(C\) and \(D\) have length at least \(3\). Hence the number of vertices in \(\Gamma - V(M)\) must be at least \(6\).
    Since \(m > 1\), \(M\) contains at least one edge \(e_1\).
    Let \(e_2\) and \(e_3\) be edges on \(C\) and \(D\) respectively.

    \begin{figure}[h!]
        \centering
        \begin{tikzpicture}
            \begin{scope}[shift={(-2,0)}]
                \node (v1) at (0,0) [dormant_graph_node] {};
                \node (v2) at (0,1.5) [dormant_graph_node] {};
                \draw (v1) -- (v2) [dashed] node[midway, right] {\(e_1\)};
            \end{scope}
            \begin{scope}
                \node (c0) at ({1.5 * cos(60) + 1.5 * cos(30)},{-1.5 * sin(60) - 1.5 * sin(30)}) {};
                \node (c1) at ({1.5 * cos(60)},{-1.5 * sin(60)}) [active_graph_node] {};
                \node (c2) at (0,0) [active_graph_node] {};
                \node (c3) at (0,1.5) [active_graph_node] {};
                \node (c4) at ({1.5 * cos(60)},{1.5 + 1.5 * sin(60)}) [active_graph_node] {};
                \node (c5) at ({1.5 * cos(60) + 1.5 * cos(30)},{1.5 + 1.5 * sin(60) + 1.5 * sin(30)}) {};
                \draw (c0) -- (c1);
                \draw (c1) -- (c2);
                \draw (c2) -- (c3) node [midway, right] {\(e_2\)};
                \draw (c3) -- (c4);
                \draw (c4) -- (c5);
            \end{scope}

            \begin{scope}[shift={(3,0)}]
                \node (d0) at ({1.5 * cos(60)},{-1.5 * sin(60)}) {};
                \node (d1) at (0, 0) [active_graph_node] {};
                \node (d2) at (0, 1.5) [active_graph_node] {};
                \node (d3) at ({1.5 * cos(60)},{1.5 + 1.5 * sin(60)}) {};
                \draw (d0) -- (d1);
                \draw (d1) -- (d2) node [midway, right] {\(e_3\)};
                \draw (d2) -- (d3);

            \end{scope}
        \end{tikzpicture}
    \caption{Parts of cycles \(C\) and \(D\) in \(\Gamma - V(M)\)}
    \label{fig:lem:manifold-one-cycle:1}
    \end{figure}

    Define \(M' = (M \setminus \{e_1\}) \cup \{e_2\}\) and \(M'' = (M \setminus \{e_1\}) \cup \{e_3\}\).
    Notice \(\Gamma - V(M')\) and \(\Gamma - V(M'')\) cannot have any degree 3 vertices otherwise
    Lemma \ref{lem:manifold-with-claw} would imply
    these subgraphs both have only 4 vertices. So,
    Lemma \ref{lem:easy-only-cycles} guarantees
    \(\Gamma - V(M')\) and \(\Gamma - V(M'')\)
    are also collections of disjoint cycles.
    
    So, \(e_1\) must belong to a cycle in \(\Gamma - V(M')\).
    Observe that the only way for this to happen is if \(e_1\) belongs to the ``broken cycle'' \(C - V(\{e_2\})\)
    in \(\Gamma - V(M')\) (see Figure \ref{fig:lem:manifold-one-cycle:2}).

    \begin{figure}[h!]
        \centering
        \begin{tikzpicture}
            \begin{scope}[shift={(-2,0)}]
                \node (v1) at (0,0) [active_graph_node] {};
                \node (v2) at (0,1.5) [active_graph_node] {};
                \draw (v1) -- (v2) node[midway, right] {\(e_1\)};
            \end{scope}
            \begin{scope}
                \node (c0) at ({1.5 * cos(60) + 1.5 * cos(30)},{-1.5 * sin(60) - 1.5 * sin(30)}) {};
                \node (c1) at ({1.5 * cos(60)},{-1.5 * sin(60)}) [active_graph_node] {};
                \node (c2) at (0,0) [dormant_graph_node] {};
                \node (c3) at (0,1.5) [dormant_graph_node] {};
                \node (c4) at ({1.5 * cos(60)},{1.5 + 1.5 * sin(60)}) [active_graph_node] {};
                \node (c5) at ({1.5 * cos(60) + 1.5 * cos(30)},{1.5 + 1.5 * sin(60) + 1.5 * sin(30)}) {};
                \draw (c0) -- (c1);
                \draw (c1) -- (c2) [dashed];
                \draw (c2) -- (c3) [dashed] node [midway, right] {\(e_2\)};
                \draw (c3) -- (c4) [dashed];
                \draw (c4) -- (c5);
            \end{scope}

            \draw (v1) -- (c1);
            \draw (v2) -- (c4);
            \begin{scope}[shift={(3,0)}]
                \node (d0) at ({1.5 * cos(60)},{-1.5 * sin(60)}) {};
                \node (d1) at (0, 0) [active_graph_node] {};
                \node (d2) at (0, 1.5) [active_graph_node] {};
                \node (d3) at ({1.5 * cos(60)},{1.5 + 1.5 * sin(60)}) {};
                \draw (d0) -- (d1);
                \draw (d1) -- (d2) node [midway, right] {\(e_3\)};
                \draw (d2) -- (d3);

            \end{scope}
        \end{tikzpicture}
    \caption{\(e_1\) attached to \(C\) in \(\Gamma - V(M')\)}
    \label{fig:lem:manifold-one-cycle:2}
    \end{figure}

    But then, a vertex on \(C\) has degree \(3\) in \(\Gamma - V(M'')\) (if \(C\) has length greater than \(3\))
    or degree \(4\) (if \(C\) has length equal to \(3\)), a contradiction.

\textbf{Step 2:} \(\Gamma - V(M)\) is a cycle with at most 4 vertices.

The previous step shows that \(\Gamma - V(M)\)
consists of a single cycle. Suppose for the sake of contradiction that this
cycle has more than 4 vertices and 
let \(v_1\), \(v_2\), \(v_3\), \(v_4\), and \(v_5\)
be five vertices in a path on it.

Since \(m > 1\), \(M\) contains at least one edge \(w_1 w_2\).
Define \(M' = (M \setminus \{w_1 w_2\}) \cup \{v_2 v_3\}\).
Since \(\Gamma - V(M)\) contains more than \(4\) vertices, \(\Gamma - V(M')\) also contains more than \(4\) vertices. So, \(\Gamma - V(M')\) cannot
have any degree 3 vertices (otherwise we contradict Lemma \ref{lem:manifold-with-claw}), and
Lemma \ref{lem:easy-only-cycles} implies \(\Gamma - V(M')\) is a collection of cycles.

Using the same argument as in step 1
one can conclude that \(\Gamma - V(M')\) must consist
of exactly one cycle. So, \(w_1\) and \(w_2\) must be adjacent to \(v_1\) or \(v_4\) in \(\Gamma - V(M')\) since those vertices
are the only vertices in \(\Gamma - V(M')\) that could have degree less than \(2\).

    \begin{figure}[h!]
        \centering
        \begin{tikzpicture}
            \foreach \i in {1,...,5} \node (v\i) at ({90-72*\i}:1.2) [circle, draw] {\(v_{\i}\)};
            \draw (v1) -- (v2) -- (v3) -- (v4) -- (v5);

            \draw[dotted] (v5) -- (v1);

            \node (w1) at (-3,.7) [circle, draw] {\(w_1\)};
            \node (w2) at (-3,-.7) [circle, draw] {\(w_2\)};
            \draw (w1) -- (w2);

            \draw (w1) -- (v4);
        \end{tikzpicture}
        \caption{\(w_1\) is adjacent to \(v_4\) in \(\Gamma - V(M')\)}
        \label{fig:lem:manifold-cycle-length-1:1}
    \end{figure}

    Without loss of generality, suppose \(w_1\) is adjacent to \(v_4\) in \(\Gamma - V(M')\) (see Figure \ref{fig:lem:manifold-cycle-length-1:1})
    and define \(M'' = (M \setminus \{w_1 w_2\}) \cup \{v_1 v_2\}\).
    Then \(v_4\) has degree \(3\) in \(\Gamma - V(M'')\).
    Lemma \ref{lem:manifold-with-claw} then requires that \(\Gamma - V(M'')\) is just \(v_4\) and its neighbors.
    However, \(\Gamma - V(M'')\) has at least \(5\) vertices. 
\end{proof}

\subsection{Exclusivity of Subgraphs}
Lemmas \ref{lem:manifold-with-claw},
\ref{lem:manifold-with-cycle-and-isolated},
and \ref{lem:manifold-with-only-cycles}
tell us exactly what \(\Gamma - V(M)\)
could possibly be. However, one might ask whether it is possible for two \((m-1)\)-matchings \(M_1 \neq M_2\) to exist in \(\Gamma\) such that \(\Gamma - V(M_1) \not \cong \Gamma - V(M_2)\).
Theorem \ref{thm:exclusive-subgraphs} below explains how this is not possible when \(\DConf_n(\Gamma)\)
is an \(m\)-psuedomanifold.
Note that if the assumption \(m > 1\) or \(n \not\in \{1, \abs{V(\Gamma)} - 1\}\) does not hold, then Lemma \ref{lem:easy-only-cycles} guarantees \(\Gamma\) is the disjoint union of one or more cycle graphs.

\begin{thm}
    \label{thm:exclusive-subgraphs}
    Suppose that \(m > 1\) or \(n \not \in \{1, \abs{V(\Gamma)} - 1\}\).
    \(\DConf_n(\Gamma)\) is a closed \(m\)-psuedomanifold
    if and only if there exists a graph \(\Lambda \in \{K_3, K_{2,2}, K_3 \sqcup K_1, K_{1,3}\}\) such that for any \((m-1)\)-matching \(M\) we have that
    \(\Gamma - V(M) \cong \Lambda\), and
    \begin{enumerate}
        \item If \(\Lambda = K_3\), then \(n \in \{m, m+1\}\).
        \item If \(\Lambda = K_{2,2}\), then \(n \in \{m, m+2\}\).
        \item If \(\Lambda \in \{K_{1,3}, K_3\sqcup K_1\}\), then \(n = m+ 1\).
    \end{enumerate}
\end{thm}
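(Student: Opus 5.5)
The plan is to prove the two directions separately, with almost all of the work in the forward direction, where the point is to rule out two matchings giving non-isomorphic subgraphs.

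For the \emph{reverse} direction, assume that for every \((m-1)\)-matching \(M\) we have \(\Gamma - V(M) \cong \Lambda\) with \(n\) as listed. By Lemma \ref{lem:special-edges} it suffices to check, for each \(\Lambda\) and each admissible value of \(k = n-(m-1)\), that every \(k\)-subset \(\mathcal{V}\) of \(V(\Lambda)\) has exactly two edges of \(\Lambda\) crossing between \(\mathcal{V}\) and its complement. This is a finite check:
\begin{enumerate}
\item For \(K_3\) with \(k \in \{1,2\}\), and for \(K_{2,2}\) with \(k \in \{1,3\}\), the cut around a single vertex (or its complement) has size \(2\).
\item For \(K_{1,3}\) with \(k=2\), any two of the four vertices cut exactly two edges: if the center is included, the other two leaves are cut; if not, the two leaves are cut.
\item For \(K_3 \sqcup K_1\) with \(k=2\), either two triangle vertices are chosen, or one triangle vertex together with the isolated vertex; each cuts two edges.
\end{enumerate}
One should also confirm that an \((m-1)\)-matching exists and that the hypothesis excludes the degenerate \(m=1\) cycle case. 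When \(m=1\), \(M=\emptyset\) and \(\Gamma=\Lambda\), which is consistent.

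For the \emph{forward} direction, fix any \((m-1)\)-matching \(M\). Lemmas \ref{lem:manifold-with-claw}, \ref{lem:manifold-with-cycle-and-isolated}, and \ref{lem:manifold-with-only-cycles} show that \(\Gamma - V(M)\) is one of the four graphs, with \(n\) as listed. Case (1) of Lemma \ref{lem:manifold-with-only-cycles} is excluded by the hypothesis: if \(m=1\) then \(n \notin \{1,\abs{V(\Gamma)}-1\}\).

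The remaining task is exclusivity: \(\Lambda\) cannot depend on \(M\). First, \(n-(m-1)\) is fixed, and the vertex count \(\abs{V(\Gamma)}-2(m-1)\) is the same for every \((m-1)\)-matching. So \(\abs{V(\Lambda)}\) is constant, which already separates \(K_3\) (three vertices) from the three four-vertex graphs. Within the four-vertex graphs, the condition on \(n\) separates \(K_{2,2}\) when \(n=m+2\). The hard cases are \(K_{2,2}\) versus \(K_{1,3}\) versus \(K_3\sqcup K_1\) when \(n=m+1\), and \(K_{2,2}\) when \(n=m\) versus the others. For \(n=m\) we have \(k=1\), so Lemma \ref{lem:easy-only-cycles} forces every \(\Gamma-V(M)\) to be \(2\)-regular; on four vertices that means \(K_{2,2}\) for all \(M\). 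For \(n=m+1\) we have \(k=2 = \abs{V(\Lambda)}-2\), so the lemmas permit all three of \(K_{2,2}\), \(K_{1,3}\), \(K_3\sqcup K_1\); wait, \(K_{2,2}\) requires \(k\in\{1,3\}\), so it is excluded here. That leaves \(K_{1,3}\) versus \(K_3\sqcup K_1\) as the real obstacle.

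To handle this, I would use a swapping argument in the style of Step 1 of Lemma \ref{lem:manifold-with-only-cycles}. These two graphs both have three edges but differ in whether they contain two disjoint edges: \(K_{1,3}\) has none, while \(K_3\sqcup K_1\) has none either, so that invariant does not help. Instead I would compare degree sequences through adjacent matchings.

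The first step is to show that if \(\Lambda\) varies, then there are two matchings \(M_1, M_2\) differing in a single edge, say \(M_2 = (M_1\setminus\{e\})\cup\{f\}\) with \(f \in \Gamma - V(M_1)\), whose subgraphs are of different types. This should follow from connectivity of the ``swap graph'' on \((m-1)\)-matchings, or more directly by applying the swap only to matchings reachable this way, which should suffice for contradiction.

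Then I would argue locally. \(\Gamma - V(M_2)\) is obtained from \(\Gamma - V(M_1)\) by deleting the endpoints of \(f\) and adding the endpoints of \(e\), so the two graphs share exactly two vertices, namely \(\Lambda_1 - V(f)\).
\begin{enumerate}
\item If \(\Lambda_1 = K_{1,3}\), then \(f\) is a spoke and the leftover is two non-adjacent leaves.
\item If \(\Lambda_1 = K_3\sqcup K_1\), then \(f\) is a triangle edge and the leftover is a triangle vertex plus the isolated vertex, again non-adjacent.
\end{enumerate}
Next, count edges from \(V(e)\) to these leftover vertices in each picture, and choose different swap edges \(f\) (three choices in either graph) to get incompatible adjacency requirements. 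For instance, if \(\Lambda_1=K_3\sqcup K_1\), swapping in each of the three triangle edges forces the isolated vertex to be adjacent to \(V(e)\) in a way that differs across the three swaps, contradicting degree constraints (Lemma \ref{lem:manifold-max-degree}) in a third subgraph. I expect this case analysis to be the main obstacle.
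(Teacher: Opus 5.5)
Your reverse direction and your reduction match the paper's. The constant vertex count isolates \(K_3\), and the allowed values of \(n\) isolate \(K_{2,2}\), leaving \(K_{1,3}\) versus \(K_3\sqcup K_1\) with \(n=m+1\). That last case is the real content of the theorem, and the proposal does not prove it.

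The first missing piece is a pair of matchings of different types that differ in one edge. This requires connectivity of the swap graph on \((m-1)\)-matchings, which is a genuine lemma rather than a formality: it fails for perfect matchings, where no single-edge swap exists. The paper proves it as Lemma \ref{lem:matching-connectivity}, by induction on \(\abs{M\triangle M'}\). It uses the fact that every \(\Gamma - V(M)\) contains an edge to handle an edge of \(M\setminus M'\) that meets two edges of \(M'\). Restricting to matchings reachable from a fixed one does not replace this. The theorem asserts a single \(\Lambda\) for all \((m-1)\)-matchings, and without connectivity the type could simply be constant on each swap class. Also, a one-edge change may insert an edge sharing an endpoint with the removed one, so \(f\) need not lie in \(\Gamma - V(M_1)\). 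This case is easy to exclude: the new leftover set then contains either the triangle, or an edge plus a vertex adjacent to neither endpoint, so it is not a claw. But it has to be said.

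The more serious gap is the local contradiction, and the mechanism you propose cannot produce it.
\begin{enumerate}
\item Every \(\Gamma - V(M)\) here has exactly four vertices, so Lemma \ref{lem:manifold-max-degree} holds automatically and never yields a contradiction.
\item The three triangle-edge swaps do not conflict either. Take \(\Gamma - V(M_1)\) to be the triangle \(abc\) plus the isolated vertex \(d\), with \(e = vw\), and suppose the swap to \(ab\) gives a claw, centred without loss of generality at \(v\). Then \(vc, vd \in E(\Gamma)\) and \(wc, wd \notin E(\Gamma)\).
\item The swaps to \(bc\) and \(ca\) are then forced to be claws centred at \(v\) as well: the only possible triangle would be \(\{v,w,d\}\), which needs \(wd\). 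This gives \(va, vb\in E(\Gamma)\) and \(wa, wb \notin E(\Gamma)\). Everything is consistent, and \(d\) is adjacent to \(v\) every time rather than ``in a way that differs.''
\end{enumerate}

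The missing idea is to swap an edge at the claw centre into the matching. Replace \(ab\) in \(M_2\) by \(cv\), giving \((M_1\setminus\{vw\})\cup\{cv\}\). Its leftover vertex set is \(\{a,b,d,w\}\). On this set the adjacencies above leave only the edge \(ab\), which is neither \(K_{1,3}\) nor \(K_3\sqcup K_1\). The paper's Lemma \ref{lem:claw-complement-or-claw-not-special} carries out a chain of exactly this kind: it swaps in \(cv\) and then \(dv\), ending with five edges on \(\{a,b,c,w\}\). Without some such explicit chain, the forward direction is not established.
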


\begin{proof}
    \((\Leftarrow)\) For any such \(\Lambda\), 
    one can check that the hypothesis of Lemma \ref{lem:special-edges} is satisfied.

    \((\Rightarrow)\)
    Let \(M_0\) be some \((m-1)\)-matching.
    Since \(m > 1\) or \(n \not \in \{1, \abs{V(\Gamma)} - 1\}\), Lemmas 
    \ref{lem:manifold-with-claw},
    \ref{lem:manifold-with-cycle-and-isolated},
    and \ref{lem:manifold-with-only-cycles} guarantee \(\Gamma - V(M_0)\) is one of \(\{K_3, K_{2,2}, K_3 \sqcup K_1, K_{1,3}\}\). Let \(\Gamma - V(M_0) = \Lambda_0\).
    
    If \(\Lambda_0 \cong K_3\), then \(\Gamma\) must have \(2m + 1\) vertices. Since in every other case for what \(\Lambda_0\) can be \(\Gamma\) must have \(2m + 2\) vertices, if \(\Lambda_0 \cong K_{3}\), then \(\Gamma - V(M) \cong K_3\) for \textit{every} \((m-1)\)-matching \(M\).
    In addition, Lemma \ref{lem:manifold-with-only-cycles} guarantees that \(n \in \{m, m+1\}\).
    
    Notice that if \(\Lambda_0 \cong K_{2,2}\) then Lemma \ref{lem:manifold-with-only-cycles} forces \(n \in \{m, m+2\}\).
    Since Lemmas \ref{lem:manifold-with-claw}
    and \ref{lem:manifold-with-cycle-and-isolated} guarantee \(n\) must be \(m+1\), if \(\Gamma - V(M)\) is one of \(K_{1,3}\) or \(K_3\sqcup K_1\) for some \((m-1)\)-matching \(M\), it follows that if \(\Lambda_0 \cong K_{2,2}\), then we must have that \(\Gamma - V(M) \cong K_{2,2}\) for \textit{every} \((m-1)\)-matching \(M\).

    Since there are only two possibilities remaining for what \(\Lambda_0\) is, we just need to show that it is impossible for \(\Gamma - V(M) \not \cong \Lambda_0\) for some \((m-1)\)-matching \(M \neq M_0\).
    The proof of this fact will be split into the next two lemmas.
\end{proof}

Recall from Definition \ref{defn:matching-set} that \(\mathcal{M}_k(\Gamma)\) is the set of all \(k\)-matchings in \(\Gamma\).
We denote the symmetric difference of two sets \(A\) and \(B\) by \(A \triangle B = (A \setminus B) \cup (B \setminus A)\).
\begin{lem}
    \label{lem:matching-connectivity}
    Let \(k\) be a positive integer and suppose \(\Gamma\) is a graph where for any \(M \in \mathcal{M}_k(\Gamma)\),
    the graph \(\Gamma - V(M)\) contains at least one edge.
    Then, for any \(M, M' \in \mathcal{M}_k(\Gamma)\),
    there exists a finite sequence \(M_1, M_2, M_3, \ldots, M_n \in \mathcal{M}_k(\Gamma)\)
    such that \(M_1 = M\), \(M_n = M'\), and
    \(\abs{M_i \triangle M_{i+1}} = 2\) for each \(i = 1, 2, \ldots, n-1\).
\end{lem}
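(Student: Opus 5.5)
We argue by induction on \(d(M, M') = \abs{M \setminus M'}\), which equals \(\abs{M' \setminus M}\) since both matchings have exactly \(k\) edges. Call two \(k\)-matchings \emph{adjacent} if their symmetric difference has exactly \(2\) elements, i.e.\ one is obtained from the other by exchanging a single edge. It suffices to show the following: whenever \(M \neq M'\), there is a finite chain of adjacent \(k\)-matchings from \(M\) to some \(M^\ast\) with \(d(M^\ast, M') < d(M, M')\). Concatenating such chains then gives the desired sequence. If \(d(M,M') = 0\), then \(M = M'\) and the one-term sequence works.

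So assume \(M \neq M'\) and fix an edge \(e' \in M' \setminus M\). Every edge of \(M \cap M'\) is disjoint from \(e'\), because \(M'\) is a matching. Hence the edges of \(M\) that share a vertex with \(e'\) all lie in \(M \setminus M'\), and there are at most two of them, one per endpoint of \(e'\). We split into three cases according to how many such edges there are.

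\textbf{No edge of \(M\) meets \(e'\).} Pick any \(f \in M \setminus M'\) and set \(M^\ast = (M \setminus \{f\}) \cup \{e'\}\). This is a \(k\)-matching adjacent to \(M\), and it has one more edge of \(M'\) than \(M\) does, so \(d(M^\ast, M') = d(M, M') - 1\).

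\textbf{Exactly one edge \(f\) of \(M\) meets \(e'\).} Set \(M^\ast = (M \setminus \{f\}) \cup \{e'\}\). Again this is a \(k\)-matching adjacent to \(M\), and \(d\) drops by one.

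\textbf{Two edges \(f_1, f_2 \in M \setminus M'\) meet \(e'\), one at each endpoint.} This is the main obstacle, since no single exchange can insert \(e'\); here the hypothesis is used. By assumption \(\Gamma - V(M)\) contains an edge \(g\). Both endpoints of \(e'\) lie in \(V(M)\), so \(g\) is disjoint from \(e'\), and \(g\) is also disjoint from every edge of \(M\). Proceed in two steps.
\begin{enumerate}
    \item Let \(M_1 = (M \setminus \{f_1\}) \cup \{g\}\). This is a \(k\)-matching adjacent to \(M\).
    \item In \(M_1\), the only edge meeting \(e'\) is \(f_2\). Let \(M^\ast = (M_1 \setminus \{f_2\}) \cup \{e'\}\). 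This is a \(k\)-matching adjacent to \(M_1\).
\end{enumerate}
Now \(M^\ast = (M \setminus \{f_1, f_2\}) \cup \{g, e'\}\). Since \(f_1, f_2 \notin M'\), we lose two edges not in \(M'\) and gain \(e' \in M'\) together with \(g\), which may or may not lie in \(M'\). Therefore
\[
d(M^\ast, M') \le d(M, M') - 2 + 1 = d(M, M') - 1.
\]

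In every case the distance to \(M'\) strictly decreases along a chain of adjacent \(k\)-matchings, which completes the induction.
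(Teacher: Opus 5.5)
Your proof is correct and follows essentially the same route as the paper's: an edge is swapped in directly unless it is blocked at both endpoints, and in that case the hypothesis supplies a free edge that unblocks one side. The only difference is orientation. The paper inserts an edge of \(M\setminus M'\) into \(M'\), inducts on \(\abs{M\triangle M'}\), and reduces its two-blocker case to the one-blocker case. You insert an edge of \(M'\setminus M\) into \(M\), induct on \(\abs{M\setminus M'}\), and handle the two-blocker case in two explicit steps, which absorbs the possibility \(g\in M'\) into a single inequality.
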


\begin{proof}
    We proceed by induction on \(\abs{M \triangle M'}\).
    Notice that since \(M\) and \(M'\) both have \(k\) elements,
    \(\abs{M \triangle M'}\) is always even.
    If \(\abs{M \triangle M'} = 2\), then \(M, M'\) is the desired sequence of matchings.
    So suppose the lemma holds for any pair of \(k\)-matchings \(M\) and \(M'\) with \(\abs{M \triangle M'} = 2t\)
    for some \(t \ge 0\).
    Let \(M\) and \(M'\) be two \(k\)-matchings with \(\abs{M \triangle M'} = 2(t + 1)\).

    Note that \(\Gamma(M\triangle M')\) refers to the subgraph of \(\Gamma\)
    with edges \(M\triangle M'\) and vertices consisting of those in \(\Gamma\)
    which are an endpoint of any edge in \(M\triangle M'\).
    Since \(\abs{M \triangle M'} > 0\), there exists at least one edge \(e\) in \(M \setminus M'\).
    % This is a standard fact about the symmetric difference of matchings. 
    % In fact, I could cite it from "introduction to graph theory" by Douglas West.
    Observe that for any vertex \(v\) incident to an edge in \(\Gamma(M \triangle M')\), the degree of \(v\) must be at most \(2\)
    since at most one edge in \(M\) and at most one edge in \(M'\) can be incident to \(v\).
    % So, the subgraph formed by the edges in \(M \triangle M'\) is a disjoint union of paths and even cycles
    % where the edges in the paths and cycles alternate between being in \(M\) and \(M'\).

    \textbf{Case 1:} \(e\) is adjacent to at most one edge in \(\Gamma(M \triangle M')\).

    There also exists at least one edge \(e'\) in \(M'\setminus M\).
    If \(e\) is adjacent to an edge in \(\Gamma(M \triangle M')\),
    let \(e'\) be this edge. Otherwise, let \(e'\) be any edge in \(M' \setminus M\).
    Define \(M'' = (M' \setminus \{e'\}) \cup \{e\}\).
    Since \(e\) is not adjacent to any edge in \(\Gamma(M \triangle M')\),
    adding \(e\) to the set \(M'' \setminus \{e'\}\) does not create any adjacency issues.
    So, this is a matching where \(\abs{M \triangle M''} = 2t\), and \(\abs{M'' \triangle M'} = 2\).
    By the inductive hypothesis, there exists a sequence of matchings
    connecting \(M\) to \(M''\) where each pair of sequential matchings
    differ by exactly two edges. Appending \(M'\) to this sequence gives the desired sequence connecting \(M\) to \(M'\).

    \textbf{Case 2:} \(e\) is adjacent to exactly two edges in \(\Gamma(M \triangle M')\).

    Let \(e'_a\) and \(e'_b\) be the edges adjacent to \(e\) in \(\Gamma(M \triangle M')\).
    These edges must be in \(M'\).
    Let \(e'_c\) be an edge in \(\Gamma - V(M')\)
    and define \(M'' = (M' \setminus \{e'_a\}) \cup \{e'_c\}\).
    Then, \(e\) is adjacent to exactly one edge in \(M \triangle M''\) which puts us in Case 1.

\end{proof}

\begin{lem}
    \label{lem:claw-complement-or-claw-not-special}
    Suppose \(\Gamma - V(M)\) is isomorphic to \(K_{1,3}\) or \(K_3 \sqcup K_1\)
    for any \((m-1)\)-matching \(M\) in \(\Gamma\).
    There are no distinct \((m-1)\)-matchings \(M_1\) and \(M_2\) in \(\Gamma\)
    such that \(\Gamma - V(M_1) \cong K_{1,3}\) and \(\Gamma - V(M_2) \cong K_3 \sqcup K_1\).
\end{lem}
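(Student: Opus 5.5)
We argue by contradiction: suppose both types occur. First we use Lemma \ref{lem:matching-connectivity} to reduce to adjacent matchings, and then we analyze a single edge swap locally. The hypothesis of Lemma \ref{lem:matching-connectivity} with \(k = m-1\) holds, since every \(\Gamma - V(M)\) is \(K_{1,3}\) or \(K_3 \sqcup K_1\) and so contains an edge. Hence \(M_1\) and \(M_2\) are joined by a sequence of \((m-1)\)-matchings in which consecutive terms satisfy \(\abs{M_i \triangle M_{i+1}} = 2\). Along this sequence the isomorphism type of \(\Gamma - V(M_i)\) takes only the two values \(K_{1,3}\) and \(K_3 \sqcup K_1\), so it must switch at some step. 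We may therefore assume \(M_2 = (M_1 \setminus \{f\}) \cup \{g\}\) for edges \(f = w_1w_2 \in M_1\) and \(g \notin M_1\), with \(\Gamma - V(M_1) \cong K_{1,3}\) and \(\Gamma - V(M_2) \cong K_3 \sqcup K_1\) (or the reverse, which is symmetric).

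Next we set up the local picture. Let \(N = M_1 \setminus \{f\}\), an \((m-2)\)-matching, and let \(H = \Gamma - V(N)\), a graph on \(6\) vertices. Since \(g\) is disjoint from \(N\), it lies in \(H\). In \(H\) we have \(H - \{w_1,w_2\} \cong K_{1,3}\) and \(H - V(g) \cong K_3 \sqcup K_1\). Because \(g\) avoids the vertices of \(N\), \(g\) is an edge of \(H\). It need not avoid \(w_1, w_2\), so there are two subcases.

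\textbf{Subcase (a): \(g\) is disjoint from \(f\).} Then \(g\) is an edge of the claw \(H - \{w_1,w_2\}\), say \(g = vc\) with center \(v\) and leaves \(a, b, c\). Removing \(v\) and \(c\) leaves \(\{a, b, w_1, w_2\}\), which must induce \(K_3 \sqcup K_1\). Since \(ab\) is not an edge, the triangle must be \(\{a, w_1, w_2\}\) or \(\{b, w_1, w_2\}\); say \(a\) is adjacent to both \(w_1\) and \(w_2\). Now swap in \(av\) instead, i.e. take \(M_3 = N \cup \{av\}\). Then \(\Gamma - V(M_3)\) is the induced graph on \(\{b, c, w_1, w_2\}\), which contains the edge \(w_1w_2\); it must be one of the two allowed graphs. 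Alternatively, take \(M_4 = N \cup \{aw_1\}\). Then \(\Gamma - V(M_4)\) contains \(v\) adjacent to \(b\) and \(c\), plus \(w_2\). To be \(K_{1,3}\) it needs \(vw_2\), and to be \(K_3 \sqcup K_1\) it needs \(bc\), which is absent. So \(vw_2\) is an edge. Symmetrically, \(M_5 = N \cup \{aw_2\}\) forces \(vw_1\). But then in \(H - V(g)\) nothing changes, while in \(M_3\) the graph on \(\{b, c, w_1, w_2\}\) must equal \(K_3 \sqcup K_1\) or \(K_{1,3}\), which forces extra adjacencies from \(b\) or \(c\) to \(w_i\). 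Feeding these back into some other swap should produce a vertex of degree \(\geq 3\) in a \(K_3 \sqcup K_1\) configuration, or a degree-\(4\) vertex contradicting Lemma \ref{lem:manifold-max-degree}.

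\textbf{Subcase (b): \(g\) shares a vertex with \(f\).} Say \(g = w_1x\) with \(x\) a vertex of the claw. Then \(H - V(g)\) consists of \(w_2\) together with the claw minus \(x\). If \(x = v\), what remains is three leaves plus \(w_2\), which needs a triangle among the leaves, impossible since the leaves of a claw are pairwise nonadjacent within the induced claw. If \(x\) is a leaf, the remainder is the path \(a\!-\!v\!-\!b\) plus \(w_2\), and \(K_3 \sqcup K_1\) requires the triangle \(\{a, v, b\}\), again impossible; or \(w_2\) joins to close a triangle, and then the isolated vertex count fails. In each case either a contradiction arises immediately or adjacencies are forced that break another swap.

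The main obstacle is subcase (a). There the adjacency bookkeeping branches, and one must choose the right auxiliary swaps (\(aw_1\), \(aw_2\), \(av\), \(bw_i\)) so that some \(\Gamma - V(M')\) has a degree-\(3\) vertex alongside an extra edge, contradicting Lemma \ref{lem:manifold-with-claw}, or has the wrong number of edges. I would first try to show that \(H\) must contain a \(4\)-vertex subgraph with \(4\) edges not of the form triangle plus isolated vertex, which would immediately contradict the hypothesis.
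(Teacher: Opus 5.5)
Your overall strategy is the paper's: use Lemma~\ref{lem:matching-connectivity} to reduce to two matchings differing by a single swap, then do swap-and-infer bookkeeping on the six-vertex graph $H=\Gamma-V(N)$. That bookkeeping rests on the fact that $H-V(e)$ must be $K_{1,3}$ or $K_3\sqcup K_1$ for every edge $e$ of $H$. Your subcase (b) is correct: any three of $\{a,b,c,w_2\}$ contain two non-adjacent leaves, and when $x$ is a leaf, $v$ has two non-adjacent neighbours, so $v$ can be neither in a triangle component nor isolated. Subcase (b) also covers a configuration the paper's write-up does not spell out, since the paper takes the incoming edge $ab$ inside the triangle, i.e.\ disjoint from the outgoing edge.

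\textbf{The gap is subcase (a).} This is the heart of the lemma, and you leave it at ``should produce'' and ``I would first try''. As written, the main case is not proved. The missing step needs no new idea, only two pieces of information you derive and then discard.

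First, since $b$ is the isolated vertex of $H-\{v,c\}$, you already know $bw_1,bw_2\notin E(\Gamma)$, not just $ab\notin E(\Gamma)$.

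Second, your analysis of $M_4=N\cup\{aw_1\}$ shows that $\Gamma-V(M_4)$ must be a claw centred at $v$. That forces not only $vw_2\in E$ but also $cw_2\notin E$, because the leaves of a claw are pairwise non-adjacent. Symmetrically, $M_5$ gives $cw_1\notin E$.

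Now $\Gamma-V(M_3)$, the graph induced on $\{b,c,w_1,w_2\}$, has the single edge $w_1w_2$. That is neither allowed graph, so you have your contradiction.

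An equivalent two-swap route, closer to the paper's: in $\Gamma-V(M_3)$ the vertex $b$ is isolated, so this graph must be $K_3\sqcup K_1$ with triangle $\{c,w_1,w_2\}$. Then $\Gamma-V(M_4)$, on $\{v,b,c,w_2\}$, contains $vb,vc,cw_2$ but not $bc$ or $bw_2$. It is therefore a path or a triangle with a pendant edge, again neither allowed graph.

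The routes you float at the end are unnecessary: degree-$\geq 3$ vertices, Lemma~\ref{lem:manifold-max-degree}, or a four-edge subgraph on four vertices. With the two inferences above, subcase (a) closes in a few lines and your proof would be complete.
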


\begin{proof}

    \tikzset{
            present/.style={circle, draw, text=white, fill=black, inner sep=0pt, minimum size=19pt},
            absent/.style={circle,draw, inner sep=0pt, minimum size=19pt}
    }

    Assume there exists \((m-1)\)-matchings \(M_1\) and \(M_2\) in \(\Gamma\)
    such that \(\Gamma - V(M_1) \cong K_3 \sqcup K_1\) and \(\Gamma - V(M_2) \cong K_{1,3}\).
    By Lemma \ref{lem:matching-connectivity}, there exists a sequence of \((m-1)\)-matchings
    connecting \(M_1\) to \(M_2\). By following this sequence we can find
    two sequential matchings where after erasing the vertices covered by these matchings,
    we get \(K_3 \sqcup K_1\) for one matching and \(K_{1,3}\) for the other.
    We assume \(M_1\) and \(M_2\) are these two matchings.

    We now make a series of ``swaps and inferences'' with these two matchings.
    Let \(M_1 \triangle M_2 = \{ab, vw\}\).
    %Notice that any edge we swap out of \(M_1\) cannot be incident to any vertex in \(\Gamma - V(M_1)\).
    \begin{figure}[h!]
        \centering
        \begin{tikzpicture}
            \node [present] (a) at (0,0) {\(a\)};
            \node [present] (b) at (0,1) {\(b\)};
            \node [present] (c) at (1,0) {\(c\)};
            \node [present] (d) at (2,1) {\(d\)};
            \node [absent] (v) at (2,0) {\(v\)};
            \node [absent] (w) at (3,0) {\(w\)};
            \draw (a) -- (b);
            \draw (b) -- (c);
            \draw (c) -- (a);
            \draw[dashed] (v) -- (w);
        \end{tikzpicture}
        \caption{\(\Gamma - V(M_1)\)}
        \label{fig:lem:exclusive-subgraphs:1}
    \end{figure}
    In Figure \ref{fig:lem:exclusive-subgraphs:1}, we illustrate the known
    edge connections in \(\Gamma - V(M_1)\).
    Swapping \(ab\) for \(vw\) we obtain 
    \(\Gamma - V(M_2)\) which by assumption is \(K_{1,3}\) (see Figure \ref{fig:lem:exclusive-subgraphs:2}).
    
    \begin{figure}[h!]
        \centering
        \begin{tikzpicture}
            \node [absent] (a) at (0,0) {\(a\)};
            \node [absent] (b) at (0,1) {\(b\)};
            \node [present] (c) at (1,0) {\(c\)};
            \node [present] (d) at (2,1) {\(d\)};
            \node [present] (v) at (2,0) {\(v\)};
            \node [present] (w) at (3,0) {\(w\)};
            \draw[dashed] (a) -- (b);
            \draw[dashed] (b) -- (c);
            \draw[dashed] (c) -- (a);
            \draw (v) -- (w);
        \end{tikzpicture}
        \caption{\(\Gamma - V((M_1 \setminus \{ab\}) \cup \{vw\})\)}
        \label{fig:lem:exclusive-subgraphs:2}
    \end{figure}

    Without loss of generality, assume \(v\) is the degree \(3\) vertex of \(\Gamma - V(M_2)\).
    We then infer the edge connections in Figure \ref{fig:lem:exclusive-subgraphs:3}.

    \begin{figure}[h!]
        \centering
        \begin{tikzpicture}
            \node [absent] (a) at (0,0) {\(a\)};
            \node [absent] (b) at (0,1) {\(b\)};
            \node [present] (c) at (1,0) {\(c\)};
            \node [present] (d) at (2,1) {\(d\)};
            \node [present] (v) at (2,0) {\(v\)};
            \node [present] (w) at (3,0) {\(w\)};
            \draw[dashed] (a) -- (b);
            \draw[dashed] (b) -- (c);
            \draw[dashed] (c) -- (a);
            \draw (v) -- (w);
            \draw (v) -- (c);
            \draw (v) -- (d);
            
        \end{tikzpicture}
        \caption{\(\Gamma - V(M_2)\)}
        \label{fig:lem:exclusive-subgraphs:3}
    \end{figure}

    Next, swap \(cv\) for \(ab\) (see Figure \ref{fig:lem:exclusive-subgraphs:4}).
    There are two possibilities: the resulting subgraph is \(K_{1,3}\) or it is \(K_3 \sqcup K_1\).

    \begin{figure}[h!]
        \centering
        \begin{tikzpicture}
            \node [present] (a) at (0,0) {\(a\)};
            \node [present] (b) at (0,1) {\(b\)};
            \node [absent] (c) at (1,0) {\(c\)};
            \node [present] (d) at (2,1) {\(d\)};
            \node [absent] (v) at (2,0) {\(v\)};
            \node [present] (w) at (3,0) {\(w\)};
            \draw (a) -- (b);
            \draw[dashed] (b) -- (c);
            \draw[dashed] (c) -- (a);
            \draw[dashed] (v) -- (w);
            \draw[dashed] (v) -- (c);
            \draw[dashed] (v) -- (d);
            
        \end{tikzpicture}
        \caption{\(\Gamma - V((M_2 \setminus \{cv\})\cup \{ab\})\)}
        \label{fig:lem:exclusive-subgraphs:4}
    \end{figure}

    \textbf{Case 1:} \(K_{1,3}\)

    Notice that \(a\) or \(b\) must be the degree \(3\) vertex.
    Suppose that the degree \(3\) vertex is \(b\).
    We then infer the edge connections in 
    Figure \ref{fig:lem:exclusive-subgraphs:5}.

    \begin{figure}[h!]
        \centering
        \begin{tikzpicture}
            \node [present] (a) at (0,0) {\(a\)};
            \node [present] (b) at (0,1) {\(b\)};
            \node [absent] (c) at (1,0) {\(c\)};
            \node [present] (d) at (2,1) {\(d\)};
            \node [absent] (v) at (2,0) {\(v\)};
            \node [present] (w) at (3,0) {\(w\)};
            \draw (a) -- (b);
            \draw[dashed] (b) -- (c);
            \draw[dashed] (c) -- (a);
            \draw[dashed] (v) -- (w);
            \draw[dashed] (v) -- (c);
            \draw[dashed] (v) -- (d);
            \draw (b) -- (d);
            \draw (b) -- (w);
        \end{tikzpicture}
        \caption{}
        \label{fig:lem:exclusive-subgraphs:5}
    \end{figure}

    Since \(d\) should be isolated in \(\Gamma - V(M)\), this is a contradiction.
    A similar contradiction arises if we use \(a\) as the degree \(3\) vertex instead
    since we would then have that \(d\) and \(a\) are adjacent.
    Hence, case 1 is impossible.

    \textbf{Case 2:} \(K_3 \sqcup K_1\)

    Since \(a\) and \(b\) are already adjacent,
    exactly one of \(d\) or \(w\) belongs to \(K_3\).
    If \(d\) belongs to \(K_3\), then we have to connect \(a\) to \(d\) contradicting
    that \(d\) is isolated in \(\Gamma - V(M)\).
    So, suppose \(w\) belongs to \(K_3\) (see Figure \ref{fig:lem:exclusive-subgraphs:6}).

    \begin{figure}[h!]
        \centering
        \begin{tikzpicture}
            \node [present] (a) at (0,0) {\(a\)};
            \node [present] (b) at (0,1) {\(b\)};
            \node [absent] (c) at (1,0) {\(c\)};
            \node [present] (d) at (2,1) {\(d\)};
            \node [absent] (v) at (2,0) {\(v\)};
            \node [present] (w) at (3,0) {\(w\)};
            \draw (a) -- (b);
            \draw[dashed] (b) -- (c);
            \draw[dashed] (c) -- (a);
            \draw[dashed] (v) -- (w);
            \draw[dashed] (v) -- (c);
            \draw[dashed] (v) -- (d);

            \draw (w) to [out=270, in=270] (a);

            \draw (w) -- (b);
        \end{tikzpicture}
        \caption{}
        \label{fig:lem:exclusive-subgraphs:6}
    \end{figure}

    Swapping \(cv\) for \(dv\), the resulting graph cannot be either \(K_{1,3}\) or \(K_3 \sqcup K_1\).
    Hence, case 2 is also impossible.
\end{proof}
\subsection{The Pseudomanifolds}
\label{subsection:pseudomanifolds}
Theorem \ref{thm:exclusive-subgraphs} gives \(\Gamma\) enough symmetry for us to be able to conclude what \(\Gamma\) must be. However, the arguments to do this are technical and do not rely on the assumption that \(\DConf_n(\Gamma)\) is an \(m\)-pseudomanifold. So, we state the relevant facts now and postpone their proofs until Section \ref{section:special-subgraphs}.

\begin{lem}
    \label{lem:K_r-is-special}
If \(\Gamma - V(M) \cong K_r\) for any \(m\)-matching \(M\) where \(r \ge 3\), then \(\Gamma \cong K_{2m+r}\).
\end{lem}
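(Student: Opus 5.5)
The plan is to show directly that every pair of distinct vertices of \(\Gamma\) is adjacent. First a count: since every \(m\)-matching \(M\) covers \(2m\) vertices and \(\Gamma - V(M) \cong K_r\), the graph \(\Gamma\) has exactly \(2m+r\) vertices. Here we assume, as is implicit in the statement, that at least one \(m\)-matching exists; for \(m=0\) the claim is immediate. So once all pairs are shown adjacent, simplicity of \(\Gamma\) gives \(\Gamma \cong K_{2m+r}\).

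The single tool is an edge swap. Take an \(m\)-matching \(M\), an edge \(uu' \in M\), and an edge \(yz\) of the clique \(\Gamma - V(M)\). Since \(y,z \notin V(M)\), the set \(M' = (M\setminus\{uu'\})\cup\{yz\}\) is again an \(m\)-matching. Moreover, \(\Gamma - V(M')\) contains \(u\), \(u'\), and every vertex of \(\Gamma - V(M)\) other than \(y,z\). By hypothesis \(\Gamma - V(M')\) is complete, so all of these vertices are pairwise adjacent.

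Fix distinct vertices \(u,x\) and an \(m\)-matching \(M\). We split into cases.

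\textbf{Case (a): \(u,x \notin V(M)\).} Both lie in \(\Gamma - V(M)\cong K_r\), so they are adjacent.

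\textbf{Case (b): \(u \in V(M)\) and \(x \notin V(M)\).} Let \(uu'\in M\). The clique \(\Gamma - V(M)\) has at least \(r-1\ge 2\) vertices other than \(x\), so we may choose an edge \(yz\) of the clique avoiding \(x\). Swapping \(uu'\) for \(yz\) places both \(u\) and \(x\) in \(\Gamma - V(M')\), hence \(u\) and \(x\) are adjacent. This is the step where \(r \ge 3\) is used.

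\textbf{Case (c): \(u,x \in V(M)\).} If \(ux \in M\), we are done. Otherwise let \(uu', xx' \in M\) be distinct edges. First swap \(uu'\) for any edge of \(\Gamma - V(M)\) to obtain \(M'\). Now \(u \notin V(M')\) while \(x \in V(M')\), so case (b), applied to \(M'\) with the roles of \(u\) and \(x\) exchanged, shows that \(u\) and \(x\) are adjacent.

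The only potential obstacle is case (c) when \(r=3\): one cannot swap out both \(uu'\) and \(xx'\) simultaneously, because \(K_3\) has no two disjoint edges. Doing the swaps sequentially, as above, circumvents this, since each step only needs one edge of the current clique avoiding one prescribed vertex.
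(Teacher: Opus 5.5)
Your proof is correct and follows essentially the same route as the paper's. Both count the vertices as \(2m+r\) and handle the mixed case by swapping the matching edge at the covered vertex for a clique edge avoiding the other vertex (this is where \(r\ge 3\) is used). Both also reduce the case of two covered vertices to the mixed case by first freeing one of them. You additionally handle explicitly the subcase where the two vertices are matched to each other, which the paper's Case 2 passes over silently.
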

\begin{proof}
See subsection \ref{subsection:K_r-is-special}.
\end{proof}

\begin{lem}
\label{lem:K_p,q-is-special}
If \(\Gamma - V(M) \cong K_{p, q}\) for any \(m\)-matching \(M\)
where \(p \ge q \ge 1\) such that \(p + q \ge 4\), then \(\Gamma \cong K_{m+p, m+q}\).
\end{lem}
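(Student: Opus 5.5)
I would argue by induction on \(m\), implicitly assuming, as the statement intends, that \(\Gamma\) has at least one \(m\)-matching. For \(m = 0\) the only matching is \(\emptyset\), and the hypothesis reads \(\Gamma \cong K_{p,q}\), which is the claim.

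\textbf{Inductive step: reduce to \(m-1\).} Suppose \(m \ge 1\). Fix an \(m\)-matching \(M\), an edge \(e = xy \in M\), and put \(\Gamma' = \Gamma - \{x,y\}\). I claim \(\Gamma'\) satisfies the hypothesis with \(m-1\) in place of \(m\).
\begin{enumerate*}[label=(\roman*)]
\item \(\Gamma'\) has an \((m-1)\)-matching, namely \(M \setminus \{e\}\).
\item For any \((m-1)\)-matching \(N\) of \(\Gamma'\), the set \(N \cup \{e\}\) is an \(m\)-matching of \(\Gamma\).
\item Therefore \(\Gamma' - V(N) = \Gamma - V(N \cup \{e\}) \cong K_{p,q}\).
\end{enumerate*}
By induction, \(\Gamma' \cong K_{m-1+p,\,m-1+q}\), with parts \(A\) and \(B\) where \(\abs{A} = m-1+p\) and \(\abs{B} = m-1+q\). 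It remains to find the neighbourhoods of \(x\) and \(y\) in \(A \cup B\).

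\textbf{Using edges of \(\Gamma'\).} Take any \(a \in A\) and \(b \in B\), so \(ab\) is an edge. It lies in an \(m\)-matching of \(\Gamma\): take \(ab\), \(xy\), and \(m-2\) disjoint edges of \(\Gamma' - \{a,b\} \cong K_{m-2+p,\,m-2+q}\), which exist since \(q \ge 1\). Applying the same reduction to the edge \(ab\) gives
\[
\Gamma - \{a,b\} \cong K_{m-1+p,\,m-1+q}.
\]
This graph contains \(\Gamma' - \{a,b\} \cong K_{m-2+p,\,m-2+q}\) together with \(x\) and \(y\). Comparing part sizes, \(x\) and \(y\) must lie in opposite parts, which is consistent with the edge \(xy\).

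Whenever \(\Gamma' - \{a,b\}\) is connected with both sides nonempty, its bipartition is forced. One of \(x,y\) then joins the \(A\)-side and the other the \(B\)-side. So, relabelling if necessary, \(x\) is adjacent to exactly \(B \setminus \{b\}\) and \(y\) to exactly \(A \setminus \{a\}\) among the vertices of \(\Gamma' - \{a,b\}\). Letting \((a,b)\) range over all of \(A \times B\):
\begin{itemize}
\item overlapping choices pin down the orientation consistently, because the sets \(B\setminus\{b\}\) overlap when \(\abs{B} \ge 2\), and similarly for \(A\);
\item every vertex of \(A \cup B\) is covered.
\end{itemize}
This gives \(N(x) = B \cup \{y\}\) and \(N(y) = A \cup \{x\}\). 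Hence \(\Gamma\) is complete bipartite with parts \(A \cup \{x\}\) and \(B \cup \{y\}\), of sizes \(m+p\) and \(m+q\).

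\textbf{Main obstacle: degenerate cases.} The difficulty is when \(\Gamma' - \{a,b\}\) has an empty side, so its bipartition (and hence the orientation) is not determined. Since \(p \ge q\) and \(p+q \ge 4\), this happens exactly when \(m = 1\) and \(q = 1\). Then \(\Gamma' = K_{p,1}\) with centre \(b\), and \(\Gamma' - \{a,b\}\) consists of \(p-1 \ge 2\) isolated vertices. In \(\Gamma - \{a,b\} \cong K_{p,1}\), one of \(x,y\) must be the centre, adjacent to all \(p-1\) of those vertices and to the other of \(x,y\), while the other has no further neighbours. Varying \(a\) over the \(p \ge 3\) leaves, the overlaps of the sets \(A \setminus \{a\}\) force the same vertex, say \(y\), to be the centre every time. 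Thus \(y\) is adjacent to all of \(A\) and \(x\) has no neighbours in \(A\). To decide whether \(x\) and \(y\) are adjacent to \(b\), I would apply the hypothesis to the \(1\)-matchings \(\{xy\}\), \(\{ab\}\) and \(\{ya\}\), and check that \(\Gamma \cong K_{1+p,\,2}\). Only the adjacencies between \(\{x,y\}\) and \(b\) remain open here, so this finite case check is the part I expect to need the most care.
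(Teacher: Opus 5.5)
Your induction on \(m\) is a genuinely different route from the paper's. The paper fixes one matching \(M_0\) and swaps each \(v_iw_i\in M_0\) for \(a_{m+1}b_{m+1}\) to sort the matched vertices into \(A\) and \(B\). It then proves, in three separate steps that each need a \(q=1\) versus \(q\ge 2\) split and several auxiliary matchings, that \(A\) is independent, that \(A\) and \(B\) are completely joined, and that \(B\) is independent.

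Your key observation is that the hypothesis passes to \(\Gamma-V(e)\) for every edge \(e\) lying in some \(m\)-matching. This reduces the whole problem to attaching one edge \(xy\) to a known \(K_{m-1+p,\,m-1+q}\). It also isolates the single place where \(p+q\ge 4\) is used; for instance, \(C_5\) satisfies the hypothesis with \((p,q)=(2,1)\) and \(m=1\).

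Your nondegenerate case is correct, with two small repairs. First, for \(m=1\) the matching containing \(ab\) is just \(\{ab\}\). Second, orientation consistency should be argued by chaining \((a,b)\to(a,b')\to(a',b')\). Pairs sharing a coordinate agree because they share the nonempty set \(A\setminus\{a\}\) or \(B\setminus\{b'\}\). The sets \(B\setminus\{b\}\) and \(B\setminus\{b'\}\) themselves are disjoint when \(\abs{B}=2\), so "overlap" alone does not do it.

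The gap is the degenerate case \(m=1\), \(q=1\), which you leave as a check. The check you propose cannot close it. Each of \(\{xy\}\), \(\{ab\}\), \(\{ya\}\) deletes \(y\) or \(b\), so none of them ever sees the pair \(yb\). Concretely, take \(K_{p+1,2}\) with parts \(A\cup\{x\}\) and \(\{y,b\}\) and add the edge \(yb\). This graph passes all three of your tests, yet it is not \(K_{p+1,2}\). You need two further facts.
\begin{enumerate*}[label=(\roman*)]
\item In \(\Gamma-\{y,a\}\cong K_{p,1}\), the vertex \(b\) has the \(p-1\ge 2\) neighbours \(A\setminus\{a\}\). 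So \(b\) is the centre of the star, and hence \(x\sim b\).
\item If \(yb\) were an edge, then \(\{yb\}\) would itself be a \(1\)-matching. But \(\Gamma-\{y,b\}\) is edgeless on \(A\cup\{x\}\), since \(A\) is independent and \(x\) has no neighbours in \(A\). This contradicts \(\Gamma-\{y,b\}\cong K_{p,1}\).
\end{enumerate*}
With \(x\sim b\) and \(y\not\sim b\), you get \(\Gamma\cong K_{p+1,2}\), and the induction goes through.
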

\begin{proof}
See subsection \ref{subsection:K_p,q-is-special}.
\end{proof}

\begin{lem}
\label{lem:K_3-cup-K_1-is-special}
If \(\Gamma - V(M) \cong K_3 \sqcup K_1\) for any \(m\)-matching \(M\),
then \(\Gamma \cong K_{p} \sqcup K_{q}\) for some odd \(p\) and \(q\) such that \(p + q = 2m + 4\).
\end{lem}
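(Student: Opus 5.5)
I would prove this by induction on \(m\), peeling off one matching edge at a time. For \(m = 0\) the only \(0\)-matching is \(\emptyset\), so \(\Gamma \cong K_3 \sqcup K_1\), which is \(K_p \sqcup K_q\) with \(p = 3\), \(q = 1\) and \(p + q = 4\).

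\textbf{Inductive step.} Suppose the statement holds for \(m-1\), where \(m \ge 1\). Since \(\mathcal{M}_m(\Gamma) \neq \emptyset\), fix an edge \(e = xy\) lying in some \(m\)-matching, and set \(\Gamma_e = \Gamma - \{x, y\}\). Every \((m-1)\)-matching \(M'\) of \(\Gamma_e\) gives an \(m\)-matching \(M' \cup \{e\}\) of \(\Gamma\), and
\[
\Gamma_e - V(M') = \Gamma - V(M' \cup \{e\}).
\]
So \(\Gamma_e\) satisfies the hypothesis with \(m-1\). By induction, \(\Gamma_e \cong K_{p'} \sqcup K_{q'}\) with \(p', q'\) odd and \(p' + q' = 2m + 2\). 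Write \(A\) and \(B\) for the two cliques, with \(|A| \ge 3\); this is possible since \(p' + q' \ge 4\) and both are odd.

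It remains to show that \(x\) and \(y\) are each adjacent to every vertex of exactly one of \(A, B\), the same one, and to nothing in the other. Then \(\Gamma \cong K_{|A|+2} \sqcup K_{|B|}\) or \(K_{|A|} \sqcup K_{|B|+2}\), both sizes are odd, and the total is \(2m + 4\).

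\textbf{Determining the neighbourhoods of \(x\) and \(y\).} I would do this by exchanging \(e\) for a different edge. Pick an edge \(f = a_1 a_2\) inside \(A\); this is possible since \(|A| \ge 3\). Then \(f\) lies in an \(m\)-matching (extend by a near-perfect matching of the remaining cliques), so the argument above applied to \(f\) shows that \(\Gamma - \{a_1, a_2\}\) is a disjoint union of two odd cliques. This graph contains \(x\), \(y\), \(A \setminus \{a_1, a_2\}\) and \(B\).

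Since \(B\) is a clique and \(A \setminus \{a_1, a_2\}\) is a clique with no edges to \(B\), each of them lies inside one of the two cliques, and they lie in different cliques unless \(A \setminus \{a_1, a_2\}\) is empty. The vertices \(x\) and \(y\) are adjacent, so they lie in a common clique \(C\), and they must be joined to every vertex of \(C\) and to nothing outside it.

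\begin{itemize}
\item If \(C \supseteq B\), then \(x\) and \(y\) are complete to \(B\) and anticomplete to \(A \setminus \{a_1, a_2\}\).
\item If \(C \supseteq A \setminus \{a_1, a_2\}\), then \(x\) and \(y\) are complete to \(A \setminus \{a_1, a_2\}\) and anticomplete to \(B\).
\end{itemize}

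Varying the edge \(f\) inside \(A\) covers every vertex of \(A\), and using an edge inside \(B\) when \(|B| \ge 3\) gives the symmetric information. Combining these, \(x\) and \(y\) are complete to one of \(A, B\) and anticomplete to the other.

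\textbf{Main obstacle.} The difficulty is consistency in the degenerate small cases.

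\begin{itemize}
\item When \(|A| = 3\), removing \(f\) leaves a single vertex of \(A\). That singleton could merge into \(C\) or form the \(K_1\)-type clique, so the two bullet cases above no longer separate cleanly.
\item When \(|B| = 1\), there is no edge inside \(B\) to exchange with.
\item I must also rule out \(x\) attaching to \(A\) while \(y\) attaches to \(B\). This configuration is excluded because \(x\) and \(y\) lie together in \(C\).
\end{itemize}

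I expect to handle these by a short case check. First use two different edges \(f, f'\) in \(A\) and compare the resulting decompositions. Then, when \(|A| = 3\) and \(|B| = 1\) (that is, \(m = 1\)), verify directly on the six vertices that every edge removal yielding \(K_3 \sqcup K_1\) forces \(\Gamma \cong K_5 \sqcup K_1\) or \(K_3 \sqcup K_3\).
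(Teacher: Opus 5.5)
Your proof is correct, and it takes a genuinely different route from the paper.

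\textbf{How the paper argues.} The paper works at a fixed $m$ by edge exchanges. It first shows $\Gamma$ is disconnected, by repeatedly shortening a path from the $K_3$ to the isolated vertex. It then shows that every component has odd order and that there are exactly two components. Finally it shows each component is complete, appealing to Lemma~\ref{lem:K_r-is-special} when $q=1$ and to a separate exchange argument when $q\ge 3$.

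\textbf{How your route differs.} You instead exploit that the hypothesis is hereditary: for any edge $e$ lying in some $m$-matching, $\Gamma - V(\{e\})$ satisfies it with $m-1$. This reduces everything to locating the two endpoints of one edge relative to two already-known cliques. It dispenses with the path-shortening step, the parity and component counting, and the appeal to Lemma~\ref{lem:K_r-is-special}. The only cost is checking that each edge $f$ of $A$ lies in an $m$-matching, which you do.

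\textbf{What each buys.} The paper's argument does not need the statement for smaller $m$, and it showcases the swap technique used throughout Section~\ref{section:pseudomanifolds}. Your heredity trick is also reusable. Essentially the same argument seems to handle $\Lambda = K_3 \sqcup nK_1$ in the conjecture of Section~\ref{section:questions}: there $A\setminus\{a_1,a_2\}$ and the remaining cliques are still pairwise anticomplete nonempty cliques, so they must occupy distinct components.

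\textbf{Your ``main obstacle'' does not arise.} No small-case verification is needed.
\begin{itemize}
\item For a fixed $f$, your two bullets are exhaustive and mutually exclusive even when $|A|=3$. The singleton $A\setminus\{a_1,a_2\}$ is still a nonempty clique anticomplete to $B$, so it either forms its own component or joins $x,y$.
\item Which bullet occurs is decided by whether $x$ is adjacent to a fixed vertex $b\in B$. This is a property of $\Gamma$ that does not depend on $f$, so the same bullet holds for every edge $f$ of $A$.
\item Since $|A|\ge 3$, every vertex of $A$ avoids some edge of $A$. This gives either ``complete to $A$, anticomplete to $B$'' or the reverse.
\item Edges inside $B$ are never used, so $|B|=1$ is harmless.
\item Splitting $x$ and $y$ between $A$ and $B$ is excluded by $x\sim y$, as you note.
\end{itemize}
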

\begin{proof}
See subsection \ref{subsection:K_3-cup-K_1-is-special}.
\end{proof}

Since Theorem \ref{thm:exclusive-subgraphs} also tells us what \(n\) is, we now have the tools necessary to express exactly when \(\DConf_n(\Gamma)\) is an \(m\)-pseudomanifold, i.e.\ we can now prove Theorem \ref{thm:m-pseudomanifolds}.

\begin{proof}[Proof of Theorem \ref{thm:m-pseudomanifolds}]
If \(m = 1\) and \(n \in \{1, \abs{V(\Gamma)} - 1\}\), then Lemma
\ref{lem:easy-only-cycles} guarantees that
\(\Gamma\) must be the finite disjoint union of cycle graphs.
%That is, \(\Gamma \cong \bigsqcup_{k \in I} C_k\) for some finite multiset of integers \(\ge 3\).
Otherwise, Theorem \ref{thm:exclusive-subgraphs} guarantees that there exists some graph \(\Lambda \in \{K_3, K_{2,2}, K_{1,3}, K_{3}\sqcup K_1\}\) such that \(\Gamma - V(M) \cong \Lambda\) for every \((m-1)\)-matching \(M\) in \(\Gamma\).
For each possible case of \(\Lambda\), Theorem \ref{thm:exclusive-subgraphs} tells us what \(n\) can be, and either Lemma \ref{lem:K_r-is-special}, Lemma \ref{lem:K_p,q-is-special}, or Lemma \ref{lem:K_3-cup-K_1-is-special} tells us what \(\Gamma\) is.
\end{proof}

\subsection{The Manifolds}
\label{subsection:manifolds}
Theorem \ref{thm:m-pseudomanifolds} describes exactly when \(\DConf_n(\Gamma)\) is an \(m\)-pseudomanifold. We now investigate when \(\DConf_n(\Gamma)\)
satisfies the more stringent requirement of being an \(m\)-manifold.
By analyzing the link of certain cells in \(\DConf_n(\Gamma)\), Abrams showed the following.

\begin{prop}[Theorem 5.5 in \cite{abrams2000configurationspaces}]
  \label{prop:abrams-non-manifolds}
\(\DConf_m(K_{m+1, m+1})\) and \(\DConf_{m}(K_{2m+1})\) are not \(m\)-manifolds when \(m > 2\).
\end{prop}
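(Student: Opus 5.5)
We will show that \(\DConf_m(K_{2m+1})\) and \(\DConf_m(K_{m+1,m+1})\) fail to be manifolds at a vertex, i.e.\ a \(0\)-cube. The argument is a link computation. In a cube complex, a point of an open \(m\)-manifold has a link homeomorphic to \(S^{m-1}\). So it suffices to find a vertex \(p\) of \(\DConf_n(\Gamma)\) whose link \(\mathrm{lk}(p)\), a simplicial complex, is not homeomorphic to \(S^{m-1}\) when \(m>2\). For \(m>2\), \(S^{m-1}\) is simply connected and has \(H_{m-1}\cong\mathbb{Z}\), so we look for a link that is disconnected, or not simply connected, or has the wrong top homology.

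\textbf{Describing the link.} Let \(p\) be a vertex, meaning \(n=m\) particles sit on vertices \(x_1,\dots,x_m\), and let \(W=V(\Gamma)\setminus\{x_i\}\) be the set of free vertices. The vertices of \(\mathrm{lk}(p)\) are the single moves: particle \(i\) slides along an edge \(x_iy\) with \(y\in W\). A set of such moves spans a simplex exactly when the moving particles are distinct and the chosen edges are pairwise disjoint, since only then do the particles move simultaneously and span a cube. Moving particle \(i\) to \(y\) and particle \(j\) to \(y'\) requires \(i\neq j\) and \(y\neq y'\). There is no further adjacency obstruction between edges \(x_iy\) and \(x_jy'\) as long as their endpoints are distinct, and this holds for \(y\neq y'\). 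Hence \(\mathrm{lk}(p)\) is the flag complex of partial matchings between the set of particles \(\{1,\dots,m\}\) and \(W\) in the bipartite graph \(B\) with edges \(\{(i,y): x_iy\in E(\Gamma)\}\). In other words, it is a matching complex (chessboard complex) of \(B\).

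\textbf{Computing the two cases.} For \(K_{2m+1}\) we have \(|W|=m+1\) and \(B=K_{m,m+1}\), so the link is the chessboard complex \(\Delta_{m,m+1}\). For \(K_{m+1,m+1}\) we place particles so that as many as possible have neighbors. For instance, put all \(m\) particles on one side, so that \(W\) consists of one vertex on that side and the \(m+1\) vertices on the other side. Then each particle has \(m+1\) available moves, \(B=K_{m,m+1}\) again, and the link is again \(\Delta_{m,m+1}\), with \(1\)-simplices (moves of one particle to one vertex) on the vertex side. The needed fact is that \(\Delta_{m,m+1}\) is an \((m-1)\)-dimensional pseudomanifold that is not a sphere for \(m\ge3\). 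Its Euler characteristic, computed as in Lemma \ref{lem:eulercharacteristic}, gives a direct certificate. We have
\[
\chi(\Delta_{m,m+1})=\sum_{k\ge1}(-1)^{k-1}\binom{m}{k}\binom{m+1}{k}k!,
\]
and we compare this with \(\chi(S^{m-1})=1+(-1)^{m-1}\). For \(m=3\) the sum is \(12-36+24=0\), which matches \(S^2\)'s value of \(2\) only if it equals \(2\); it does not, so the link is not \(S^2\). For general \(m\) we would instead use the known fact, due to Björner, Lovász, Vrećica and Živaljević, that \(\Delta_{m,m+1}\) is an orientable pseudomanifold with \(\tilde H_{m-1}\) of rank greater than \(1\) (for \(m=3\) it is a torus).

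\textbf{Main obstacle.} The main obstacle is the general-\(m\) identification of \(\mathrm{lk}(p)\) as a non-sphere. The Euler characteristic route alone may coincide with that of a sphere for some \(m\). In that case we would fall back on one of two arguments. The first is to check that the link of an \((m-2)\)-simplex is a cycle but that \(\Delta_{m,m+1}\) has more top-dimensional cells than a sphere allows, e.g.\ via a rank computation of \(\tilde H_{m-1}\) on \(\mathbb{Z}/2\). The second, which we try first, is an inductive argument: the link of a vertex of \(\Delta_{m,m+1}\) is \(\Delta_{m-1,m}\). This reduces the problem to the base case \(m=3\), where the link of \(p\) is the \(3\times4\) chessboard complex, a torus rather than \(S^2\). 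Since a link in a manifold must itself be a sphere whose sublinks are spheres, a non-sphere \(\Delta_{3,4}\) appearing as an iterated link inside \(\mathrm{lk}(p)\) already rules out the manifold property for every \(m>2\).
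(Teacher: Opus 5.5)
Your argument is correct and is essentially the paper's. The paper attributes it to Abrams and repeats it in Proposition \ref{prop:remaining-non-manifolds2}. The iterated link of an \((m-4)\)-simplex in \(\mathrm{lk}(p)\cong\Delta_{m,m+1}\) is exactly the link of an \((m-3)\)-cube of \(\DConf_m(\Gamma)\). In that cube, \(m-3\) particles move and three particles remain on \(K_7\), or on one side of \(K_{4,4}\). Its link is \(\Delta_{3,4}\), with \(\chi=12-36+24=0\). You could go straight to that cube and skip the vertex link.

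Two of your supporting claims are wrong and should be dropped or repaired.

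\textbf{The rank claim.} The ``known fact'' that \(\tilde H_{m-1}(\Delta_{m,m+1})\) has rank greater than \(1\) is false. \(\Delta_{m,m+1}\) is a strongly connected orientable pseudomanifold, so its top homology is \(\mathbb{Z}\), just as for a sphere; your own torus \(\Delta_{3,4}\) has \(H_2\cong\mathbb{Z}\). For the same reason, your first fallback (a \(\mathbb{Z}/2\)-rank count in top degree) cannot tell it apart from a sphere.

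\textbf{The link principle.} The statements that a point of an \(m\)-manifold has link homeomorphic to \(S^{m-1}\), and that ``sublinks are spheres,'' hold for PL manifolds. They fail for complexes merely homeomorphic to manifolds. For example, the double suspension of a homology \(3\)-sphere is homeomorphic to \(S^5\), yet some of its vertex links are not manifolds.

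The correct tool is local homology. If \(x\) lies in the interior of a \(k\)-cube \(\sigma\), a neighborhood of \(x\) is an open cone on \(S^{k-1}*\mathrm{lk}(\sigma)\). Hence \(H_i(X,X\setminus x)\cong\tilde H_{i-k-1}(\mathrm{lk}(\sigma))\), so in an \(m\)-manifold \(\mathrm{lk}(\sigma)\) must have the homology of \(S^{m-k-1}\). For \(k=m-3\) this forces \(\chi(\mathrm{lk}(\sigma))=2\), which \(\chi(\Delta_{3,4})=0\) contradicts. With this justification your argument is a complete proof for every \(m>2\).

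(Minor: single moves are the \(0\)-simplices of the link, not \(1\)-simplices.)
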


Using the same approach, we find a similar problem happens with the other two families.
\begin{prop}
  \label{prop:remaining-non-manifolds2}
  \(\DConf_{m+1}(K_{2m + 1} \sqcup K_1)\) and
  \(\DConf_{m+1}(K_{m, m+2})\) are not \(m\)-manifolds
  when \(m > 2\).
\end{prop}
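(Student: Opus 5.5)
For the first space, the plan is to reduce to Proposition \ref{prop:abrams-non-manifolds}; for the second, to compute the link of one well-chosen vertex and check it is the same complex Abrams analyzed.

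\textbf{The space \(\DConf_{m+1}(K_{2m+1}\sqcup K_1)\).} Let \(u\) be the vertex of \(K_1\). Consider configurations in which some particle \(i\) sits at \(u\). Since \(u\) has no incident edges, particle \(i\) can never move. The other \(m\) particles then range over \(\DConf_m(K_{2m+1})\) with the particles relabelled. So the set of such configurations, for a fixed \(i\), is a union of path components of \(\DConf_{m+1}(K_{2m+1}\sqcup K_1)\), and it is homeomorphic to \(\DConf_m(K_{2m+1})\). Being a manifold is a local property, and a union of components is open. Hence if \(\DConf_{m+1}(K_{2m+1}\sqcup K_1)\) were an \(m\)-manifold, so would \(\DConf_m(K_{2m+1})\) be. This contradicts Proposition \ref{prop:abrams-non-manifolds} for \(m>2\).

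\textbf{The space \(\DConf_{m+1}(K_{m,m+2})\).} Write the bipartition as \(A\sqcup B\) with \(\abs{A}=m\) and \(\abs{B}=m+2\). Take the vertex \(p\) of the cube complex in which all \(m+1\) particles sit on distinct vertices of \(B\). One vertex \(b_0\in B\) and all of \(A\) are unoccupied.

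I will describe the link of \(p\) using the correspondence between cubes and matchings from Subsection \ref{subsection:euler}.
\begin{enumerate}
\item A cube containing \(p\) corresponds to a set of particles moving simultaneously along pairwise disjoint edges. Each closed edge must also avoid the stationary particles.
\item A particle at \(b\in B\) can only move along an edge \(ba\) with \(a\in A\). Such an edge meets no other occupied vertex, because all other particles lie in \(B\) and \(B\) is independent.
\item Two such moves \(b_1a_1\) and \(b_2a_2\) are compatible exactly when \(b_1\neq b_2\) and \(a_1\neq a_2\).
\end{enumerate}
Hence the link of \(p\) is the chessboard complex on an \((m+1)\times m\) board: its simplices are the partial matchings between the \(m+1\) occupied vertices of \(B\) and the \(m\) vertices of \(A\).

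The key observation is that this is the same link Abrams encountered for \(\DConf_m(K_{m+1,m+1})\). There, placing the \(m\) particles on one side leaves \(m+1\) free vertices on the other side, giving the chessboard complex on an \(m\times(m+1)\) board, which is the same complex with the board transposed. So the local obstruction in Abrams' proof of Theorem 5.5 applies verbatim: the local homology at \(p\) is not that of \(\mathbb{R}^m\) once \(m>2\).

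If I cannot import Abrams' argument directly, I will verify the obstruction by hand. Since \(p\) is a vertex, \(H_*(X,X\setminus p)\cong\tilde H_{*-1}(\mathrm{Lk}(p))\). So it suffices to show the chessboard complex \(\Delta_{m+1,m}\) is not a homology \((m-1)\)-sphere.
\begin{enumerate}
\item For \(m=3\), count faces: \(12\) vertices, \(36\) edges, \(24\) triangles. This gives \(\chi=0\neq 2\). In fact the complex is a torus.
\item For \(m>3\), the link in \(\Delta_{m+1,m}\) of an \((m-4)\)-simplex is \(\Delta_{4,3}\), a torus. A homology manifold would need all such links to be homology spheres, so \(\mathrm{Lk}(p)\) is not a homology sphere.
\item Alternatively, I could compute \(\chi(\Delta_{m+1,m})=\sum_{k\ge1}(-1)^{k-1}\binom{m+1}{k}\binom{m}{k}k!\) and compare it with \(1+(-1)^{m-1}\).
\end{enumerate}

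The main obstacle is the second case: making rigorous that the link of \(p\) rules out a topological, and not merely PL, manifold structure. I expect to handle this through the local homology computation above, exactly as Abrams did.
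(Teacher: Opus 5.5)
Your argument is correct, and for the first space it takes a different route from the paper. You split \(\DConf_{m+1}(K_{2m+1}\sqcup K_1)\) into clopen pieces and note that each piece with a particle frozen on the isolated vertex is a copy of \(\DConf_m(K_{2m+1})\). Proposition~\ref{prop:abrams-non-manifolds} then finishes the argument. The paper does not reduce to Abrams's result. Its \((m-3)\)-cell has all four stationary particles on the residual \(K_7\), so it lives in the other piece, a copy of \(\DConf_{m+1}(K_{2m+1})\). There it counts \(12-36+24=0\) for the Euler characteristic of the link. Your reduction is shorter. The paper's count is self-contained and shows that this other component also fails to be a manifold. For \(\DConf_{m+1}(K_{m,m+2})\), your identification \(\mathrm{Lk}(p)\cong\Delta_{m+1,m}\) is right, and your fallback is the paper's argument. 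An \((m-4)\)-simplex of \(\mathrm{Lk}(p)\) is exactly the paper's \((m-3)\)-cell with four particles on the \(5\)-side of the residual \(K_{3,5}\), and its link is \(\Delta_{4,3}\).

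Two small repairs. First, your appeal to Abrams's proof ``verbatim'' can be justified from the statement of Proposition~\ref{prop:abrams-non-manifolds} alone, provided you compare with \(\DConf_m(K_{2m+1})\) rather than \(\DConf_m(K_{m+1,m+1})\). Every vertex of \(\DConf_m(K_{2m+1})\) has link \(\Delta_{m,m+1}\), and the open vertex stars cover that space. So the open cone on \(\Delta_{m,m+1}\) is not an \(m\)-manifold; otherwise \(\DConf_m(K_{2m+1})\) would be locally Euclidean. The open star of \(p\) is an open subset of \(\DConf_{m+1}(K_{m,m+2})\) homeomorphic to that cone, which gives the result. This shortcut is not available from \(\DConf_m(K_{m+1,m+1})\), because its vertices do not all have this link.

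Second, for \(m>3\), knowing that \(\mathrm{Lk}(p)\) is not a homology manifold says nothing about \(H_*(X,X\setminus p)\), which only sees the homology of \(\mathrm{Lk}(p)\). Use an interior point \(x\) of the corresponding \((m-3)\)-cube instead. There \(H_k(X,X\setminus x)\cong\tilde H_{k-m+2}(\Delta_{4,3})\), which cannot agree with \(\tilde H_{k-m+2}(S^2)\) since \(\chi(\Delta_{4,3})=0\neq 2\). Your claim that the local homology at \(p\) itself is wrong for every \(m>2\) is only verified for \(m=3\), but you do not need it.
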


\begin{proof}
  We follow Abrams' approach by looking at the link of an \((m-3)\)-cell in \(\DConf_n(\Gamma)\).
  An \((m-3)\)-cell corresponds to a configuration of \(m-3\) particles moving simultaneously on \(\Gamma\).
  By examining the movements of the remaining \(n - (m-3)\) particles, 
  we can determine the Euler characteristic of the link of this cell.
  Specifically, the number of vertices, edges, and faces in the link correspond to configurations of \(n - (m-3)\)
  particles where \(1\), \(2\), and \(3\) particles are allowed to move simultaneously, respectively.
  We do not need to consider more than \(3\) particle movements since otherwise we would have
  \(m+1\) simultaneous movements contradicting Lemma \ref{lem:special-edges}.

  We first consider \((m-3)\)-cells of \(\DConf_{m+1}(K_{2m+1}\cup K_1)\). 
  Since if the link of any \((m-3)\)-cell fails to be a \(2\)-sphere, the space cannot be an \(m\)-manifold,
  we only consider the link of the \((m-3)\)-cell corresponding to \(n - (m - 3) = 4\) particles placed
  on \(K_{2m+1} \cup K_1 - V(M) = K_7\cup K_1\) (where \(M\) is some \((m-3)\)-matching).
  After placing \(4\) particles on the \(K_7\) component of \(K_7 \cup K_1\),
  there are \(12\), \(36\), and \(24\) ways for \(1\), \(2\), and \(3\) of these particles to move simultaneously, respectively.
  The discussion following Definition \ref{defn:matching-set} explains how to compute these numbers.
  Hence the Euler characteristic of the link is \(12 - 36 + 24 = 0\) meaning, the link is not a \(2\)-sphere.

  Next we consider \((m-3)\)-cells of \(\DConf_{m+1}(K_{m, m+2})\).
  Again we only look at the link of one kind of \((m-3)\)-cell, namely 
  \(n - (m - 3) = 4\) particles placed on the \(5\)-partition of \(K_{m, m+2} - V(M) = K_{3,5}\) (where \(M\) is some \((m-3)\)-matching).
  There are also \(12\), \(36\), and \(24\) ways for \(1\), \(2\), and \(3\) of these particles to move simultaneously, respectively.
  So, again the link is not a \(2\)-sphere.
\end{proof}

Lemma \ref{lem:special-edges} is not strong enough to determine whether \(\DConf_n(\Gamma)\) is a manifold. Specifically, Lemma \ref{lem:special-edges} requires that if we place \(m-1\) particles on \(\Gamma\)
such that each particle is able to independently move, there should be only two ways the remaining \(n - (m - 1)\) particles can move. Although \(K_p \sqcup K_q\) passes this test, if instead we first place down our \(n\) particles, then ask if the possible particle movements are conducive to \(\DConf_n(K_p\sqcup K_q)\) being a manifold, we find a problem if both \(p\) and \(q\) are greater than 1.

\begin{prop}
\label{prop:remaining-non-manifolds1}
Let \(p\) and \(q\) be odd positive integers such that \(p + q = 2m + 2\).
If both \(p\) and \(q\) are greater than \(1\), then
\(\DConf_{m+1}(K_p \sqcup K_q)\) is not an \(m\)-manifold.
\end{prop}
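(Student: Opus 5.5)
The plan is to exhibit a nonempty open-and-closed piece of \(\DConf_{m+1}(K_p \sqcup K_q)\) whose dimension is strictly less than \(m\). Such a piece cannot be locally homeomorphic to \(\mathbb{R}^m\). This is exactly the phenomenon the pseudomanifold test of Lemma \ref{lem:special-edges} cannot see: that test only looks at \((m-1)\)-cells, and a low-dimensional piece contains no \((m-1)\)-cells at all.

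\textbf{Step 1: split by how many particles lie in each component.} Write \(p = 2a+1\) and \(q = 2b+1\), so \(a + b = m\), and \(a, b \ge 1\) because \(p, q > 1\). No particle can ever move from \(K_p\) to \(K_q\) or back. Hence \(\DConf_{m+1}(K_p \sqcup K_q)\) is the disjoint union, over subsets \(S \subseteq \{1,\dots,m+1\}\), of the clopen pieces in which the particles labelled by \(S\) lie in \(K_p\) and the rest lie in \(K_q\). If \(\abs{S} = k\) and \(l = m+1-k\), the piece is isomorphic as a cube complex to \(\DConf_k(K_p) \times \DConf_l(K_q)\). This holds because the discretized-diagonal condition only involves pairs of cells, and cells in different components never have intersecting boundaries.

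\textbf{Step 2: choose a lopsided split and bound its dimension.} Take \(k = a-1 \ge 0\) and \(l = b+2\). Since \(b \ge 1\), we have \(l = b+2 \le 2b+1 = q\), so \(\DConf_l(K_q)\) is nonempty (place the particles on distinct vertices). Also \(\DConf_k(K_p)\) is nonempty; when \(k = 0\) it is a point.

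Next bound the dimensions using the correspondence between cubes and matchings from Subsection \ref{subsection:euler}. A \(j\)-cube of \(\DConf_l(K_q)\) uses \(j\) disjoint edges together with \(l - j\) further vertices, so \(2j + (l-j) \le q\), i.e.\ \(j \le q - l = b-1\). Similarly the cubes of \(\DConf_{a-1}(K_p)\) have dimension at most \(a-1\). So this piece is a nonempty finite cube complex of dimension at most \((a-1)+(b-1) = m-2\).

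\textbf{Step 3: conclude.} Suppose \(\DConf_{m+1}(K_p\sqcup K_q)\) were an \(m\)-manifold. Then this clopen piece would be an open subset of it, hence itself an \(m\)-manifold. But every point \(x\) of a finite cube complex of dimension \(d < m\) satisfies \(H_m(X, X\setminus x) = 0\), since the local homology is computed from the link, which is a complex of dimension \(d-1\). An \(m\)-manifold instead has local homology \(\mathbb{Z}\) in degree \(m\). Alternatively, invariance of domain rules out an open subset of \(\mathbb{R}^m\) embedding in a complex of dimension below \(m\).

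\textbf{Expected obstacle.} The only delicate point is Step 1, namely verifying carefully that the clopen decomposition into products holds at the level of cube complexes, and that the chosen split respects \(k \ge 0\) and \(l \le q\). Both conditions use exactly the hypotheses \(a \ge 1\) and \(b \ge 1\), i.e.\ \(p, q > 1\). This matches the fact that \(K_{2m+1} \sqcup K_1\) is excluded here; it is handled separately in Proposition \ref{prop:remaining-non-manifolds2}.
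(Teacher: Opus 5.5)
Your proof is correct. It rests on the same basic idea as the paper: distribute the particles lopsidedly between the two components so that an entire clopen piece of the space is degenerate. You close the argument differently, though.

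The paper assumes \(p \ge q \ge 3\), fills \(K_q\) with \(q\) particles and puts the remaining \(m+1-q\) on \(K_p\). When \(q = m+1\) this gives an isolated point. Otherwise the paper notes that some particle has \(m+1\) available moves and asserts that \(m+1\) edges at a vertex ``cannot happen in an \(m\)-manifold.'' That criterion is not valid as stated:
\begin{itemize}
\item every vertex of \(\partial I^{m+1} \cong S^m\) has exactly \(m+1\) edges;
\item the paper's own \(\DConf_2(K_{1,3})\) is a \(1\)-manifold with two edges at each vertex;
\item \(\DConf_2(K_5)\) has six edges at \((1,2)\).
\end{itemize}
What actually rules out the paper's configuration is your dimension count. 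The \(q\) particles filling \(K_q\) are frozen, so that piece is \(\DConf_{m+1-q}(K_p)\), of dimension \(\min(k,p-k)\) with \(k = m+1-q\), which equals \(m+1-q \le m-2\).

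Your write-up gains three things over the paper's:
\begin{itemize}
\item an explicitly justified clopen product decomposition;
\item a split \((k,l) = (a-1,b+2)\) that needs neither the normalization \(p \ge q\) nor the separate case \(q = m+1\);
\item a genuine topological obstruction (local homology or invariance of domain) in place of a vertex-degree count.
\end{itemize}
Once repaired by your Step~2 bound, the paper's choice is just another member of the same family of low-dimensional pieces.
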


\begin{proof}
    Suppose \(p \ge q \ge 3\). Substituting \(p = 2m + 2 - q\) into \(q \le p\), we find \(q \le m+1\).
    If \(q = m+1\), then after placing \(m+1\) particles on the \(K_q\) component of \(K_p\sqcup K_q\), there is no way for any particle to move. Such a configuration corresponds to an isolated point in the configuration space which is not locally \(m\)-dimensional. 
    
    Otherwise, if \(q < m+1\), then we must have that \(p > m + 1\). Place \(q\) particles on the \(K_q\) component of \(K_p\sqcup K_q\). Then after placing the remaining \(n - q\) particles on the \(K_p\) component, notice that there are \(p - (n - q)\) vertices that the \(n - q\) particles on the \(K_p\) component can move to. Since \(n - q = (m+1) - q > 0\) and \(p - (n - q) = m+1\), there is at least one particle on the \(K_p\) component that can move to \(m+1\) different vertices. These \(m+1\) possible movements correspond to \(m+1\) different \(1\)-cells in the configuration space sharing a single \(0\)-cell which cannot happen in an \(m\)-manifold.
\end{proof}

The three previous propositions significantly reduce the potential number of \(m\)-manifolds. In fact, every remaining closed \(2\)-pseudomanifold was shown to be a closed \(2\)-manifold in \cite{abrams2000configurationspaces} and \cite{appiah2024algebraicstructurehyperbolicgraph}. 
We include a self-contained proof that each remaining space is a closed manifolds which proves the second main result of the paper.

\begin{proof}[Proof of Corollary \ref{cor:manifold-classification}]
  In light of Theorem \ref{thm:m-pseudomanifolds}, and Propositions
\ref{prop:abrams-non-manifolds}, \ref{prop:remaining-non-manifolds2}, and \ref{prop:remaining-non-manifolds1}, it remains to show that the remaining spaces are actually manifolds. 

  \textbf{Case 1:} 
  Since the one-point discretized configuration space is homeomorphic to the graph itself
  and a disjoint collection of cycles is a \(1\)-manifold, this case holds. 

  \textbf{Case 2:}
  In this case, either one or two particles are placed on the \(3\)-cycle.
  Since placing \(1\) or \(\abs{V(\Gamma)} - 1\) particles on a cycle graph \(\Gamma\)
  admits a 1-manifold for its discretized configuration space, this case holds.

  \textbf{Case 3:} 
  We simply draw the entire discretized configuration space in this case.
  See Figure \ref{fig:cor:manifold-classification:2:nbhd}. The labels on this graph correspond
  to ordered pairs of the vertices labeled in Figure \ref{fig:cor:manifold-classification:2:ygraph}.

  \begin{figure}[h!]
    \centering
    \begin{minipage}[b]{.4\textwidth}
      \centering
      \begin{tikzpicture}[
        mynode/.style={circle, draw},
      ]
        \node (1) at (0, 2) [mynode] {\(1\)};
        \node (2) at (2, 2) [mynode] {\(2\)};
        \node (3) at (1, 1) [mynode] {\(3\)};
        \node (4) at (1, -.3) [mynode] {\(4\)};

        \draw (1) -- (3);
        \draw (2) -- (3);
        \draw (3) -- (4);
      \end{tikzpicture}
      \caption{\(K_{1,3}\)}
      \label{fig:cor:manifold-classification:2:ygraph}
    \end{minipage}
    \begin{minipage}[b]{0.55\textwidth}
      \centering
      \begin{tikzpicture}[
        mynode/.style={circle, draw, fill=gray!20, font=\scriptsize, minimum size=26pt, inner sep=0pt },
      ]
        \node (12) at (0,  2) [mynode] {\(1, 2\)};
        \node (13) at (1.5,  2) [mynode] {\(1, 3\)};
        \node (14) at (3,  2) [mynode] {\(1, 4\)};
        \node (34) at (4.5,  2) [mynode] {\(3, 4\)};
        \node (24) at (6,  2) [mynode] {\(2, 4\)};
        \node (23) at (7.5, 2) [mynode] {\(2, 3\)};

        \node (21) at (7.5, 0) [mynode] {\(2, 1\)};
        \node (31) at (6, 0) [mynode] {\(3, 1\)};
        \node (41) at (4.5, 0) [mynode] {\(4, 1\)};
        \node (43) at (3, 0) [mynode] {\(4, 3\)};
        \node (42) at (1.5, 0) [mynode] {\(4, 2\)};
        \node (32) at (0, 0) [mynode] {\(3, 2\)};

        \draw (12) -- (13) -- (14) -- (34) -- (24) -- (23);
        \draw (21) -- (31) -- (41) -- (43) -- (42) -- (32);

        \draw (12) -- (32);
        \draw (23) -- (21);
      \end{tikzpicture}
      \caption{\(\DConf_2(K_{1,3})\)}
      \label{fig:cor:manifold-classification:2:nbhd}
    \end{minipage}
  \end{figure}

  \textbf{Case 4:}
  In Figure \ref{fig:cor:manifold-classification:3:K5} we label each vertex of \(K_5\).
  The shaded parts in Figure \ref{fig:cor:manifold-classification:3:nbhd} illustrate a neighborhood of \((1,2)\) in \(\DConf_2(K_5)\)
  whose interior locally resembles \(\mathbb{R}^2\). 
  We use the same convention as in case 2 where the labels in each vertex in the figure correspond to an ordered pair
  of vertices in the graph.
  Since any configuration in \(\DConf_2(K_5)\) belongs to the interior of a neighborhood like the one drawn,
  \(\DConf_2(K_5)\) is a surface without boundary.

  \begin{figure}[h!]
    \centering
    \begin{minipage}[b]{.45\textwidth}
      \centering
      \begin{tikzpicture}
        \foreach \i in {1,...,5} {
          \node (n\i) at ({(\i - 1)*72}:1.5) [circle, draw] {\(\i\)};
        }
        \foreach \i in {1,...,5} {
          \foreach \j in {\i,...,5} {
            \unless\ifnum\i=\j
              \draw (n\i) -- (n\j);
            \fi
          }
        }
      \end{tikzpicture}
      \vspace{.8cm}
      \caption{\(K_5\)}
      \label{fig:cor:manifold-classification:3:K5}
    \end{minipage}
    \begin{minipage}[b]{.45\textwidth}
      \centering
    
      \begin{tikzpicture}[
        mynode/.style={circle, draw, fill=gray!20, font=\scriptsize, minimum size=22pt, inner sep=0pt },
        grayfill/.style={gray!20,draw=black}
      ]
        \coordinate (c12) at (0, 0);
        \coordinate (c13) at (0:1.3);
        \coordinate (c43) at (30:2.3);
        \coordinate (c42) at (60:1.3);

        \filldraw[grayfill] (c12) -- (c13) -- (c43) -- (c42) -- cycle;

        \coordinate (c45) at (90:2.3);
        \coordinate (c15) at (120:1.3);

        \filldraw[grayfill] (c12) -- (c42) -- (c45) -- (c15) -- cycle;

        \coordinate (c35) at (150:2.3);
        \coordinate (c32) at (180:1.3);

        \filldraw[grayfill] (c12) -- (c32) -- (c35) -- (c15) -- cycle;

        \coordinate (c34) at (210:2.3);
        \coordinate (c14) at (240:1.3);

        \filldraw[grayfill] (c12) -- (c14) -- (c34) -- (c32) -- cycle;

        \coordinate (c54) at (270:2.3);
        \coordinate (c52) at (300:1.3);

        \filldraw[grayfill] (c12) -- (c14) -- (c54) -- (c52) -- cycle;

        \coordinate (c53) at (330:2.3);

        \filldraw[grayfill] (c12) -- (c52) -- (c53) -- (c13) -- cycle;

        \node (n12) at (c12) [mynode] {\(1,2\)};
        \node (n13) at (c13) [mynode] {\(1,3\)};
        \node (n43) at (c43) [mynode] {\(4,3\)};
        \node (n42) at (c42) [mynode] {\(4,2\)};

        \node (n45) at (c45) [mynode] {\(4,5\)};
        \node (n15) at (c15) [mynode] {\(1,5\)};

        \node (n35) at (c35) [mynode] {\(3,5\)};
        \node (n32) at (c32) [mynode] {\(3,2\)};

        \node (n34) at (c34) [mynode] {\(3,4\)};
        \node (n14) at (c14) [mynode] {\(1,4\)};

        \node (n54) at (c54) [mynode] {\(5,4\)};
        \node (n52) at (c52) [mynode] {\(5,2\)};

        \node (n53) at (c53) [mynode] {\(5,3\)};
      \end{tikzpicture}
      \caption{Neighborhood of \((1,2)\)}
      \label{fig:cor:manifold-classification:3:nbhd}
    \end{minipage}
  \end{figure}

  \textbf{Case 5:}
  Similar to Case 4, we label the vertices of \(K_{3,3}\) in Figure \ref{fig:cor:manifold-classification:4:K_3,3} and draw each 
  neighborhood type in Figure \ref{fig:cor:manifold-classification:4:nbhd}.

  \begin{figure}[h!]
    \centering
  \begin{tikzpicture}[
    mynode/.style={circle, draw}
  ]
  \node (n1) at (0,0) [mynode] {\(1\)};
  \node (n2) at (2,0) [mynode] {\(2\)};
  \node (n3) at (4,0) [mynode] {\(3\)};
  \node (n4) at (0,2) [mynode] {\(4\)};
  \node (n5) at (2,2) [mynode] {\(5\)};
  \node (n6) at (4,2) [mynode] {\(6\)};

  \draw (n1) -- (n4);
  \draw (n1) -- (n5);
  \draw (n1) -- (n6);

  \draw (n2) -- (n4);
  \draw (n2) -- (n5);
  \draw (n2) -- (n6);

  \draw (n3) -- (n4);
  \draw (n3) -- (n5);
  \draw (n3) -- (n6);
  \end{tikzpicture}

  \caption{\(K_{3,3}\)}
  \label{fig:cor:manifold-classification:4:K_3,3}
  \end{figure}

  \begin{figure}[h!]
    \centering
    \begin{minipage}[b]{.45\textwidth}
      \centering
      \begin{tikzpicture}[
        mynode/.style={circle, draw, fill=gray!20, font=\scriptsize, minimum size=22pt, inner sep=0pt },
        grayfill/.style={gray!20,draw=black}
      ]
        \coordinate (c12) at (0, 0);
    
        % --- Square 1 (Top-Right-ish) ---
        % 1 moves to 4 (at 0 deg), 2 moves to 5 (at 60 deg)
        \coordinate (c42) at (0:1.5);
        \coordinate (c45) at (30:2.5);
        \coordinate (c15) at (60:1.5);
    
        \filldraw[grayfill] (c12) -- (c42) -- (c45) -- (c15) -- cycle;
    
        % --- Square 2 (Top) ---
        % 2 moves to 5 (at 60 deg), 1 moves to 6 (at 120 deg)
        \coordinate (c65) at (90:2.5);
        \coordinate (c62) at (120:1.5);
    
        \filldraw[grayfill] (c12) -- (c15) -- (c65) -- (c62) -- cycle;
    
        % --- Square 3 (Top-Left-ish) ---
        % 1 moves to 6 (at 120 deg), 2 moves to 4 (at 180 deg)
        \coordinate (c64) at (150:2.5);
        \coordinate (c14) at (180:1.5);
    
        \filldraw[grayfill] (c12) -- (c62) -- (c64) -- (c14) -- cycle;
    
        % --- Square 4 (Bottom-Left-ish) ---
        % 2 moves to 4 (at 180 deg), 1 moves to 5 (at 240 deg)
        \coordinate (c54) at (210:2.5);
        \coordinate (c52) at (240:1.5);
    
        \filldraw[grayfill] (c12) -- (c14) -- (c54) -- (c52) -- cycle;
    
        % --- Square 5 (Bottom) ---
        % 1 moves to 5 (at 240 deg), 2 moves to 6 (at 300 deg)
        \coordinate (c56) at (270:2.5);
        \coordinate (c16) at (300:1.5);
    
        \filldraw[grayfill] (c12) -- (c52) -- (c56) -- (c16) -- cycle;
    
        % --- Square 6 (Bottom-Right-ish) ---
        % 2 moves to 6 (at 300 deg), 1 moves to 4 (at 0/360 deg)
        \coordinate (c46) at (330:2.5);
    
        \filldraw[grayfill] (c12) -- (c16) -- (c46) -- (c42) -- cycle;
    
        % --- Nodes ---
        \node (n12) at (c12) [mynode] {\(1,2\)};
    
        % Inner ring nodes
        \node (n42) at (c42) [mynode] {\(4,2\)};
        \node (n15) at (c15) [mynode] {\(1,5\)};
        \node (n62) at (c62) [mynode] {\(6,2\)};
        \node (n14) at (c14) [mynode] {\(1,4\)};
        \node (n52) at (c52) [mynode] {\(5,2\)};
        \node (n16) at (c16) [mynode] {\(1,6\)};
    
        % Outer ring nodes
        \node (n45) at (c45) [mynode] {\(4,5\)};
        \node (n65) at (c65) [mynode] {\(6,5\)};
        \node (n64) at (c64) [mynode] {\(6,4\)};
        \node (n54) at (c54) [mynode] {\(5,4\)};
        \node (n56) at (c56) [mynode] {\(5,6\)};
        \node (n46) at (c46) [mynode] {\(4,6\)};
      \end{tikzpicture}
    \end{minipage}
    \begin{minipage}[b]{.45\textwidth}
      \centering
      \begin{tikzpicture}[
        mynode/.style={circle, draw, fill=gray!20, font=\scriptsize, minimum size=22pt, inner sep=0pt },
        grayfill/.style={gray!20,draw=black}
      ]
        % Center Node (1,4)
        \coordinate (c14) at (0, 0);

        % --- Neighbors (Distance 1) ---
        % Right: Particle 2 moves 4 -> 3
        \coordinate (c13) at (1.5,0);
        % Top: Particle 1 moves 1 -> 5
        \coordinate (c54) at (0,1.5);
        % Left: Particle 2 moves 4 -> 2
        \coordinate (c12) at (-1.5,0);
        % Bottom: Particle 1 moves 1 -> 6
        \coordinate (c64) at (0,-1.5);

        % --- Corners (Distance 2 - The Squares) ---
        % Top-Right: P1 -> 5, P2 -> 3
        \coordinate (c53) at (1.5,1.5);
        % Top-Left: P1 -> 5, P2 -> 2
        \coordinate (c52) at (-1.5,1.5);
        % Bottom-Left: P1 -> 6, P2 -> 2
        \coordinate (c62) at (-1.5, -1.5);
        % Bottom-Right: P1 -> 6, P2 -> 3
        \coordinate (c63) at (1.5, -1.5);

        % --- Draw the 4 Squares ---
        % Square 1 (Top-Right)
        \filldraw[grayfill] (c14) -- (c13) -- (c53) -- (c54) -- cycle;
        % Square 2 (Top-Left)
        \filldraw[grayfill] (c14) -- (c54) -- (c52) -- (c12) -- cycle;
        % Square 3 (Bottom-Left)
        \filldraw[grayfill] (c14) -- (c12) -- (c62) -- (c64) -- cycle;
        % Square 4 (Bottom-Right)
        \filldraw[grayfill] (c14) -- (c64) -- (c63) -- (c13) -- cycle;

        % --- Place Nodes ---
        % Center
        \node (n14) at (c14) [mynode] {\(1,4\)};

        % Neighbors
        \node (n13) at (c13) [mynode] {\(1,3\)};
        \node (n54) at (c54) [mynode] {\(5,4\)};
        \node (n12) at (c12) [mynode] {\(1,2\)};
        \node (n64) at (c64) [mynode] {\(6,4\)};

        % Corners
        \node (n53) at (c53) [mynode] {\(5,3\)};
        \node (n52) at (c52) [mynode] {\(5,2\)};
        \node (n62) at (c62) [mynode] {\(6,2\)};
        \node (n63) at (c63) [mynode] {\(6,3\)};
      \end{tikzpicture}
      \vspace{.9cm}
    \end{minipage}
    \caption{Neighborhoods of \((1,2)\) and \((1,4)\) in \(\DConf_2(K_{3,3})\)}
    \label{fig:cor:manifold-classification:4:nbhd}
  \end{figure}

\textbf{Case 6:}
  The same argument as in previous cases applies here. See Figure \ref{fig:cor:manifold-classification:5:K_2,4}
  for how we label the vertices in \(K_{2,4}\) and Figures \ref{fig:cor:manifold-classification:5:nbhd1}
  and \ref{fig:cor:manifold-classification:5:nbhd2} for all neighborhood types (1, 0, and 2 particles placed in the 2-part set of \(K_{2,4}\).
  \begin{figure}[h!]
    \centering
    \begin{minipage}[b]{.45\textwidth}
      \centering
      \begin{tikzpicture}[
        mynode/.style={circle, draw}
      ]
        % Define the two main suspension vertices, positioned closer together
        \node (a1) at (0, 2) [mynode] {\(1\)};
        \node (a2) at (0, -2) [mynode] {\(6\)};

        % Define the intermediate vertices for the 'm' paths
        \node (b1) at (-2, 0) [mynode] {\(2\)};
        \node (bm) at (2, 0) [mynode] {\(5\)};

        % Draw the edges for the 'm' paths, passing through the b_i nodes
        \draw (a1) to[out=180, in=90] (b1);
        \draw (b1) to[out=270, in=180] (a2);

        \draw (a1) to[out=0, in=90] (bm);
        \draw (bm) to[out=270, in=0] (a2);

        \node (b2) at (-.667, 0) [mynode]{\(3\)};
        \node (b3) at (.667, 0) [mynode]{\(4\)};

        \draw (a1) to[out=225, in=90] (b2);
        \draw (a1) to[out=-45, in=90] (b3);

        \draw (a2) to[out=135, in=270] (b2);
        \draw (a2) to[out=45, in=270] (b3);
       \end{tikzpicture}
    \caption{\(K_{2,4}\) also known as \ \(\Theta_4\)}
    \label{fig:cor:manifold-classification:5:K_2,4}
  \end{minipage}
  \begin{minipage}[b]{.45\textwidth}
    \centering
    \begin{tikzpicture}[
      mynode/.style={circle, draw, fill=gray!20, font=\scriptsize, minimum size=26pt, inner sep=0pt },
      grayfill/.style={gray!20,draw=black}
    ]
      % Center Node (1,2,3)
      \coordinate (c123) at (0, 0);

      % Right: Particle 3 moves 3 -> 6
      \coordinate (c126) at (0:1.5);
      % Top: Particle 1 moves 1 -> 4
      \coordinate (c423) at (90:1.5);
      % Left: Particle 2 moves 2 -> 6
      \coordinate (c163) at (180:1.5);
      % Bottom: Particle 1 moves 1 -> 5
      \coordinate (c523) at (270:1.5);

      % use distance 2.121 = \sqrt{2} * 1.5
      % Top-Right: P1 -> 4, P3 -> 6
      \coordinate (c426) at (45:2.121);
      % Top-Left: P1 -> 4, P2 -> 6
      \coordinate (c463) at (135:2.121);
      % Bottom-Left: P1 -> 5, P2 -> 6
      \coordinate (c563) at (225:2.121);
      % Bottom-Right: P1 -> 5, P3 -> 6
      \coordinate (c526) at (315:2.121);

      % --- Draw the 4 Squares ---
      % Square 1 (Top-Right)
      \filldraw[grayfill] (c123) -- (c126) -- (c426) -- (c423) -- cycle;
      % Square 2 (Top-Left)
      \filldraw[grayfill] (c123) -- (c423) -- (c463) -- (c163) -- cycle;
      % Square 3 (Bottom-Left)
      \filldraw[grayfill] (c123) -- (c163) -- (c563) -- (c523) -- cycle;
      % Square 4 (Bottom-Right)
      \filldraw[grayfill] (c123) -- (c523) -- (c526) -- (c126) -- cycle;

      % --- Place Nodes ---
      % Center
      \node (n123) at (c123) [mynode] {\(1,2,3\)};

      % Neighbors
      \node (n126) at (c126) [mynode] {\(1,2,6\)};
      \node (n423) at (c423) [mynode] {\(4,2,3\)};
      \node (n163) at (c163) [mynode] {\(1,6,3\)};
      \node (n523) at (c523) [mynode] {\(5,2,3\)};

      % Corners
      \node (n426) at (c426) [mynode] {\(4,2,6\)};
      \node (n463) at (c463) [mynode] {\(4,6,3\)};
      \node (n563) at (c563) [mynode] {\(5,6,3\)};
      \node (n526) at (c526) [mynode] {\(5,2,6\)};
    \end{tikzpicture}
    \caption{Neighborhood of \((1,2,3)\)}
    \label{fig:cor:manifold-classification:5:nbhd1}
  \end{minipage}
\end{figure}

\begin{figure}[h!]
  \centering

  \begin{minipage}[b]{.45\textwidth}
    \begin{tikzpicture}[
      mynode/.style={circle, draw, fill=gray!20, font=\scriptsize, minimum size=26pt, inner sep=0pt },
      grayfill/.style={gray!20,draw=black}
    ]
      % Center Node (2,3,4)
      \coordinate (c234) at (0, 0);

      % --- Neighbors (Distance 1) ---
      % 0 deg: P1 -> 1
      \coordinate (c134) at (0:1.5);
      % 60 deg: P2 -> 6
      \coordinate (c264) at (60:1.5);
      % 120 deg: P3 -> 1
      \coordinate (c231) at (120:1.5);
      % 180 deg: P1 -> 6
      \coordinate (c634) at (180:1.5);
      % 240 deg: P2 -> 1
      \coordinate (c214) at (240:1.5);
      % 300 deg: P3 -> 6
      \coordinate (c236) at (300:1.5);

      % --- Corners (Distance 2 - The Squares) ---
      % 30 deg: P1->1, P2->6
      \coordinate (c164) at (30:2.5);
      % 90 deg: P2->6, P3->1
      \coordinate (c261) at (90:2.5);
      % 150 deg: P3->1, P1->6
      \coordinate (c631) at (150:2.5);
      % 210 deg: P1->6, P2->1
      \coordinate (c614) at (210:2.5);
      % 270 deg: P2->1, P3->6
      \coordinate (c216) at (270:2.5);
      % 330 deg: P3->6, P1->1
      \coordinate (c136) at (330:2.5);

      % --- Draw the 6 Squares ---
    
      % Square 1 (Right-ish)
      \filldraw[grayfill] (c234) -- (c134) -- (c164) -- (c264) -- cycle;
    
      % Square 2 (Top-Right-ish)
      \filldraw[grayfill] (c234) -- (c264) -- (c261) -- (c231) -- cycle;
    
      % Square 3 (Top-Left-ish)
      \filldraw[grayfill] (c234) -- (c231) -- (c631) -- (c634) -- cycle;
    
      % Square 4 (Left-ish)
      \filldraw[grayfill] (c234) -- (c634) -- (c614) -- (c214) -- cycle;
    
      % Square 5 (Bottom-Left-ish)
      \filldraw[grayfill] (c234) -- (c214) -- (c216) -- (c236) -- cycle;
    
      % Square 6 (Bottom-Right-ish)
      \filldraw[grayfill] (c234) -- (c236) -- (c136) -- (c134) -- cycle;

      % --- Place Nodes ---
      % Center
      \node (n234) at (c234) [mynode] {\(2,3,4\)};

      % Inner ring nodes (Neighbors)
      \node (n134) at (c134) [mynode] {\(1,3,4\)};
      \node (n264) at (c264) [mynode] {\(2,6,4\)};
      \node (n231) at (c231) [mynode] {\(2,3,1\)};
      \node (n634) at (c634) [mynode] {\(6,3,4\)};
      \node (n214) at (c214) [mynode] {\(2,1,4\)};
      \node (n236) at (c236) [mynode] {\(2,3,6\)};

      % Outer ring nodes (Corners)
      \node (n164) at (c164) [mynode] {\(1,6,4\)};
      \node (n261) at (c261) [mynode] {\(2,6,1\)};
      \node (n631) at (c631) [mynode] {\(6,3,1\)};
      \node (n614) at (c614) [mynode] {\(6,1,4\)};
      \node (n216) at (c216) [mynode] {\(2,1,6\)};
      \node (n136) at (c136) [mynode] {\(1,3,6\)};
    \end{tikzpicture}
  \end{minipage}
  \begin{minipage}[b]{.45\textwidth}
    \begin{tikzpicture}[
      mynode/.style={circle, draw, fill=gray!20, font=\scriptsize, minimum size=26pt, inner sep=0pt },
      grayfill/.style={gray!20,draw=black}
    ]
    % Center Node (1,2,6)
    \coordinate (c126) at (0, 0);

    % --- Neighbors (Distance 1) ---
    % 0 deg: P1 moves 1 -> 3
    \coordinate (c326) at (0:1.5);
    % 60 deg: P3 moves 6 -> 4
    \coordinate (c124) at (60:1.5);
    % 120 deg: P1 moves 1 -> 5
    \coordinate (c526) at (120:1.5);
    % 180 deg: P3 moves 6 -> 3
    \coordinate (c123) at (180:1.5);
    % 240 deg: P1 moves 1 -> 4
    \coordinate (c426) at (240:1.5);
    % 300 deg: P3 moves 6 -> 5
    \coordinate (c125) at (300:1.5);

    % --- Corners (Distance 2 - The Squares) ---
    % 30 deg: P1->3, P3->4
    \coordinate (c324) at (30:2.5);
    % 90 deg: P1->5, P3->4
    \coordinate (c524) at (90:2.5);
    % 150 deg: P1->5, P3->3
    \coordinate (c523) at (150:2.5);
    % 210 deg: P1->4, P3->3
    \coordinate (c423) at (210:2.5);
    % 270 deg: P1->4, P3->5
    \coordinate (c425) at (270:2.5);
    % 330 deg: P1->3, P3->5
    \coordinate (c325) at (330:2.5);

    % --- Draw the 6 Squares ---
    
    % Square 1 (P1->3, P3->4)
    \filldraw[grayfill] (c126) -- (c326) -- (c324) -- (c124) -- cycle;
    
    % Square 2 (P1->5, P3->4)
    \filldraw[grayfill] (c126) -- (c124) -- (c524) -- (c526) -- cycle;
    
    % Square 3 (P1->5, P3->3)
    \filldraw[grayfill] (c126) -- (c526) -- (c523) -- (c123) -- cycle;
    
    % Square 4 (P1->4, P3->3)
    \filldraw[grayfill] (c126) -- (c123) -- (c423) -- (c426) -- cycle;
    
    % Square 5 (P1->4, P3->5)
    \filldraw[grayfill] (c126) -- (c426) -- (c425) -- (c125) -- cycle;
    
    % Square 6 (P1->3, P3->5)
    \filldraw[grayfill] (c126) -- (c125) -- (c325) -- (c326) -- cycle;

    % --- Place Nodes ---
    % Center
    \node (n126) at (c126) [mynode] {\(1,2,6\)};

    % Inner ring nodes (Neighbors)
    \node (n326) at (c326) [mynode] {\(3,2,6\)};
    \node (n124) at (c124) [mynode] {\(1,2,4\)};
    \node (n526) at (c526) [mynode] {\(5,2,6\)};
    \node (n123) at (c123) [mynode] {\(1,2,3\)};
    \node (n426) at (c426) [mynode] {\(4,2,6\)};
    \node (n125) at (c125) [mynode] {\(1,2,5\)};

    % Outer ring nodes (Corners)
    \node (n324) at (c324) [mynode] {\(3,2,4\)};
    \node (n524) at (c524) [mynode] {\(5,2,4\)};
    \node (n523) at (c523) [mynode] {\(5,2,3\)};
    \node (n423) at (c423) [mynode] {\(4,2,3\)};
    \node (n425) at (c425) [mynode] {\(4,2,5\)};
    \node (n325) at (c325) [mynode] {\(3,2,5\)};

    \end{tikzpicture}
  \end{minipage}
  \caption{Neighborhoods of \((2,3,4)\) and \((1,2,6\))}
  \label{fig:cor:manifold-classification:5:nbhd2}
\end{figure}

\textbf{Case 7:}
  See Figure \ref{fig:cor:manifold-classification:6:graph}
  The neighborhood types in this case are \(2\) particles placed on the \(K_5\)-component and one particle
  placed on the \(K_1\)-component (exactly the same as Figure \ref{fig:cor:manifold-classification:3:nbhd})
  and \(3\) particles placed on the \(K_5\)-component (see Figure \ref{fig:cor:manifold-classification:6:nbhd}).

  \begin{figure}[h!]
    \centering
    \begin{minipage}[b]{.45\textwidth}
      \centering
      \begin{tikzpicture}
        \foreach \i in {1,...,5} {
          \node (n\i) at ({(\i - 1)*72}:1.5) [circle, draw] {\(\i\)};
        }
        \foreach \i in {1,...,5} {
          \foreach \j in {\i,...,5} {
            \unless\ifnum\i=\j
              \draw (n\i) -- (n\j);
            \fi
          }
        }
        \node (n6) at (36:2.5) [circle, draw] {\(6\)};
      \end{tikzpicture}
      \caption{\(K_5 \cup K_1\)}
      \label{fig:cor:manifold-classification:6:graph}
      \vspace{1cm}
    \end{minipage}
    \begin{minipage}[b]{.45\textwidth}
      \centering
      \begin{tikzpicture}[
        mynode/.style={circle, draw, fill=gray!20, font=\scriptsize, minimum size=26pt, inner sep=0pt },
        grayfill/.style={gray!20,draw=black}
      ]
        % Center Node (1,2,3)
        \coordinate (c123) at (0, 0);

        % --- Neighbors (Distance 1 - Single Moves) ---
        % 0 deg: P1 moves 1 -> 4
        \coordinate (c423) at (0:1.5);
        % 60 deg: P2 moves 2 -> 5
        \coordinate (c153) at (60:1.5);
        % 120 deg: P3 moves 3 -> 4
        \coordinate (c124) at (120:1.5);
        % 180 deg: P1 moves 1 -> 5
        \coordinate (c523) at (180:1.5);
        % 240 deg: P2 moves 2 -> 4
        \coordinate (c143) at (240:1.5);
        % 300 deg: P3 moves 3 -> 5
        \coordinate (c125) at (300:1.5);

        % --- Corners (Distance 2 - The Squares/Simultaneous Moves) ---
        % 30 deg: P1->4, P2->5
        \coordinate (c453) at (30:2.5);
        % 90 deg: P2->5, P3->4
        \coordinate (c154) at (90:2.5);
        % 150 deg: P3->4, P1->5
        \coordinate (c524) at (150:2.5);
        % 210 deg: P1->5, P2->4
        \coordinate (c543) at (210:2.5);
        % 270 deg: P2->4, P3->5
        \coordinate (c145) at (270:2.5);
        % 330 deg: P3->5, P1->4
        \coordinate (c425) at (330:2.5);

        % --- Draw the 6 Squares ---
    
        % Square 1 (P1->4, P2->5)
        \filldraw[grayfill] (c123) -- (c423) -- (c453) -- (c153) -- cycle;
    
        % Square 2 (P2->5, P3->4)
        \filldraw[grayfill] (c123) -- (c153) -- (c154) -- (c124) -- cycle;
    
        % Square 3 (P3->4, P1->5)
        \filldraw[grayfill] (c123) -- (c124) -- (c524) -- (c523) -- cycle;
    
        % Square 4 (P1->5, P2->4)
        \filldraw[grayfill] (c123) -- (c523) -- (c543) -- (c143) -- cycle;
    
        % Square 5 (P2->4, P3->5)
        \filldraw[grayfill] (c123) -- (c143) -- (c145) -- (c125) -- cycle;
    
        % Square 6 (P3->5, P1->4)
        \filldraw[grayfill] (c123) -- (c125) -- (c425) -- (c423) -- cycle;

        % --- Place Nodes ---
        % Center
        \node (n123) at (c123) [mynode] {\(1,2,3\)};

        % Inner ring nodes (Neighbors)
        \node (n423) at (c423) [mynode] {\(4,2,3\)};
        \node (n153) at (c153) [mynode] {\(1,5,3\)};
        \node (n124) at (c124) [mynode] {\(1,2,4\)};
        \node (n523) at (c523) [mynode] {\(5,2,3\)};
        \node (n143) at (c143) [mynode] {\(1,4,3\)};
        \node (n125) at (c125) [mynode] {\(1,2,5\)};

        % Outer ring nodes (Corners)
        \node (n453) at (c453) [mynode] {\(4,5,3\)};
        \node (n154) at (c154) [mynode] {\(1,5,4\)};
        \node (n524) at (c524) [mynode] {\(5,2,4\)};
        \node (n543) at (c543) [mynode] {\(5,4,3\)};
        \node (n145) at (c145) [mynode] {\(1,4,5\)};
        \node (n425) at (c425) [mynode] {\(4,2,5\)};
      \end{tikzpicture}
      \caption{Neighborhood of \((1,2,3)\)}
      \label{fig:cor:manifold-classification:6:nbhd}
    \end{minipage}
  \end{figure}

  Note that \(m = 1\) or \(m = 2\) in each case.
  Using the table in Figure \ref{fig:euler_characteristics} we can classify 
  each connected surface that appears in this theorem.
  Although the space is disconnected in case 7,
  notice that one component is homeomorphic to \(\DConf_3(K_5)\)
  and the other component is homeomorphic to \(\DConf_2(K_5)\).
\end{proof}
    \section{Some Special Subgraphs} \label{section:special-subgraphs}
In Theorem \ref{thm:m-pseudomanifolds}, we used the fact that we can often determine \(\Gamma\) if for any \((m-1)\)-matching we always know what \(\Gamma - V(M)\) is.
In this section, we provide proofs of these facts.
From this point forward in this section suppose \(m \ge 1\) and that \(\Gamma\) contains an \(m\)-matching.
Note that we are using \(m\)-matchings instead of \((m-1)\)-matchings as in section \ref{section:pseudomanifolds} to make our arguments more concise.

%
% ----- Input subsections -----
%

\subsection{The Complete Graph}
\label{subsection:K_r-is-special}

\begin{proof}[Proof of Lemma \ref{lem:K_r-is-special}]
    Let \(r \ge 3\) and suppose that \(\Gamma - V(M) \cong K_r\) for any \(m\)-matching \(M\). We will show that \(\Gamma \cong K_{2m + r}\).
    Notice that for any \(m\)-matching \(M\), \(\abs{V(\Gamma)} = \abs{V(M)} + \abs{V(K_r)} = 2m + r\).
    Now, let \(v_1\) and \(v_2\) be two distinct vertices in \(\Gamma\) and let \(M_0\) be some \(m\)-matching in \(\Gamma\).
    We will show that \(v_1\) and \(v_2\) are adjacent in \(\Gamma\). 

    If \(v_1\) and \(v_2\) belong to \(\Gamma - V(M_0)\), then they are adjacent since they belong to a complete graph.
    So, suppose without loss of generality that \(v_1\) belongs to \(V(M_0)\) and let \(w_1\) be the vertex adjacent to \(v_1\) in \(\Gamma(M_0)\).
    We proceed by cases on whether \(v_2\) belongs to \(V(M_0)\).

    \textbf{Case 1:} \(v_2\) does not belong to \(V(M_0)\).

    Since \(v_2\) does not belong to \(V(M_0)\) and \(r \ge 3\), there are at least two adjacent vertices \(u_1\) and \(u_2\) other than \(v_2\) in \(\Gamma - V(M_0)\).
    Let \(M_1 = (M_0 \cup \{u_1 u_2\}) \setminus \{v_1 w_1\}\).
    Since \(M_1\) is an \(m\)-matching, it follows that \(v_1\) and \(v_2\) are adjacent in the complete graph \(\Gamma - V(M_1)\).

    \textbf{Case 2:} \(v_2\) belongs to \(V(M_0)\).
    
    Let \(w_2\) be the vertex adjacent to \(v_2\) in \(\Gamma(M_0)\), and let
    \(u_1\) and \(u_2\) be two adjacent vertices in \(\Gamma - V(M_0)\).
    Define \(M_1 = (M_0 \cup \{u_1 u_2\}) \setminus \{v_2 w_2\}\).
    Then, \(M_1\) is an \(m\)-matching containing \(v_1\) but not \(v_2\) putting us in Case 1.
\end{proof}
\subsection{The Bipartite Graph}
\label{subsection:K_p,q-is-special}
\begin{proof}[Proof of Lemma \ref{lem:K_p,q-is-special}]
    Let \(p \ge q \ge 1\) such that \(p + q \ge 4\) and
    suppose that \(\Gamma - V(M) \cong K_{p,q}\) for any \(m\)-matching \(M\).
    We will show that \(\Gamma \cong K_{p + m, q + m}\).
    
    Notice that \(\abs{V(\Gamma)} = \abs{V(M)} + \abs{V(K_{p,q})} = 2m + p + q\).
    Also, \(p + p \ge p + q \ge 4\), meaning \(p \ge 2\).
    Let \(M_0\) be an \(m\)-matching, \(\{a_{m+i}\}_{i=1}^p\) be the vertices in the partite set of size \(p\) in \(\Gamma - V(M_0)\),
    and \(\{b_{m+j}\}_{j=1}^q\) be the vertices in the partite set of size \(q\) in \(\Gamma - V(M_0)\).
    %If \(p = q\), then ensure the vertices are labeled so that each \(a_{m+i}\) is adjacent to every \(b_{m+j}\). 

    Denote the \(m\) disjoint edges in \(M_0\) as \(v_1 w_1, v_2 w_2, \cdots, v_m w_m\).
    We define a new \(m\)-matching for each \(1 \le i \le m\) as follows.
    \[
         M_i = (M_0 \cup \{a_{m+1}b_{m+1}\}) \setminus \{ v_i w_i \}
    \]
    Since \(v_i\) and \(w_i\) are adjacent and \(\Gamma - V(M_i) \cong K_{p,q}\),
    one of these two vertices belongs to the partite set of size \(p\), and the other belongs to the partite set of size \(q\) in \(\Gamma - V(M_i)\).
    Let \(a_i\) be the vertex in the partite set of size \(p\) and \(b_i\) be the vertex in the partite set of size \(q\).
    % The critical property is the one that we force in the p = q case.
    Notice that if \(p \neq q\) then each \(a_i\) is adjacent to every \(b_{m+k}\) for \(2 \le k \le q\)
    and \(b_i\) is adjacent to every \(a_{m+k}\) for \(2 \le k \le p\). If \(p = q\), then choose each \(a_i\) and \(b_i\) so that these properties hold.
    Let \(A = \{a_i\}_{i=1}^{m+p}\) and \(B = \{b_j\}_{j=1}^{m+q}\).
    We claim that \(A\) and \(B\) form the partite sets of a complete bipartite graph. If this claim is true, then since \(A\) contains \(m+p\) vertices and \(B\) contains \(m + q\) vertices, it will follow that \(\Gamma \cong K_{p+m, q+m}\).
    Figure \ref{fig:lem:K_p,q-is-special:1} illustrates which vertices in \(\Gamma\) are known to be adjacent. The remaining argument consists of 3 steps.
    \begin{figure}[h!]
        \centering
        \begin{tikzpicture}[
              mynode/.style={circle, draw, minimum size=31pt, inner sep=0pt}
        ]

            \node [mynode] (a1) at (0,3) {\(a_1\)};
            \node (a1ai) at (1,3) {\(\dots\)};
            \node [mynode] (ai) at (2,3) {\(a_i\)};
            \node (aiaj) at (3,3) {\(\cdots\)};
            \node [mynode] (aj) at (4,3) {\(a_j\)};
            \node (ajamp1) at (5,3) {\(\cdots\)};
            \node [mynode] (amp1) at (6,3) {\(a_{m+1}\)};
            \node (amp1ampp) at (7,3) {\(\cdots\)};
            \node [mynode] (ampp) at (8,3) {\(a_{m+p}\)};

            \node [mynode] (b1) at (0,0) {\(b_1\)};
            \node (b1bj) at (1,0) {\(\dots\)};
            \node [mynode] (bi) at (2,0) {\(b_i\)};
            \node (bij) at (3,0) {\(\cdots\)};
            \node [mynode] (bj) at (4,0) {\(b_j\)};
            \node (bjbmq1) at (5,0) {\(\cdots\)};
            \node [mynode] (bmq1) at (6,0) {\(b_{m+1}\)};
            \node (bmq1bmq) at (7,0) {\(\cdots\)};

            \draw (b1) -- (a1);
            \draw (ai) -- (bi);
            \draw (aj) -- (bj);
            \draw (amp1) -- (bmq1);

            \draw (ampp) -- (b1);
            \draw (ampp) -- (bi);
            \draw (ampp) -- (bj);
            \draw (ampp) -- (bmq1);

        \end{tikzpicture}
        \caption{}
        \label{fig:lem:K_p,q-is-special:1}
    \end{figure}

    \textbf{Step 1:} No two vertices in \(A\) are adjacent in \(\Gamma\). 
    To see this, suppose for the sake of contradiction that \(a_i\) and \(a_j\) are adjacent in \(\Gamma\) for some \(i \neq j\).
    If both \(i \ge m + 1\) and \(j \ge m + 1\), then the vertices in \(\{a_i, a_j, b_{m+1}\}\) form a triangle in \(\Gamma - V(M_0)\) contradicting that \(\Gamma - V(M_0)\) is bipartite.
    So, assume \(i \le m\).
    If \(j > m + 1\), then the vertices in \(\{a_i, b_i, a_j\}\) form a triangle in \(\Gamma - V(M_i)\).
    So, also assume that \(j \le m + 1\).
    
    %If \(j = m+1\), then we break into two cases: \(q = 1\) and \(q > 1\). Assuming \(j = m+1\) and \(q = 1\), then let \(M_0' = (M_0 \setminus \{a_i b_i\}) \cup \{a_i a_{m+1}\}\). Notice \(b_i\) and \(b_{m+1}\) are adjacent to each \(a_{m+k}\) for \(2 \le k \le p\) in \(\Gamma - V(M_0')\), meaning that \(b_i\) and \(b_{m+1}\) belong to one partite set of \(\Gamma - V(M_0')\). However, \(q = 1\).
    %Assuming instead that \(j = m+1\) and \(q > 1\), notice the vertices in \(\{a_i, a_{m+1}, b_{m+2}\}\) form a triangle in \(\Gamma - V( (M_0 \setminus \{a_i b_i\}) \cup \{a_{m+2} b_{m+1}\})\).
    
    We define a new \(m\)-matching as follows.
    \[
        M'= (M_0 \cup \{a_{m+1} b_{m+1}, a_{m+p}b_i\}) \setminus \{a_i b_i, a_j b_j\}
    \]
    Notice that if \(a_i\) and \(b_j\) are adjacent in \(\Gamma - V(M')\), 
    then \(\{a_i, a_j, b_j\}\) forms a triangle in \(\Gamma - V(M')\), contradicting that \(\Gamma - V(M')\) is bipartite. So, \(a_i\) and \(b_j\) cannot be adjacent in \(\Gamma - V(M')\), meaning
    both vertices belong to the same partite set in \(\Gamma - V(M')\).
    We consider two cases.

    \textbf{Case 1:} \(q = 1\).
    Since \(p \ge 3\) in this case, the vertices \(a_{m+2}\) and \(a_{m+3}\) exist. Notice if \(j \le m\) both \(a_{m+2}\) and \(a_{m+3}\) are adjacent to \(b_j\) in \(\Gamma - V(M_j)\), and if \(j = m+1\), then \(a_{m+2}\) and \(a_{m+3}\) are adjacent to \(b_j\) in \(\Gamma - V(M_0)\). Since \(a_{m+2}\) and \(a_{m+3}\) also belong to \(\Gamma - V(M')\), both \(a_{m+2}\) and \(a_{m+3}\) are adjacent to \(b_j\) in \(\Gamma - V(M')\). So, \(a_{m+2}\) and \(a_{m+3}\) belong to the partite set in \(\Gamma - V(M')\) not including \(b_j\).
    However, now both \(a_i\) and \(b_j\) belong to one partite set in \(\Gamma - V(M')\), and both \(a_{m+2}\) and \(a_{m+3}\) belong to the other partite set of \(\Gamma - V(M')\), contradicting that \(q = 1\).

    \textbf{Case 2:} \(q \ge 2\).
    Note that \(b_{m+2}\) exists in this case.
    Observe that \(a_i\) is adjacent to \(b_{m+2}\) in \(\Gamma - V(M_i)\).
    Also, if \(j \le m\), then \(a_j\) is adjacent to \(b_{m+2}\) in \(\Gamma - V(M_j)\), and if \(j = m+1\), then \(a_j\) is adjacent to \(b_{m+2}\) in \(\Gamma - V(M_0)\). However \(a_i\), \(a_j\), and \(b_{m+2}\) belong to \(\Gamma - V(M')\). So, our assumption that \(a_i\) is adjacent to \(a_j\) implies that the vertices \(a_i\), \(a_j\), and \(b_{m+2}\) form a triangle in \(\Gamma - V(M')\) contradicting that \(\Gamma - V(M')\) is bipartite.

    \textbf{Step 2:} Every vertex in \(A\) is adjacent to every vertex in \(B\) in \(\Gamma\).
    To see this, let \(a_i \in A\) and \(b_j \in B\). 
    Note the following.
    \begin{enumerate}
        \item If \(i = j\), then \(a_i\) and \(b_j\) are adjacent since \(a_ib_j \in M_0\).
        \item If \(i > m + 1\), then \(a_i\) and \(b_j\) are adjacent in \(\Gamma - V(M_j)\).
        \item If \(j > m + 1\), then \(a_i\) and \(b_j\) are adjacent in \(\Gamma - V(M_i)\).
    \end{enumerate}
    So, suppose \(i\) and \(j\) satisfy none of the above conditions, i.e.\ suppose \(i \neq j\), \(i \le m +1\), and \(j \le m + 1\).
    We consider the same new \(m\)-matching as in step 1.
    \[
        M'= (M_0 \cup \{a_{m+1} b_{m+1}, a_{m+p}b_i\}) \setminus \{a_i b_i, a_j b_j\}
    \]
    Suppose for the sake of contradiction that \(a_i\) and \(b_j\) are not adjacent in \(\Gamma\). Then \(a_i\) and \(b_j\) are not adjacent in \(\Gamma - V(M')\), meaning that \(a_i\) and \(b_j\) belong to the same partite set in \(\Gamma - V(M')\).
    However, since \(a_j\) is adjacent to \(b_j\), the vertex \(a_j\) must belong to the other partite set in \(\Gamma - V(M')\) that does not include \(a_i\) and \(b_j\). Hence, \(a_i\) must be adjacent to \(a_j\), contradicting that \(A\) contains no adjacent vertices.

    \textbf{Step 3:} No two vertices in \(B\) are adjacent in \(\Gamma\).
    Suppose for the sake of contradiction that \(b_i\) and \(b_j\) are adjacent in \(\Gamma\) for some \(i \neq j\).
    Note the following.
    \begin{enumerate}
        \item If \(i, j > m +1\), then \(b_i\) and \(b_j\) are adjacent in \(\Gamma - V(M_0)\).
        \item If \(i > m + 1\) and \(j \le m+1\), then \(b_i\) and \(b_j\) are adjacent in \(\Gamma - V(M_j)\).
        \item If \(i \le m + 1\) and \(j > m + 1\), then \(b_i\) and \(b_j\) are adjacent in \(\Gamma - V(M_i)\).
    \end{enumerate}
    Based on how we originally chose which vertices belong to \(B\) before step 1, none of the above conditions can be true. That is, we must have \(i, j \le m+1\).
    We consider two cases.

    \textbf{Case 1:} \(q = 1\).
    We define a new \(m\)-matching.
    \[
        M^* = (M_0 \cup \{a_{m+1} b_{m+1}, b_i b_j\}) \setminus \{a_i b_i, a_j b_j\}
    \]
    Observe that there are \(m\) disjoint edges in \(M^*\) and that every vertex in \(B\)
    is incident to some edge in \(M^*\).
    Hence, \(V(\Gamma) \setminus V(M^*)\) only contains vertices in \(A\). So, \(\Gamma - V(M^*)\) cannot be \(K_{p,q}\) otherwise we would have adjacent vertices in \(A\).

    \textbf{Case 2:} \(q \ge 2\).
    We again define a new \(m\)-matching.
    \[
        M^{**} = (M_0 \cup \{a_{m+1} b_{m+1}, a_{m+2} b_{m+2}\}) \setminus \{a_i b_i, a_j b_j\}
    \]
    Notice \(\{a_i, b_i, b_j\}\) forms a triangle in \(\Gamma - V(M^{**})\) contradicting the fact that \(\Gamma - V(M^{**}) \cong K_{p,q}\) is bipartite.
    
\end{proof}

\subsection{The Complement of the Claw}
\label{subsection:K_3-cup-K_1-is-special}

\begin{proof}[Proof of Lemma \ref{lem:K_3-cup-K_1-is-special}]
    \tikzset{
        mynode/.style={circle, draw, minimum size=29pt, inner sep=1.5pt}
    };
    Suppose \(\Gamma - V(M) \cong K_3 \sqcup K_1\) for any \(m\)-matching \(M\). We will show that \(\Gamma \cong K_p \sqcup K_q\) for some odd integers \(p\) and \(q\) such that \(p + q = 2m + 4\).
    First, for any \(m\)-matching \(M\), notice that \(\abs{V(\Gamma - V(M))} = \abs{V(\Gamma)} - \abs{V(M)} = \abs{V(\Gamma)} - 2m = 4\).
    So, \(\abs{V(\Gamma)} = 2m + 4\).
    The remainder of the proof consists of 4 steps.
    
    \textbf{Step 1:} \(\Gamma\) is not connected.

    Suppose \(\Gamma\) is connected and let \(M_0\) be an \(m\)-matching in \(\Gamma\).
    Let \(\{a, b, c\}\) be the vertices in the \(K_3\) component of \(\Gamma - V(M_0)\)
    and let \(d\) be the isolated vertex in \(\Gamma - V(M_0)\).
    Since \(\Gamma\) is connected, there exists a path from \(a\) to \(d\) in \(\Gamma\).
    Let \(a = v_0, \cdots, v_k = d\) be the vertices in a path from \(a\) to \(d\).

    \begin{figure}[h!]
        \begin{tikzpicture}
        \begin{scope}[shift={(-1,0)}]
            \node [mynode] (a) at (0:1) {\(a\)};
            \node [mynode] (b) at (120:1) {\(b\)};
            \node [mynode] (c) at (240:1) {\(c\)};
            \draw (a) -- (b);
            \draw (b) -- (c);
            \draw (a) -- (c);
        \end{scope}
        \node[mynode] (v1) at (2,0) {\(v_1\)}; 
        \draw (a) -- (v1);
        \node[mynode] (v2) at (4,0) {\(v_2\)}; 
        \draw (v2) -- (v1) node [midway, above] {\(e\)};
        \node [mynode] (d) at (6,0) {\(d\)};
        \end{tikzpicture}
        \caption{}
        \label{fig:lem:K_3-cup-k_1-is-special:1}
    \end{figure}

    We can assume that \(v_1\) is not \(b\) or \(c\)
    otherwise relabel the \(K_3\) component of \(\Gamma - V(M_0)\) so that \(v_1\) is not \(b\) or \(c\).
    Notice that the path must have at least \(3\) vertices since \(d\) is not adjacent to \(a\) in \(\Gamma - V(M_0)\).
    Also, notice that \(v_1\) must belong to \(V(M_0)\) and exactly one edge \(e\) of \(M_0\) must 
    be incident to \(v_1\) in \(\Gamma\)
    (see Figure \ref{fig:lem:K_3-cup-k_1-is-special:1}).
    We consider two cases for \(e\).

    \textbf{Case 1:} \(e = v_1 v_2\).
    Define a new \(m\)-matching \(M_1\) by swapping the edge \(bc\) for \(v_1 v_2\).
    Since the vertices \(a = v_0\), \(v_1\), and \(v_2\) are connected in \(\Gamma - V(M_1)\), they must
    belong to the \(K_3\) component in \(\Gamma - V(M_1)\), meaning that \(a = v_0\) is adjacent to \(v_2\) (see Figure \ref{fig:lem:K_3-cup-k_1-is-special:2}).
    \begin{figure}[h!]
        \centering
        \begin{tikzpicture}
        \begin{scope}[shift={(-1,0)}]
            \node [mynode] (a) at (0:1) {\(a\)};
            \node [mynode] (b) at (120:1) {\(b\)};
            \node [mynode] (c) at (240:1) {\(c\)};
            \draw (a) -- (b);
            \draw (b) -- (c);
            \draw (a) -- (c);
        \end{scope}
        \node[mynode] (v1) at (2,0) {\(v_1\)}; 
        \draw (a) -- (v1);
        \node[mynode] (v2) at (4,0) {\(v_2\)}; 
        \draw (v2) -- (v1) node [midway, above] {\(e\)};
        \node [mynode] (d) at (6,0) {\(d\)};
        \draw (a) to [out=270, in=270] (v2);
        \end{tikzpicture}
        \caption{}
        \label{fig:lem:K_3-cup-k_1-is-special:2}
    \end{figure}

    \textbf{Case 2:} \(e = v_1 w\) for some \(w \neq v_2\).
    Similarly to case 1, define a new \(m\)-matching \(M_1\) by swapping the edge \(bc\) for \(v_1 w\).
    Since the vertices \(a = v_0\), \(v_1\), and \(w\) are connected in \(\Gamma - V(M_1)\), they must belong to the \(K_3\) component in \(\Gamma - V(M_1)\),
    meaning that \(a = v_0\) is adjacent to \(w\) (see Figure \ref{fig:lem:K_3-cup-k_1-is-special:3}).
    \begin{figure}[h!]
        \centering
        \begin{tikzpicture}
        \begin{scope}[shift={(-1,0)}]
            \node [mynode] (a) at (0:1) {\(a\)};
            \node [mynode] (b) at (120:1) {\(b\)};
            \node [mynode] (c) at (240:1) {\(c\)};
            \draw (a) -- (b);
            \draw (b) -- (c);
            \draw (a) -- (c);
        \end{scope}
        \node[mynode] (v1) at (2,0) {\(v_1\)}; 
        \draw (a) -- (v1);
        \node[mynode] (v2) at (4,0) {\(v_2\)}; 
        \draw (v1) -- (v2);
        \node [mynode] (d) at (6,0) {\(d\)};
        \node [mynode] (w) at (2, -2) {\(w\)};
        \draw (a) -- (w);
        \draw (v1) -- (w) node [midway, right] {\(e\)};
        \end{tikzpicture}
        \caption{}
        \label{fig:lem:K_3-cup-k_1-is-special:3}
    \end{figure}

    In either case, we have found a new \(m\)-matching \(M_1\) such that the path between some vertex in the \(K_3\) component of \(\Gamma - V(M_1)\) and \(d\) is shorter. Since the path is finite, repeatedly swapping edges like in one of the above two cases will eventually result in a matching \(M^*\) such that \(\Gamma - V(M^*)\) is connected, contradicting that \(d\) is isolated in \(\Gamma - V(M)\) for any \(m\)-matching \(M\).

    \textbf{Step 2:} Every component of \(\Gamma\) has an odd number of vertices.

    Let \(\Gamma_1, \Gamma_2, \ldots, \Gamma_k\) be the (non-empty) connected components of \(\Gamma\).
    Suppose for the sake of contradiction that \(\Gamma_1\) has an even number of vertices and
    let \(M_0\) be an \(m\)-matching in \(\Gamma\).
    Since \(\Gamma_1\) is connected and has at least 2 vertices, it must contain at least one edge \(e\).
    We proceed by cases on the number of vertices in \(\Gamma_1 - V(M_0)\).

    \textbf{Case 1:} \(\Gamma_1 - V(M_0)\) contains \(0\) vertices.
    In this case \(e\) must belong to \(M_0\).
    Let \(M_1\) be an \(m\)-matching created by swapping the edge \(e\) for some edge in \(\Gamma - V(M_0)\).
    Observe that \(\Gamma - V(M_1)\) contains the connected component \(\Gamma_1 - V(M_1)\) which consists of exactly 2 vertices. Since \(K_3\cup K_1\) contains no such components, we have reached a contradiction.

    \textbf{Case 2:} \(\Gamma_1 - V(M_0)\) contains at least \(1\) vertex.
    Since \(\Gamma_1\) has an even number of vertices and \(M_0\) contains disjoint edges,
    \(\Gamma_1 - V(M_0)\) must have an even number of vertices.
    However, since no component of \(K_3 \cup K_1\) has an even number of vertices, we reach a contradiction.

    \textbf{Step 3:} There are exactly two components in \(\Gamma\).

    Let \(\Gamma_1, \Gamma_2, \ldots, \Gamma_k\) be the (non-empty) connected components of \(\Gamma\).
    Step 1 guarantees that \(k \ge 2\), so suppose that \(k \ge 3\).
    As shown in step 2, each component has an odd number of vertices. Since \(V(M)\) contains an even number of vertices, \(\Gamma - V(M)\) has at least \(3\) connected components
    each with an odd number of vertices
    for any \(m\)-matching \(M\). 
    However, since \(K_3 \cup K_1\) has
    exactly \(2\) connected components, we reach a contradiction.

    \textbf{Step 4:} Each component of \(\Gamma\) is complete.

    Let \(\Gamma_1\) and \(\Gamma_2\) be the two connected components of \(\Gamma\)
    and let \(p = \abs{V(\Gamma_1)}\) and \(q = \abs{V(\Gamma_2)}\).
    Without loss of generality, suppose that \(p \ge q\).
    We consider two cases for the value of \(q\).

    \textbf{Case 1:} \(q = 1\).
    In this case \(\Gamma_2 \cong K_1\).
    So, any \(m\)-matching \(M\) in \(\Gamma\) must consist of edges taken solely from \(\Gamma_1\).
    Hence \(\Gamma - \Gamma_2 - V(M) \cong K_3\) for any \(m\)-matching \(M\).
    Lemma \ref{lem:K_r-is-special} then guarantees that \(\Gamma - \Gamma_2\) is complete.
    Since \(\Gamma_1\) and \(\Gamma_2\) are disjoint, \(\Gamma_1 - \Gamma_2 = \Gamma_1\). So, \(\Gamma_1\) is complete i.e.\ \(\Gamma \cong K_p \sqcup K_1\).

    \textbf{Case 2:} \(q \ge 3\).

    Let \(M_0\) be an \(m\)-matching in \(\Gamma\).
    In the same fashion as Step 1, we find that the component of \(\Gamma\) containing the \(K_3\) component of \(\Gamma - V(M_0)\) cannot also contain the \(K_1\) component of \(\Gamma - V(M_0)\).
    Let \(\Gamma_i\) be the connected component of \(\Gamma\)
    containing the \(K_3\) component of \(\Gamma - V(M_0)\). 
    We claim that the \(K_1\) component of \(\Gamma - V(M_0)\) cannot belong to \(\Gamma_i\). To see this, let \(\{a,b,c\}\) be the vertices of the \(K_3\) component of \(\Gamma - V(M_0)\) and \(d\) be the isolated vertex of the \(K_1\) component of \(\Gamma - V(M_0)\).
    Since \(\Gamma_i\) is connected, there exists a path between \(a\) and \(d\).
    The same sequence of \(m\)-matchings described in step 1 results in an \(m\)-matching \(M^*\) such that \(\Gamma - V(M^*)\) is connected, contradicting that we should have \(\Gamma - V(M^*) \cong K_3 \sqcup K_1\).
    Hence, the \(K_3\) component of \(\Gamma - V(M_0)\) belongs to \(\Gamma_i\), and the \(K_1\) component of \(\Gamma - V(M_0)\) must belong to the other component \(\Gamma_j\) of \(\Gamma\).

    Let \(v_1\) and \(v_2\) be two distinct vertices in \(\Gamma_i\). We will show that \(v_1\) is necessarily adjacent to \(v_2\).
    If \(v_1\) and \(v_2\) both belong to the \(K_3\) component in \(\Gamma - V(M_0)\), then immediately \(v_1\) and \(v_2\) are adjacent. So, suppose \(v_1\) does not belong to this \(K_3\) component, meaning that \(v_1\) must belong to \(V(M_0)\). Let \(w_1\) be the vertex such that \( v_1 w_1 \in M_0\), and
    let \(u_1 \neq v_2\) and \(u_2 \neq v_2\) be two vertices in the \(K_3\) component of \(\Gamma - V(M_0)\).

    \begin{figure}[h!]
        \centering
        \begin{tikzpicture}
            \begin{scope}[shift={(-5,0)}]
                \node [mynode] (v1) at (120:1) {\(v_1\)};
                \node [mynode] (w1) at (240:1) {\(w_1\)};
                \draw (v1) -- (w1);
            \end{scope}
            \begin{scope}[shift={(-3,0)}]
                \node [mynode] (v2) at (120:1) {\(v_2\)};
                \node [mynode] (w2) at (240:1) {\(w_2\)};
                \draw (v2) -- (w2);
            \end{scope}
            \begin{scope}[shift={(-1,0)}]
                \node [mynode] (u1) at (120:1) {\(u_1\)};
                \node [mynode] (u2) at (240:1) {\(u_2\)};
                \node [mynode] (u3) at (0:1) {};
                \draw (u1) -- (u2);
                \draw (u2) -- (u3);
                \draw (u1) -- (u3);
            \end{scope}
        \end{tikzpicture}
        \caption{\(\Gamma_i - V(M_0)\), \(v_1 w_1\), and \(v_2 w_2\) when \(v_2 \in V(M_0)\)}
        \label{fig:lem:K_3-cup-k_1-is-special:4}
    \end{figure}
    
    We claim that there is a \(m\)-matching \(M_1\) in \(\Gamma\) where \(v_2\) belongs to the \(K_3\) component of \(\Gamma - V(M_1)\).
    To see this, note that if \(v_2\) already belongs to the \(K_3\) component of \(\Gamma - V(M_0)\), then let \(M_1 = M_0\). Otherwise, assuming
    that \(v_2\) is not in the \(K_3\) component of \(\Gamma - V(M_0)\) (see Figure \ref{fig:lem:K_3-cup-k_1-is-special:4}), since \(v_2\) is a vertex in \(\Gamma_i\), the vertex \(v_2\) must belong to \(V(M_0)\). Let \(w_2\) be the vertex where \(v_2 w_2 \in M_0\). If \(w_2 = v_1\) then \(v_1\) is adjacent to \(v_2\) and we are done. So, assume \(w_2 \neq v_1\).
    Let \(M_1\) be the \(m\)-matching formed by swapping the edge \(v_2 w_2\) for the edge \(u_1 u_2\).

    Now, let \(t_1\) and \(t_2\) be the two vertices in the \(K_3\) component of \(\Gamma - V(M_1)\) distinct from \(v_2\) (see Figure \ref{fig:lem:K_3-cup-k_1-is-special:5}).
    Let \(M_3\) be the \(m\)-matching formed by swapping the edge \(v_1 w_1\) for the edge \(t_1 t_2\). Then, \(\{v_1, w_1, v_2\}\) forms the \(K_3\) component of \(\Gamma - V(M_3)\). Hence, \(v_1\) is adjacent to \(v_2\).

    \begin{figure}[h!]
        \centering
        \begin{tikzpicture}
            \begin{scope}[shift={(-3,0)}]
                \node [mynode] (v1) at (120:1) {\(v_1\)};
                \node [mynode] (w1) at (240:1) {\(w_1\)};
                \draw (v1) -- (w1);
            \end{scope}

            \begin{scope}[shift={(-1,0)}]
                \node [mynode] (t1) at (120:1) {\(t_1\)};
                \node [mynode] (t2) at (240:1) {\(t_2\)};
                \node [mynode] (v2) at (0:1) {\(v_2\)};
                \draw (t1) -- (t2);
                \draw (t1) -- (v2);
                \draw (t2) -- (v2);
            \end{scope}
        \end{tikzpicture}
        \caption{\(\Gamma_i - V(M_1)\)}
        \label{fig:lem:K_3-cup-k_1-is-special:5}
    \end{figure}
    
    Finally, we show that \(\Gamma_j\) is complete. Since \(\Gamma_j\) is connected and \(p \ge q \ge 3\), \(\Gamma_j\) contains at least one edge \(e\).
    Since \(\Gamma_j\) contains the isolated vertex of \(\Gamma - V(M_0)\), one of the edges in \(\Gamma_j\) must belong to \(M_0\).
    Let \(M_4\) be an \(m\)-matching obtained by swapping \(e\) for some edge in the \(K_3\) component of \(\Gamma - V(M_0)\).
    Notice that the \(K_3\) component of \(\Gamma - V(M_2)\) belongs to \(\Gamma_j\) and the \(K_1\) component of \(\Gamma - V(M_2)\) belongs to \(\Gamma_i\).
    Replacing \(\Gamma_i\) with \(\Gamma_j\) and \(M_0\) with \(M_4\) in the previous three paragraphs shows that \(\Gamma_j\) must also be complete. Hence \(\Gamma \cong K_p \sqcup K_q\).
\end{proof}

    \section{Questions} 
\label{section:questions}
We end this paper with some open questions and brief investigations of them. 

%
% ----- Input subsections -----
%

\subsection{Other Pseudomanifolds?}
Theorem \ref{thm:m-pseudomanifolds} only classifies the closed \(m\)-pseudomanifolds without boundary.
\begin{question}
    If \(\Gamma\) is a simple graph without loops, when is \(\DConf_n(\Gamma)\) a closed \(m\)-pseudomanifold \textit{with} boundary?
\end{question}
One approach to this question is to try and extend the technique of Lemma \ref{lem:special-edges}
to account for boundary points.
A boundary point would correspond to a configuration in \(\Gamma - V(M)\) where only one particle can move. This gives a third possibility for the edges guaranteed by this lemma. Alternatively, one may be able to think about the known pseudomanifolds and see what happens when boundary is introduced by cutting out holes in the space.

Molly Ison's work in \cite{ison2005two} classifies when \(\DConf_2(\Gamma)\)
is a 2-pseudomanifold for both connected simple and connected non-simple graphs without loops.
We collect her results in Theorem \ref{thm:ison-pseudomanifolds} below.

\begin{thm}
    \label{thm:ison-pseudomanifolds}
    \cite{ison2005two}
    Let \(\Gamma\) be a connected graph without loops (potentially with multiple edges). Then \(\DConf_2(\Gamma)\) is a 2-pseudomanifold with or without boundary if and only if \(\Gamma\) is one of the following.
    \begin{enumerate}
        \item The cycle graph on \(n\) vertices when \(n \ge 4\).
        \item One of the two graphs shown in Figure \ref{fig:thm:ison-pseudomanifolds}. 
        \item A simple graph on \(5\) vertices where each vertex has degree at least \(2\)
        and no \(3\)-cycle contains a vertex with degree exactly \(2\).
        \item \(K_4\)
        \item \(K_{2,2}\) or \(K_4\) with each edge doubled.
    \end{enumerate}
\end{thm}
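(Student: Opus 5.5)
The statement is Ison's classification of connected graphs (loops forbidden, multi-edges allowed) for which \(\DConf_2(\Gamma)\) is a 2-pseudomanifold with or without boundary. I would prove it with the same matching translation as Lemma \ref{lem:special-edges}, relaxed to allow boundary.

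\textbf{Step 1: translate the condition.} For \(n=2,m=2\) a 1-cell is a particle moving on an edge \(e=uv\) while the other particle sits at a vertex \(x\notin\{u,v\}\). The 2-cells containing it correspond to edges of \(\Gamma-\{u,v\}\) incident to \(x\). So the pseudomanifold-with-boundary condition reads: for every edge \(e\) and every vertex \(x\) off \(e\), \(\deg_{\Gamma-V(e)}(x)\in\{1,2\}\), counted with multiplicity. The value \(1\) is allowed exactly at boundary 1-cells. I would also require pureness, namely that every vertex and edge lies in some square. This rules out isolated pieces like those in Figure \ref{fig:K2_and_DConf2}, and forces \(\Gamma\) to have a 2-matching, hence at least 4 vertices. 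I would state this version of Lemma \ref{lem:special-edges} first, with the proof copied almost verbatim.

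\textbf{Step 2: degree counting.} For each vertex \(x\) and each edge \(e\) not touching \(x\), let \(c(x,e)\) be the number of edges (with multiplicity) from \(x\) into \(V(e)\). The condition says \(1\le \deg(x)-c(x,e)\le 2\). Since \(c(x,e)\le 2\) in a simple graph (more with multi-edges), \(\deg(x)\le 4\) in the simple case. Moreover, if \(\deg(x)\ge 3\), every edge avoiding \(x\) must meet \(N(x)\) substantially, which forces \(\abs{V(\Gamma)}\) to be small. I would case on \(\abs{V(\Gamma)}\).

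\textbf{Step 2a: six or more vertices.} I expect to show that all degrees are at most 2, so \(\Gamma\) is a cycle \(C_n\). Check \(C_n\) directly: the neighbours of \(x\) lie on \(e\) at most when \(e\) is adjacent, giving degree 1 (boundary) or 2. This yields item (1), with \(n\ge4\) needed for pureness. Graphs with a vertex of degree \(\ge3\) on \(\ge6\) vertices should admit an edge far from \(x\) that leaves \(\deg(x)\ge3\).

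\textbf{Step 2b: exactly five vertices, simple.} Here \(\Gamma-V(e)\) is a triangle-sized set \(\{x,y,z\}\), so the count is automatically at most 2. Only the lower bound \(\ge1\) remains: for each edge \(e\), no vertex of the remaining three is isolated in \(\Gamma-V(e)\). I would show this is equivalent to minimum degree \(\ge2\) plus the condition that no 3-cycle contains a degree-2 vertex. If \(x\) has degree 2 with neighbours \(u,v\) adjacent, take \(e=uv\) and \(x\) is isolated. Conversely, a failure produces exactly such a configuration. This gives item (3).

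\textbf{Step 2c: exactly four vertices.} \(\Gamma-V(e)\) has two vertices \(x,y\), and the condition says the multiplicity of \(xy\) is 1 or 2 for every edge \(e\) whose complement pair is \(\{x,y\}\). Pureness needs a 2-matching. Enumerating multigraphs on 4 vertices in which complementary pairs are both edges or both non-edges, with multiplicities in \(\{1,2\}\), should give \(K_4\), \(C_4=K_{2,2}\) (already counted in item (1)), the doubled \(K_4\) and doubled \(K_{2,2}\) of item (5), and the two mixed-multiplicity graphs of Figure \ref{fig:thm:ison-pseudomanifolds}.

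\textbf{Step 2d: multigraphs on five or more vertices.} A doubled edge \(xy\) with another edge \(e\) avoiding both would need \(\deg_{\Gamma-V(e)}(x)\le 2\), which is very restrictive. I expect these cases to collapse back to cycles or to fail.

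\textbf{Main obstacle.} The hardest part is the exhaustive four-vertex multigraph enumeration and matching it precisely to Ison's figure, together with proving no exotic multigraph survives on five or more vertices. I would handle the first by listing complementary edge pairs \(\{e,e'\}\) in the three perfect matchings of \(K_4\) and enforcing the multiplicity constraints pairwise.
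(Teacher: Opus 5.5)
\textbf{Step 2a is false, and the error comes from misreading item (2).} The two graphs in Figure \ref{fig:thm:ison-pseudomanifolds} are \emph{simple} graphs on six vertices. The first is the \(2\times 3\) ladder: paths \(v_1v_2v_3\) and \(v_4v_5v_6\) plus the rungs \(v_1v_4, v_2v_5, v_3v_6\). The second is the \(6\)-cycle with two antipodal chords, which is \(K_{3,3}\) minus an edge. They are not four-vertex multigraphs, so Step 2c cannot produce them.

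Both graphs satisfy your Step 1 criterion and contain degree-\(3\) vertices. In the ladder, \(N(v_2)=\{v_1,v_3,v_5\}\), and each edge avoiding \(v_2\) (namely \(v_1v_4, v_4v_5, v_5v_6, v_3v_6\)) contains exactly one neighbour of \(v_2\). So \(\deg_{\Gamma-V(e)}(v_2)=2\) every time, while the degree-\(2\) vertices always get \(1\) or \(2\). The ``edge far from \(x\)'' you rely on therefore need not exist; \(K_{3,3}\) is another instance.

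For simple graphs, your criterion amounts to the following:
\begin{enumerate}
\item minimum degree at least \(2\);
\item no degree-\(2\) vertex lies on a triangle;
\item a vertex \(v\) of degree \(\ge 4\) forces \(V=N[v]\), hence five vertices;
\item a vertex \(x\) of degree \(3\) only forces \(W=V\setminus N[x]\) to be independent.
\end{enumerate}
The missing argument is the degree-\(3\) case. Once \(\abs{V}\ge 6\), every vertex has degree at most \(3\). Each \(w\in W\) then sends at least two edges into \(N(x)\), and each vertex of \(N(x)\) receives at most two, so \(\abs{V}\le 7\). At seven vertices the count is tight, and the \(W\)--\(N(x)\) edges form a \(6\)-cycle. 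A vertex \(a\in N(x)\) keeps degree \(3\) after deleting an edge at the vertex of \(W\) not adjacent to \(a\), so this case fails. At six vertices, a short case analysis gives exactly \(K_{3,3}\) minus a matching. Cycles are forced only from seven vertices on.

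\textbf{Step 2c will not come out as you predict either.} The paper only cites Ison here, so the comparison has to be with the statement itself. Under your criterion, a four-vertex multigraph passes exactly when its underlying simple graph is \(K_4\) or \(C_4\) and every multiplicity is \(1\) or \(2\). For example, \(K_4\) with a single doubled edge \(ab\) passes: the \(1\)-cells pairing \(cd\) with \(a\) or \(b\) lie in two squares, and all others lie in one. Yet this graph is not on the list.

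Likewise \(K_{3,3}\) passes your criterion; indeed \(\DConf_2(K_{3,3})\) is closed by Theorem \ref{thm:m-pseudomanifolds}. But it appears nowhere in the statement as transcribed here. So the plain condition of Step 1 yields strictly more graphs than the statement lists.

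Before attempting the ``only if'' direction, you need Ison's exact definition of a pseudomanifold with boundary and her exact list. Presumably the six-vertex entries should read ``\(K_{3,3}\) minus a matching.'' Do not expect your enumeration to land on the statement verbatim.
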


\begin{figure}[h!]
  \centering
  \begin{tikzpicture}
    \begin{scope}
      \node (v1) at (0,0) [dormant_graph_node] {};
      \node (v2) at (0,1.5) [dormant_graph_node] {};
      \node (v3) at (0,3) [dormant_graph_node] {};
      \node (v4) at (1.5,0) [dormant_graph_node] {};
      \node (v5) at (1.5,1.5) [dormant_graph_node] {};
      \node (v6) at (1.5,3) [dormant_graph_node] {};
      \draw (v1) to (v4);
      \draw (v1) to (v2);
      \draw (v2) to (v5);
      \draw (v2) to (v3);
      \draw (v3) to (v6);
      \draw (v4) to (v5);
      \draw (v5) to (v6);
    \end{scope}
    \begin{scope}[xshift=4cm]
      \node (v1) at (0,0) [dormant_graph_node] {};
      \node (v2) at (0,1.5) [dormant_graph_node] {};
      \node (v3) at (0,3) [dormant_graph_node] {};
      \node (v4) at (1.5,0) [dormant_graph_node] {};
      \node (v5) at (1.5,1.5) [dormant_graph_node] {};
      \node (v6) at (1.5,3) [dormant_graph_node] {};
      \draw (v1) to (v2);
      \draw (v1) to (v4);
      \draw (v2) to (v3);
      \draw (v3) to (v6);
      \draw (v4) to (v5);
      \draw (v5) to (v6);
      \draw (v1) to (v6);
      \draw (v3) to (v4);
    \end{scope}
  \end{tikzpicture}
  \caption{Two graphs where \(\DConf_2(\Gamma)\) is a 2-pseudomanifold with boundary}
  \label{fig:thm:ison-pseudomanifolds}
\end{figure}

Theorem \ref{thm:m-pseudomanifolds} only classifies the spaces admitted by \textit{simple} graphs.

\begin{question}
If \(\Gamma\) is a graph containing loops or multiple edges, when is \(\DConf_n(\Gamma)\) a closed \(m\)-pseudomanifold?
\end{question}

Extending the possibilities of Lemma \ref{lem:special-edges} is immediately fruitful
in the case of multiple edges.
Suppose that there are two vertices \(v\) and \(w\) in \(\Gamma - V(M)\) with multiple edges
between them such that
one of the \(n - (m - 1)\) particles placed on \(\Gamma - V(M)\) is placed on \(v\)
and is free to move to \(w\). 
Since there must be exactly two particle movements available
and at least two are already available from \(v\) to \(w\),
there must be exactly two edges between \(v\) and \(w\).
Furthermore, once this particle moves to \(w\), there cannot be any possible way for this particle to move other than along one of the two edges back to \(v\).

\begin{figure}[h!]
  \centering
    \begin{tikzpicture}
        \node (v) at (0,0) [circle, draw] {\(v\)};
        \node (w) at (2,0) [circle, draw] {\(w\)};
        \draw (v) to [out=45, in=135] (w);
        \draw (v) to [out=-45, in=-135] (w);
    \end{tikzpicture}
    \caption{\(\Gamma - V(M)\)}
    \label{fig:multiple-edges}
\end{figure}

Hence one component of \(\Gamma - V(M)\) is just the vertices \(v\) and \(w\)
with two edges between them (see Figure \ref{fig:multiple-edges}).
Furthermore, there cannot be any isolated vertices in \(\Gamma - V(M)\). To see this, suppose that there was an isolated vertex \(u\). If there is a particle placed at \(u\), then that particle could be moved to one of \(v\) or \(w\) leaving no way for any particle to move.
If there is no particle placed at \(u\), then the particle at \(v\) or \(w\) could be moved to \(u\), again leaving no way for any particle to move.

One can check that \(K_4\) and \(K_{2,2}\) with each edge doubled satisfy the condition that \(\Gamma - V(M)\) is just the graph in Figure \ref{fig:multiple-edges} when \(m = n = 2\).
These examples agree with Theorem \ref{thm:ison-pseudomanifolds}.
However, it is unknown exactly which non-simple graphs work for other
values of \(m\) and \(n\).

\subsection{Other Special Subgraphs?}
The graphs in Section \ref{section:special-subgraphs} have lots of symmetry.
If we do not require that \(\DConf_n(\Gamma)\) is an \(m\)-pseudomanifold,
then perhaps there are other graphs that we can reconstruct solely from knowing what the deleted vertex matching subgraphs are. We formalize the idea that a subgraph is special in this way in Definition \ref{defn:residual-sequence} below.

\begin{defn}
\label{defn:residual-sequence}
Given a graph \(\Lambda\),
we say a sequence of graphs \(\{\Gamma_m\}_{m=1}^{\infty}\) is \(\Lambda\)-residual if and only if for each positive integer \(m\) the following are true.
\begin{enumerate}
\item \(\Gamma_m\) contains at least one \(m\)-matching.
\item \(\Gamma_m - V(M) \cong \Lambda\) for every \(m\)-matching \(M\) in \(\Gamma_m\).
\end{enumerate}
If a \(\Lambda\)-residual sequence exists, we say \(\Lambda\) is a special subgraph.
\end{defn}

\begin{question}
    \label{qst:special-graphs}
    Which graphs are special subgraphs?
\end{question}

A trivial example of a \(\Lambda\)-residual sequence is \(\{mK_2\sqcup \Lambda\}_{m=1}^{\infty}\) when \(\Lambda\) has no edges. Translating the results from Section \ref{section:special-subgraphs}, Lemmas \ref{lem:K_r-is-special} and \ref{lem:K_p,q-is-special} guarantee that if \(\Lambda\) is \(K_r\) when \(r \ge 3\), or \(K_{p,q}\) when \(p + q \ge 4\), then only one \(\Lambda\)-residual sequence exists: \(\{K_{2m+r}\}_{m=1}^\infty\) or \(\{K_{p+m, q+m}\}_{m=1}^{\infty}\), respectively.
In the case when \(\Lambda = K_3\sqcup K_1\), Lemma \ref{lem:K_3-cup-K_1-is-special} guarantees each term in a \(\Lambda\)-residual sequence has the form of \(K_p\sqcup K_q\) where \(p\) and \(q\) are odd, and \(p + q = 2m + 4\).

The process of finding an \(m\)-matching \(M\) in \(\Gamma\) and constraining the structure of \(\Gamma - V(M)\) parallels other developments in matching theory.
For example, much of the literature builds on the idea of \(m\)-extendability. A connected graph \(\Gamma\) is \(m\)-extendable \cite{plummerNextendableGraphs1980} if it contains at least one \(m\)-matching, and for any \(m\)-matching \(M\) in \(\Gamma\), the subgraph \(\Gamma - V(M)\) contains a perfect matching. So, if \(\Lambda\) contains a perfect matching and \(\Gamma_m\) is connected, then \(\Gamma_m\) is \(m\)-extendable. See Plummer's survey \cite{plummerRecentProgressMatching2008} for important developments. However, many processes operate in an order opposite to ours where first \(\Gamma\) is constrained, then some matching is found in \(\Gamma\). 
For example, \(\Gamma\) is said to be \(\Lambda\)-removable if for any subgraph \(\Lambda' \cong \Lambda\) in \(\Gamma\), the subgraph \(\Gamma - V(\Lambda')\) contains a perfect matching.
In \cite{chenRemovableForcedSubgraphs2024} the authors look at a special case of removability and determine that a graph is \(mK_2\)-forced if and only if it is \(K_{2m +2}\) or \(K_{m+1,m+1}\). Generally, \(\Gamma\) is a \(\Lambda\)-forced graph if it is \(\Lambda\)-removable and the perfect matching in each vertex-deleted subgraph is unique. Unfortunately, being \(\Lambda\)-removable or \(\Lambda\)-forced is generally not sufficient or necessary for being a term in a \(\Lambda\)-residual sequence. However, the sequences \(\{K_{2m+2}\}_{m=1}^{\infty}\) and \(\{K_{m+1,m+1}\}_{m=1}^{\infty}\) both form \(K_2\)-residual sequences.

Uniqueness is a natural question. We already know that there are unique \(K_r\)-residual and \(K_{p,q}\)-residual sequences when \(r \ge 3\) and \(p + q \ge 4\).
An easy nonexample other than when \(\Lambda = K_3\sqcup K_1\) is if \(\Lambda\) has at least one vertex and no edges. In this case, \(\Lambda \cong nK_1\) for some \(n \ge 1\), and \(\{K_3 \sqcup (m-1)K_2 \sqcup (n-1)K_1\}_{m=1}^{\infty}\) is \(\Lambda\)-residual. Since \(\{mK_2\sqcup nK_1\}_{m=1}^{\infty}\) is also \(nK_1\)-residual, \(nK_1\) does not have a unique residual sequence when \(n \ge 1\).

\begin{question}
For which graphs \(\Lambda\) is there a unique \(\Lambda\)-residual sequence?
\end{question}

In the proof of Lemma \ref{lem:manifold-with-only-cycles}, we saw that sufficiently large cycle graphs are not special subgraphs. We extract this idea into Proposition \ref{prop:cycles-not-special} below.

\begin{prop}
    \label{prop:cycles-not-special}
    If \(r \ge 5\), then \(C_r\) is not a special subgraph.
\end{prop}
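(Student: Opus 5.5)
We argue by contradiction, following Step 2 in the proof of Lemma \ref{lem:manifold-with-only-cycles}, but with the pseudomanifold machinery replaced by the residual hypothesis alone. Suppose \(\{\Gamma_m\}_{m=1}^\infty\) is \(C_r\)-residual for some \(r \ge 5\). It suffices to derive a contradiction from the term \(\Gamma = \Gamma_1\), where matchings are single edges.

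Fix an edge \(M_0 = \{w_1 w_2\}\) of \(\Gamma\). Then \(\Gamma - \{w_1, w_2\}\) is an \(r\)-cycle. Label five consecutive vertices on it \(v_1, v_2, v_3, v_4, v_5\); these exist since \(r \ge 5\). Every vertex of \(\Gamma\) lies in \(\{w_1, w_2\}\) or on this cycle, so \(\abs{V(\Gamma)} = r+2\).

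\textbf{Step 1.} Swap to \(M' = \{v_2 v_3\}\). The residual hypothesis gives \(\Gamma - \{v_2, v_3\} \cong C_r\), so every vertex there has degree exactly \(2\). In \(\Gamma - \{v_2,v_3\}\), the remaining vertices of the original cycle form a path from \(v_4\) around to \(v_1\), and \(w_1 w_2\) is an edge.
\begin{itemize}
\item Interior vertices of that path already have degree \(2\) inside the path, so they have no further neighbours. In particular, neither \(w_1\) nor \(w_2\) is adjacent to an interior vertex.
\item \(v_1\) and \(v_4\) each need exactly one more neighbour.
\item \(w_1\) and \(w_2\) each need exactly one more neighbour besides each other.
\end{itemize}
Since \(\Gamma - \{v_2,v_3\}\) is a single cycle of length \(r\), the extra edges must close path and edge into one cycle. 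This forces, up to relabelling \(w_1, w_2\), that \(w_1 v_4\) and \(w_2 v_1\) are edges of \(\Gamma\).

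\textbf{Step 2.} Swap to \(M'' = \{v_1 v_2\}\). In \(\Gamma - \{v_1, v_2\}\), the vertex \(v_4\) is adjacent to \(v_3\), to \(v_5\), and to \(w_1\). These are three distinct vertices, since \(w_1\) is off the original cycle. So \(v_4\) has degree at least \(3\), contradicting \(\Gamma - \{v_1,v_2\} \cong C_r\).

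If \(w_1\) were instead attached to \(v_1\) (the relabelled case), the symmetric swap \(M'' = \{v_3 v_4\}\) makes \(v_1\) have degree \(3\) via \(v_5\)-side neighbour, \(v_2\), and \(w_1\). Hence no \(C_r\)-residual sequence exists.

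\textbf{Main obstacle.} The delicate point is Step 1: justifying that the extra edges must go exactly from \(\{w_1,w_2\}\) to \(\{v_1,v_4\}\). This requires two checks.
\begin{itemize}
\item The endpoint accounting above, including that interior path vertices admit no extra neighbours.
\item That \(v_1\) and \(v_4\) cannot instead be joined to each other. If they were, the path would close into a cycle of length \(r-2\), and the component \(\{w_1, w_2\}\) would be a separate piece, so the graph would not be \(C_r\).
\end{itemize}
Both cases are handled by the degree-\(2\) and connectivity constraints that \(C_r\) imposes. The case \(r = 5\) needs a glance, because there the path from \(v_4\) to \(v_1\) is just \(v_4 v_5 v_1\), but the argument is unchanged.
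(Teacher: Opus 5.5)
Your proof is correct and is essentially the paper's argument. The paper's figure runs the same swap-and-infer sequence on \(\Gamma_1\): swap the matching edge for a cycle edge, force the two ends of the broken path onto \(w_1, w_2\), then swap to an adjacent cycle edge to create a degree-\(3\) vertex, just as in Step 2 of Lemma \ref{lem:manifold-with-only-cycles}; you write this out explicitly for every \(r \ge 5\), and your closing ``relabelled case'' paragraph is unnecessary, since Step 1 already shows \(v_4\) gains a neighbour in \(\{w_1, w_2\}\) under either labelling.
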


\begin{proof}
    Suppose \(\{\Gamma_m\}_{m=1}^{\infty}\) is \(C_5\)-residual.
    In the same fashion as in Lemma \ref{lem:manifold-with-only-cycles}, Figure \ref{fig:prop:cycles-not-special:1} illustrates
    a sequence (top left, top right, bottom left, then bottom right) of ``swaps and inferences'' performed on \(\Gamma_1\) that contradict that \(\Gamma_1 - V(M)\) is \(C_5\) for any \(1\)-matching \(M\). The same argument works for larger values of \(r\).

    \begin{figure}[h!]
        \centering
        \begin{tikzpicture}
            % active and dormant node styles for vertices present/ not present after deleting the matching
            \tikzset{
                active/.style = {circle, draw=black, fill=black, inner sep = 1.5pt, text=white},
                dormant/.style = {circle, draw=black, fill=white, inner sep = 1.5pt, text=black}
            }
        
            % command for drawing the 7 vertices used in the argument.
            % Usage: \DrawVertices{<active nodes>}{<dormant nodes>}
            \newcommand{\DrawVertices}[2]{
                % extra edge
                \coordinate (c1) at (-2.176, -.588);
                \coordinate (c2) at (-2.176, .588);
                % cycle
                \foreach \i in {3,4,5,6,7} {
                    \coordinate (c\i) at ({(\i - 3) * 72 - 36}:1);
                }
        
                \foreach \i in {#1} {
                    \node (n\i) at (c\i) [active] {};
                }
                \foreach \i in {#2} {
                    \node (n\i) at (c\i) [dormant] {};
                }
            }
            \begin{scope}[shift={(0,0)}]
                \DrawVertices{3,4,5,6,7}{1,2}
                \draw[dashed] (n1) -- (n2);
                \draw (n3) -- (n4) -- (n5) -- (n6) -- (n7) -- (n3);
            \end{scope}
            \begin{scope}[shift={(5.25,0)}]
                \DrawVertices{1,2,5,6,7}{3,4}
                \draw (n1) -- (n2);
                \draw (n5) -- (n6) -- (n7);
                \draw [dashed] (n7) -- (n3) -- (n4) -- (n5);
            \end{scope}
            \begin{scope}[shift={(0,-3.25)}]
                \DrawVertices{1,2,5,6,7}{3,4}
                \draw (n1) -- (n2);
                \draw (n5) -- (n6) -- (n7);
                \draw [dashed] (n7) -- (n3) -- (n4) -- (n5);
                \draw (n1) -- (n7);
                \draw (n2) -- (n5);
            \end{scope}

            \begin{scope}[shift={(5.25,-3.25)}]
                \DrawVertices{1,2,3,6,7}{4,5}
                \draw [dashed] (n2) -- (n5) -- (n4) -- (n3);
                \draw [dashed] (n5) -- (n6);
                \draw (n2) -- (n1) -- (n7) -- (n3);
                \draw (n6) -- (n7);
            \end{scope}
        \end{tikzpicture}
        \caption{\(C_5\) is not a special subgraph}
        \label{fig:prop:cycles-not-special:1}
    \end{figure}

\end{proof}

We end with an example similar to Lemma \ref{lem:K_3-cup-K_1-is-special}. Observe that deleting a sufficiently large matching from \(\sqcup_{i=1}^{n+1}K_{p_i}\) when each \(p_i\) is odd always results in a graph isomorphic to \(K_3\sqcup n K_1\). We express this construction precisely in Example \ref{exmp:K_3cupnK_1-is-special} below.
\begin{exmp}
\label{exmp:K_3cupnK_1-is-special}
Let \(n\) be a nonnegative integer. 
The sequence \(\{\Gamma_m\}_{m=1}^{\infty}\) is \((K_3 \sqcup nK_1)\)-residual if
for each positive integer \(m\), we have \(\Gamma_m = \bigsqcup_{i=1}^{n+1} K_{p_i}\) where every \(p_i\) is odd and \(\sum_{i=1}^{n+1} p_i = 2(m + n) + 2\).
\end{exmp}
A similar argument to the one used in Lemma \ref{lem:K_3-cup-K_1-is-special} can potentially be used to show the converse.
\begin{conj}
If \(\{\Gamma_m\}_{m=1}^{\infty}\) is \((K_3 \sqcup nK_1)\)-residual, then each \(\Gamma_m\) is of the form described in Example \ref{exmp:K_3cupnK_1-is-special}.
\end{conj}

    \printbibliography

@misc{appiah2024algebraicstructurehyperbolicgraph,
      title={The algebraic structure of hyperbolic graph braid groups}, 
      author={B. Appiah and P. Dani and W. Ge and C. Hudson and S. Jain and M. Lemoine and J. Murphy and J. Murray and A. Pandikkadan and K. Schreve and H. Vo},
      year={2024},
      eprint={2403.08623},
      archivePrefix={arXiv},
      primaryClass={math.GR},
      url={https://arxiv.org/abs/2403.08623}, 
}

@phdthesis{abrams2000configurationspaces,
  author={Abrams, Aaron D.},
  year={2000},
  title={Configuration spaces and braid groups of graphs},
  journal={ProQuest Dissertations and Theses},
  pages={67},
  isbn={978-0-599-85818-3},
  language={English},
  url={https://www.proquest.com/dissertations-theses/configuration-spaces-braid-groups-graphs/docview/304583880/se-2},
  school={University of California, Berkeley}
}

@mastersthesis{ison2005two,
  title={Two Aspects of Topology in Graph Configuration Spaces},
  author={Ison, Molly Elizabeth},
  year={2005},
  school={Virginia Tech},
  url={http://hdl.handle.net/10919/29214}
}

@article{ko2012characteristics,
  title = {Characteristics of Graph Braid Groups},
  author = {Ko, Ki Hyoung and Park, Hyo Won},
  date = {2011-01-13},
  eprint = {1101.2648},
  eprinttype = {arXiv},
  eprintclass = {math},
  doi = {10.48550/arXiv.1101.2648},
  url = {http://arxiv.org/abs/1101.2648},
  pubstate = {prepublished},
}

@article{Maciazek2019,
  author    = {Maciazek, Tomasz and Sawicki, Adam},
  title     = {Non-abelian Quantum Statistics on Graphs},
  journal   = {Communications in Mathematical Physics},
  year      = {2019},
  volume    = {371},
  number    = {3},
  pages     = {921--973},
  doi       = {10.1007/s00220-019-03583-5}
}

@article{PRUE2014136,
title = {Abrams's stable equivalence for graph braid groups},
journal = {Topology and its Applications},
volume = {178},
pages = {136-145},
year = {2014},
issn = {0166-8641},
doi = {https://doi.org/10.1016/j.topol.2014.09.009},
url = {https://www.sciencedirect.com/science/article/pii/S0166864114003836},
author = {Paul Prue and Travis Scrimshaw}
}

@article{GHRIST2007302,
title = {The geometry and topology of reconfiguration},
journal = {Advances in Applied Mathematics},
volume = {38},
number = {3},
pages = {302-323},
year = {2007},
issn = {0196-8858},
doi = {10.1016/j.aam.2005.08.009},
url = {https://www.sciencedirect.com/science/article/pii/S0196885806001175},
author = {Robert Ghrist and V. Peterson},
}

@online{birmanBraidsSurvey2005,
  title = {Braids: {{A Survey}}},
  shorttitle = {Braids},
  author = {Birman, Joan S. and Brendle, Tara E.},
  date = {2005-02-26},
  eprint = {math/0409205},
  eprinttype = {arXiv},
  doi = {10.48550/arXiv.math/0409205},
  url = {http://arxiv.org/abs/math/0409205},
  pubstate = {prepublished},
}

@book{hatcherAlgebraicTopology2001,
  title = {Algebraic {{Topology}}},
  author = {Hatcher, Allen},
  date = {2001},
  publisher = {Cambridge University Press},
  url = {https://pi.math.cornell.edu/~hatcher/AT/ATpage.html},
  isbn = {0-521-79540-0},
}

@article{jainManifoldModelsHyperbolic2026,
  title = {Manifold Models for Hyperbolic Graph Braid Groups on Three Strands},
  author = {Jain, Saumya and Vo, Huong},
  date = {2026-03-08},
  eprint = {2603.07807},
  eprinttype = {arXiv},
  eprintclass = {math},
  doi = {10.48550/arXiv.2603.07807},
  url = {http://arxiv.org/abs/2603.07807},
  pubstate = {prepublished},
  version = {1},
}

@article{ghristSafeCooperativeRobot2002,
  title = {Safe {{Cooperative Robot Dynamics}} on {{Graphs}}},
  author = {Ghrist, Robert W. and Koditschek, Daniel E.},
  date = {2002-01},
  journaltitle = {SIAM Journal on Control and Optimization},
  shortjournal = {SIAM J. Control Optim.},
  volume = {40},
  number = {5},
  pages = {1556--1575},
  issn = {0363-0129, 1095-7138},
  doi = {10.1137/S0363012900368442},
  url = {http://epubs.siam.org/doi/10.1137/S0363012900368442},
  langid = {english},
}

@thesis{fernandesTopologyGraphConfiguration,
  title = {Topology of {{Graph Configuration Spaces}}},
  author = {Fernandes, Praphat},
  date = {2003},
  school = {Virginia Tech},
  langid = {english},
}

@article{plummerNextendableGraphs1980,
  title = {On N-Extendable Graphs},
  author = {Plummer, M. D.},
  date = {1980-01-01},
  journaltitle = {Discrete Mathematics},
  shortjournal = {Discrete Mathematics},
  volume = {31},
  number = {2},
  pages = {201--210},
  issn = {0012-365X},
  doi = {10.1016/0012-365X(80)90037-0},
  url = {https://www.sciencedirect.com/science/article/pii/0012365X80900370},
}

@article{chenRemovableForcedSubgraphs2024,
  title = {Removable and Forced Subgraphs of Graphs},
  author = {Chen, Wuxian and Zhang, Heping},
  date = {2024-07-15},
  journaltitle = {Discrete Applied Mathematics},
  shortjournal = {Discrete Applied Mathematics},
  volume = {351},
  pages = {23--35},
  issn = {0166-218X},
  doi = {10.1016/j.dam.2024.03.004},
  url = {https://www.sciencedirect.com/science/article/pii/S0166218X24001021},
  urldate = {2026-06-10},
}

@article{plummerRecentProgressMatching2008,
  title = {Recent {{Progress}} in {{Matching Extension}}},
  booktitle = {Building {{Bridges}}: {{Between Mathematics}} and {{Computer Science}}},
  author = {Plummer, Michael D.},
  editor = {Grötschel, Martin and Katona, Gyula O. H. and Sági, Gábor},
  date = {2008},
  pages = {427--454},
  publisher = {Springer},
  location = {Berlin, Heidelberg},
  doi = {10.1007/978-3-540-85221-6_14},
  url = {https://doi.org/10.1007/978-3-540-85221-6_14},
}
\end{document}